\documentclass[12pt,reqno]{amsart}
 \usepackage{amsmath, amsthm, amsfonts, amssymb, color}
\usepackage{amsfonts, amsmath}
\usepackage[colorlinks=true,
            linkcolor=blue,
            citecolor=blue,
            urlcolor=blue]{hyperref}
 \usepackage{enumitem}
\usepackage{mathrsfs} 
\newcommand{\ID}{{\mathcal G}}
\newcommand{\ir}{\lim_{a_2\to0\atop b_2\to\infty}\lim_{R_2\to\infty}\lim_{a_1\to0\atop b_1\to\infty}\lim_{R_1\to\infty}\int_{a_2}^{b_2}\int_{\IR_{2}}\int_{a_1}^{b_1}\int_{\IR_{1}}}
\newcommand{\IR}{\mathcal{G}}
\newcommand{\II}{\mathcal{G}_2\times\mathbb{R}_+}
\newcommand{\III}{\mathcal{G}_1\times\mathbb{R}_+}

\newcommand{\IT}{{\mathcal G}_{1}\times {\mathcal G}_{2}}
\newcommand{\n}{\widetilde{\nabla}}
\newcommand{\N}{\widetilde{\Delta}}
\newcommand{\B}{\mathscr{B}}

\newcommand{\X}{\mathcal X
 }
 \newcommand{\p}{\psi\left(\frac{h}{R}\right)}

\begin{document}

\newtheorem{theorem}{Theorem}[section]
\newtheorem{prop}[theorem]{Proposition}
\newtheorem{lemma}[theorem]{Lemma}
\newtheorem{definition}[theorem]{Definition}
\newtheorem{corollary}[theorem]{Corollary}
\newtheorem{example}[theorem]{Example}
\newtheorem{remark}[theorem]{Remark}
\newcommand{\ra}{\rightarrow}
\renewcommand{\theequation}
{\thesection.\arabic{equation}}
\newcommand{\ccc}{{\cal C}}

\allowdisplaybreaks

\def\HAL { H^1_{{at}, L_1, L_2, M}(\mathcal{G}_1\times\mathcal{G}_2)}
 \def\HSL { H^1_{\L_1, \L_2, S}(\IT) }
\def\HL  {   H^1_{\L_1, \L_2}(\IT) }
\def\HLD  {  H^{\rm 1}_{L} \cap L^2   }
\def\RR{\mathbb R}
\def\Rn{{\mathbb R}^n}
\def\Rm{{\mathbb R}^m}
\def\S{\mathcal{S}_{\L_1,\L_2}}
\def\SP{\mathcal{S}_{P,\L_1,\L_2}}
\def\G{{\mathcal G}_{1}}
\def\H{{\mathbb G}_{2}}
\def\l{\ell}
\def\g{{\bf g}}
\def\h{{\bf h}}
\def\ud{\,{\rm d}}
\def\t{{\rm{d\bf t}}}
\def\L{\mathcal{L}}
\def\R{\mathcal{R}}
\def\Sn{\mathcal{S}_{\n}}
\def\Snp{\mathcal{S}_{\n, \L_1,\L_2}}
\def\Ss{\mathcal{S}_{P,\L}}
\pdfstringdefDisableCommands{%
  \def\S{S}%
  \def\L{L}%
  \def\R{R}%
}

 \medskip

\title  [Fefferman--Stein type inequalities and applications]
{Fefferman--Stein type inequalities via area and maximal functions for Schr\"odinger operators with applications}

\author{Ji Li, \ Wanjun Li,\  Liang Song  and \   Lixin Yan}
\address{
Ji Li, School of Mathematical and Physical Sciences, Macquarie University, NSW 2109, Australia}
\email{
ji.li@mq.edu.au}
\address{
Wanjun Li, Department of Mathematics, Sun Yat-sen (Zhongshan) University, Guangzhou, 510275, P.R. China}
\email{
 liwj323@mail2.sysu.edu.cn}
\address{
Liang Song, Department of Mathematics, Sun Yat-sen (Zhongshan) University, Guangzhou, 510275, P.R. China}
\email{
songl@mail.sysu.edu.cn
}
\address{
Lixin Yan, Department of Mathematics, Sun Yat-sen (Zhongshan) University, Guangzhou, 510275, P.R. China}
\email{
mcsylx@mail.sysu.edu.cn
}

\subjclass[2020]{42B20, 42B25, 42B30}

\keywords{Singular integrals,  product space, stratified Lie groups,
 atomic decomposition, spectral multiplier theorem}

\date{}

\medskip

\begin{abstract}
In this paper, we establish a Fefferman--Stein inequality in terms of area function and non-tangential maximal function associated with the Schr\"odinger operator
$
\L = -\Delta + V$
on stratified Lie groups $\mathcal G$, where $\Delta$ denotes the sub-Laplacian on $\mathcal G$ and $V$ is a nonnegative locally integrable function. As an application, we extend this inequality to the tensor product $\mathcal G_1 \times \mathcal G_2$ of two stratified Lie groups and develop atomic decompositions associated with the Schr\"odinger operator for functions in the Orlicz space
$
L\log^{+}L(\mathcal G_1 \times \mathcal G_2).
$
Using these atomic decompositions, we further prove weak-type endpoint estimates for the area integral operator and the Riesz transforms associated with the Schr\"odinger operator on
$
L\log^{+}L(\mathcal G_1 \times \mathcal G_2),
$
thereby extending the celebrated result of R.\,Fefferman and E.M.\,Stein \cite{FSt1982} to the setting of singular integrals with non-smooth kernels. 
\end{abstract}

\maketitle


\section{Introduction and main results}
\setcounter{equation}{0}

Consider a harmonic function  $u(x, t)$  on the upper half-space $\mathbb R^{n}\times(0,\infty)$.
The non-tangential maximal function 
$$
u^*(x)= \sup_{|x-y|<t}|u(y,t)|
$$ 
and the area integral 
$$S(u)(x)=\left(\int_{|x-y|<t}|\nabla u(y,t)|^2 t^{1-n}\,\ud y\ud t\right)^{1/2}
$$
are two fundamental tools in the theory of singular integrals and the related function spaces. A celebrated result of C. Fefferman and E.M. Stein (\cite[Theorem 8]{FS1972}) states that, under the condition $u(x,t)\to0$ as $t\to\infty$, 
$$\|u^*\|_{L^p(\mathbb R^n)}\approx\|S(u)\|_{L^p(\mathbb R^n)},  \quad \quad 0<p\leq 1. 
$$
 (The case $1<p<\infty$  was proved earlier in \cite{Estein1958}).
The core of their proof relies on the following distributional inequality  (\cite[(7.2)]{FS1972}): for all $\lambda>0$,
\begin{align}\label{good lambda FS}
|\{x\in\mathbb R^n: S(u)(x)>\lambda\}|
 \lesssim|\{x\in\mathbb R^n: u^*(x)>\lambda\}|
+\cfrac{1}{\lambda^2}\int_0^\lambda r|\{x\in\mathbb R^n: u^*(x)>r\}|\ud r
\end{align}
and the corresponding inequality with the roles of $u^*$ and $S(u)$ interchanged (here  the implicit constant is independent of $\lambda$). This Fefferman--Stein inequality \eqref{good lambda FS} plays an important role in harmonic analysis and partial differential equations.

A significant recent breakthrough was achieved by Hofmann, Kenig, Mayboroda, and Pipher in \cite{HKMP2015}. They established a Fefferman--Stein type inequality \eqref{good lambda FS}, enabling norm comparisons between the area function and the non-tangential maximal function associated with divergence-form elliptic operators $\L$ having real, $t$-independent, and non-symmetric coefficients. As a consequence, they solved the $L^p$ Dirichlet problem for such operators \cite[Theorem 1.23]{HKMP2015}.

Inspired by the aforementioned results, we investigate analogous Fefferman--Stein type inequalities \eqref{good lambda FS} for the area function and the non-tangential maximal function associated with Schr\"odinger operators $\L = -\Delta + V$ on a stratified Lie group $\IR$ with homogeneous dimension $Q$ and a homogeneous metric $\rho$. Here $\Delta$ is the standard sub-Laplacian on $\IR$ and $V$ is a nonnegative locally integrable function. The relevant notation and settings are described in Sections \ref{Sec2.1} and \ref{Sec2.2}; for the general framework of stratified Lie groups, we refer to Folland and Stein \cite{FoSt}.

Let $f\in C_0^\infty(\IR)$, the space of infinitely differentiable functions with compact support on $\IR$. 
The area function and the non-tangential maximal function associated with the Schrödinger operator $\L$ are defined  as follows. For $g\in \IR$ and $\beta>0$, set
\[
\Ss (f)(g) = \left( \iint_{\Gamma(g)} \left|  t\sqrt{\L} e^{-t\sqrt{\L}}   \left(f\right)(h) \right|^2 \frac{\ud h \ud t}{t^{Q+1}} \right)^{1/2}
\]
and
\[
N^\beta_\L(f)(g) = \sup_{(h,t)\in \Gamma^\beta(g) } |e^{-t\sqrt{\L}}f(h)|, 
\]
where $\Gamma^\beta(g) := \{(h,t)\in\IR\times(0,\infty): \rho(g,h)<\beta t\}$ is the cone of aperture $\beta$ with vertex $g$.

The first main result of this paper is as follows.
\begin{theorem}\label{single fefferman-Stein}
 There exist $C>0$ and $\beta>1$ such that for all $f\in C_0^\infty(\IR)$ and  $\lambda>0$, there holds
\begin{equation}\label{main 1.2}
\begin{aligned}
&\left|\{g\in\IR:\,\Ss(f)(g)>\lambda\}\right|\\
&\qquad\leq C  \left|\{g\in\IR:\,N^\beta_\L(f)(g)>\lambda\}\right|+
\frac{C}{\lambda^2}\int_{\{g\in\IR:\,N^\beta_\L(f)(g)\leq \lambda\}}\left|N^\beta_\L(f)(g)\right|^2\ud g.
\end{aligned}
\end{equation}

\end{theorem}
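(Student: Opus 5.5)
We follow the classical Fefferman--Stein scheme from \cite{FS1972}, as adapted in \cite{HKMP2015}. The idea is to localize to the ``good'' set where the non-tangential maximal function is controlled and then integrate by parts in the region above that set. Write $u(h,t)=e^{-t\sqrt{\L}}f(h)$. Then $u$ solves $\partial_t^2u-\L u=0$, that is, $\partial_t^2 u+\Delta u = Vu$ on $\IR\times(0,\infty)$, and $t\sqrt{\L}e^{-t\sqrt{\L}}f=-t\partial_t u$.

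\textbf{Step 1 (reduction).} Set $E=\{g: N^\beta_\L(f)(g)>\lambda\}$. For a suitable $\gamma\in(0,1)$, let $E^*=\{g: M(\chi_E)(g)>\gamma\}$, where $M$ is the Hardy--Littlewood maximal operator on the homogeneous space $(\IR,\rho,\ud g)$. Then $|E^*|\lesssim|E|$, so it suffices to bound $|\{g\in F: \Ss(f)(g)>\lambda\}|$, where $F=\IR\setminus E^*$. By Chebyshev's inequality this is at most $\lambda^{-2}\int_F \Ss(f)^2\,\ud g$. Fubini bounds the latter by
\[
\lambda^{-2}\iint_{\mathcal R} |t\partial_t u(h,t)|^2\,\frac{\ud h\,\ud t}{t},
\]
where $\mathcal R=\bigcup_{g\in F}\Gamma(g)$ is the sawtooth region over $F$. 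Since $t^{-Q}|\{g\in F:\rho(g,h)<t\}|\lesssim1$, no loss occurs here. By the choice of $\gamma$ and the doubling property, every point of the enlarged region $\widetilde{\mathcal R}=\bigcup_{g\in F}\Gamma^{\beta'}(g)$ lies in a cone $\Gamma^\beta(g')$ with vertex $g'\notin E$, once $\beta>\beta'>1$. Hence $|u|\le\lambda$ on $\widetilde{\mathcal R}$, and moreover $|u(h,t)|\le N_\L^\beta f(g')$ for such vertices $g'$.

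\textbf{Step 2 (integration by parts).} To justify the integration, truncate: take $\varepsilon<t<1/\varepsilon$, and insert a smooth cutoff $\eta$ adapted to $\widetilde{\mathcal R}$, with $|\nabla\eta|+|\partial_t\eta|\lesssim 1/t$ supported in $\widetilde{\mathcal R}\setminus\mathcal R$. We want to show
\[
\iint |\partial_t u|^2\eta^2\, t\,\ud h\,\ud t\lesssim \iint_{\widetilde{\mathcal R}}\big(|\partial_t u|^2+|\n u|^2+V|u|^2\big)\eta^2 t\,\ud h\,\ud t \lesssim \int_{F'}\min(N^\beta_\L f,\lambda)^2\,\ud g ,
\]
where $F'=\IR\setminus E$. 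The first inequality is trivial. For the second, use the identity
\[
\partial_t^2(u^2)+\Delta(u^2)=2(|\partial_tu|^2+|\n u|^2+V u^2).
\]
Here positivity of $V$ is crucial: the potential term enters with a good sign. Multiply this identity by $t\eta^2$ and integrate. Integrating by parts in $t$ and in the horizontal directions produces three kinds of terms. Boundary terms at $t=\varepsilon$ and $t=1/\varepsilon$ are controlled by $\int u^2$ over slices. Terms where derivatives fall on $\eta$ are absorbed by Cauchy--Schwarz, leaving $\iint_{\mathrm{supp}\nabla\eta}u^2\,\ud h\,\ud t/t$. A term $\int u^2\partial_t(\ldots)$ is similar. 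Each of these is bounded by integrating $|u|^2\le \min(N^\beta_\L f,\lambda)^2$ over the boundary of the sawtooth projected onto $F'$, using the fact that the region $\mathrm{supp}\nabla\eta$ has bounded overlap at each scale. Letting $\varepsilon\to0$ requires decay: $u(\cdot,t)\to0$ as $t\to\infty$ in $L^2$, and the bound at $t\to0$ is uniform. Both hold since $f\in C_0^\infty$ and $e^{-t\sqrt{\L}}$ is a contraction satisfying Gaussian/Poisson bounds with $V\ge0$ (by domination by the free semigroup).

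\textbf{Main obstacle.} The delicate point is the cutoff construction on the Lie group. The sub-Laplacian involves only horizontal vector fields $X_j$, so the integration by parts must use $\Delta=\sum X_j^2$ with the left-invariant Haar measure, where the $X_j$ are divergence free. The cutoff $\eta$ must be built from $\rho$-distance to $E^*$, which is only Lipschitz with respect to the Carnot--Carath\'eodory metric; the quasi-metric $\rho$ has to be compared with it. We expect to use a Whitney-type decomposition of $E^*$, giving $\eta$ with $|X_j\eta|\lesssim 1/t$ on $\widetilde{\mathcal R}\setminus \mathcal R$. A further requirement is to verify that the slab $\{(h,t): t\sim \mathrm{dist}(h,F)\}$ projects onto $F'$ with bounded multiplicity, so that the error terms sum to $\int_{F'}\min(N_\L^\beta f,\lambda)^2$. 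This sum is $\le\int_{\{N\le\lambda\}}|N_\L^\beta f|^2+\lambda^2|E|$. Combined with Step 1, this yields \eqref{main 1.2}.
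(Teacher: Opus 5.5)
Your outline is correct. It differs from the paper's proof in the one place that matters, the choice of cutoff.

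\textbf{Shared skeleton.} Both arguments reduce to the good set ($A_\lambda$ in the paper, your $F$). Both use the weak identity $\widetilde\Delta(u^2)=2|\widetilde\nabla u|^2+2Vu^2$ with the $V$-term dropped, absorb by Cauchy--Schwarz, and pick up the $t\to0$ boundary term $\int|f|^2\phi^2(\chi_{E_\lambda})$.

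\textbf{Your cutoff.} You take the classical Fefferman--Stein geometric cutoff adapted to the sawtooth over $F$. You control its error terms by $|u|\le\lambda$ on $\widetilde{\mathcal R}$ together with the measure of the transition slab.

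\textbf{The paper's cutoff.} The paper uses $\phi\bigl(e^{-t\sqrt{-\Delta}}\chi_{E_\lambda}(h)\bigr)$. It is $\ge 9/10$ on $W$ by the maximal-function bound and vanishes off $\widetilde{W^\beta}$ by the Poisson tail. Its argument is $\widetilde\Delta$-harmonic, and all cutoff errors are bounded by $\lambda^2\iint|t\widetilde\nabla e^{-t\sqrt{-\Delta}}\chi_{E_\lambda^c}|^2\,\frac{dh\,dt}{t}\lesssim\lambda^2|E_\lambda^c|$, using only the $L^2$ Littlewood--Paley estimate.

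\textbf{What each buys.} The paper trades geometry for a square-function bound. That is what lets the same argument run in the two-parameter setting of Theorem~\ref{prod F-S thm}, where sawtooth regions over arbitrary open sets admit no distance function or Whitney slab. Your route is more elementary but is intrinsically one-parameter.

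\textbf{Your ``main obstacle'' is milder than you expect.} Each $X_j\rho$ is homogeneous of degree $0$ and hence bounded. So $h\mapsto\rho(h,F)$ is uniformly Lipschitz along horizontal lines. Take $\theta$ smooth with $\theta=1$ on $[0,1]$ and $\theta=0$ on $[b,\infty)$, and set $\eta(h,t)=\theta(\rho(h,F)/t)$. Then $\eta=1$ on $\mathcal R$, and $\eta$ is Lipschitz with $|\nabla\eta|+|\partial_t\eta|\lesssim 1/t$. This suffices because the identity is only used in weak form. No Whitney decomposition or Carnot--Carath\'eodory comparison is needed.

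\textbf{A correction to your bookkeeping.} The transition slab $\{t\le\rho(h,F)\le bt\}$ projects onto $E^*=\mathcal G\setminus F$, not onto $F'$. So the cutoff errors do not sum to $\int_{F'}\min(N^\beta_{\mathcal L}f,\lambda)^2$. The correct bound, using only $|u|\le\lambda$ on the slab, is $\lambda^2\iint_{\mathrm{slab}}\frac{dh\,dt}{t}=\lambda^2(\log b)\,|E^*|\lesssim\lambda^2|E|$. The only term of the form $\int_{\{N\le\lambda\}}N^2$ is the boundary term $\int_F|f|^2$. Here $F\subset F'$ because $E$ is open, and $|f|\le N^\beta_{\mathcal L}f$ a.e. With this accounting your final bound is exactly \eqref{main 1.2}.
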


We emphasize that proving inequality \eqref{main 1.2} is highly nontrivial; it does not follow directly from any known results,  such as those in
\cite{FS1972,merryfield,HKMP2015,CowlingFanLiyan2025, CLLP2025,songJAM2011}.  The main difficulty  comes from the nonnegative potential \(V\) and the structure of the stratified Lie group.  On one hand, the geometric 
and algebraic structure of the stratified Lie group precludes the direct use of classical differentiation arguments that are standard in the Euclidean setting.  On the other hand, the presence of the nonnegative potential $V$ complicates the good-$\lambda$ inequalities.  
Even in the
Euclidean case, \eqref{main 1.2} was not stated explicitly before, though we point out that it can be derived from intermediate estimates in the proof of \cite{songJAM2011}.
However, the argument in \cite{songJAM2011} relies on the result and proof of Merryfield \cite{merryfield}. One key step in Merryfield's
 approach for the standard Laplacian ($V\equiv 0$) uses a nice bump function $\phi$ on $\mathbb R$--specifically, a compactly supported, smooth, positive even function--which satisfies the nice property $\frac{\partial}{\partial t}(\phi_t\ast f)=-\frac{\partial}{\partial x}(\rho_t\ast f)$, where $\rho(x)=x\phi(x)$.   Such constructions are not available on general stratified Lie groups. We refer to the last section for full details.

To overcome these difficulties, we develop a different approach inspired by \cite{liji2023}. 
Instead of relying on the standard bump functions used in \cite{merryfield,songJAM2011}, we construct a smooth  function  that  captures the range of $e^{-t\sqrt{-\Delta}}(\chi_{E_\lambda})$, that is, the Poisson semigroup associate with the sub-Laplacian $\Delta$ but not with $\L$ (see Section 3.2). This alternative choice is intrinsically compatible with the structure of stratified Lie groups and successfully accommodates the global behavior induced by the potential. We then integrate this with a delicate truncation and approximation procedure using compactly supported smooth cutoffs. This combined approach provides a rigorous framework for obtaining the required local estimates, a step that is indispensable for our subsequent analysis on product spaces.

We further derive a Fefferman--Stein inequality on product spaces, extending the  result of Merryfield \cite{merryfield} (see also recent progress in \cite{liji2023,CLLP2025}). We define the product area function and the non-tangential maximal function related to the Schr\"odinger operator $\L_i=-\Delta_i+V_i,$ where $V_i\in L^1_{loc}(\IR_i)$ for $i=1,2$, as follows: for $\g=(g_1,g_2)\in \IT$,

\begin{align}\label{esf}
\SP f(\g) = \left( \iint_{\Gamma(\g)} \left| \left( t_1\sqrt{\L_1} e^{-t_1\sqrt{\L_1}} \otimes t_2\sqrt{\L_2} e^{-t_2\sqrt{\L_2}} \right) f({\h}) \right|^2 \frac{{\ud h_1\ud h_2 \ud t_1\ud t_2}}{t_1^{Q_1+1} t_2^{
Q_2+1}} \right)^{1/2}
\end{align}
and
$$
N^\beta_{\L_1,\L_2}(f)(\g)=\sup_{{\bf h}\in\Gamma^\beta(\g)} \left|\left(   e^{-t_1\sqrt{\L_1}} \otimes  e^{-t_2\sqrt{\L_2}} \right) f({\bf h}) \right|, 
$$
where $Q_i$ is the homogeneous dimension of $\IR_i$ and $\rho_i$ is the homogeneous norm of $\IR_i$, ${\h} = (h_1, h_2)$, ${\bf t} = (t_1, t_2)$, and the product cone $\Gamma^\beta(\g)$ is defined by $\Gamma^\beta(\g) := \Gamma^\beta(g_1)\times \Gamma^\beta(g_2)$, with each marginal cone given by
\begin{equation*}
    \Gamma^\beta(g_i) := \{(h_i,t_i)\in \IR_{i}\times(0,\infty) : \rho_i(g_i,h_i)<\beta t_i\}, \quad \text{for } i=1,2.
\end{equation*}

We then extend the method of Theorem \ref{single fefferman-Stein} to the two-parameter product space.
\begin{theorem}\label{prod F-S thm}
There exist $C>0$ and $\beta>1$ such that for all $f\in C_0^\infty(\IT)$ and $\lambda>0$,
\begin{equation}\label{FS-Product space}
\begin{aligned}
&\left|\{\g\in\IT:\,\SP(f)(\g)>\lambda\}\right|\\
&\lesssim  \left|\{\g\in\IT:\,N^\beta_{\L_1,\L_2}(f)(\g)>\lambda\}\right|+\frac{C}{\lambda^2}\int_{\{\g:\, N^\beta_{\L_1,\L_2}(f)(\g)\leq \lambda\}}\left|N^\beta_{\L_1,\L_2}(f)(\g)\right|^2\,\rm{d}\g.
\end{aligned}
\end{equation}

\end{theorem}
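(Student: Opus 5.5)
We follow the scheme of Merryfield's product argument, but replace the Euclidean bump-function identity with the smooth cutoff built from the Poisson semigroup of the sub-Laplacian, as in the proof of Theorem \ref{single fefferman-Stein}.

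\textbf{Step 1: reduction to a set where $N$ is small.} Fix $\lambda>0$ and set
$E_\lambda=\{\g:\,N^\beta_{\L_1,\L_2}(f)(\g)>\lambda\}$. Define the enlarged open set
$$\widetilde E_\lambda=\{\g:\,\mathcal M_s(\chi_{E_\lambda})(\g)>1/2\},$$
where $\mathcal M_s$ is the strong maximal function on $\IT$ (the tensor product of the one-parameter Hardy--Littlewood maximal operators on each $\IR_i$). Since $\mathcal M_s$ is bounded on $L^2$, we have $|\widetilde E_\lambda|\lesssim|E_\lambda|$. It therefore suffices to bound $|\{\g\notin\widetilde E_\lambda:\,\SP f(\g)>\lambda\}|$. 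By Chebyshev this is at most
$$\lambda^{-2}\int_{\widetilde E_\lambda^c}|\SP f|^2\,\ud\g,$$
and it remains to show
$$\int_{\widetilde E_\lambda^c}|\SP f|^2\lesssim\int_{E_\lambda^c}|N^\beta_{\L_1,\L_2} f|^2+\lambda^2|E_\lambda|.$$

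\textbf{Step 2: the region of integration.} By Fubini,
$$\int_{\widetilde E_\lambda^c}|\SP f|^2\lesssim\iint_{\mathcal R}|\Theta_{\bf t} f(\h)|^2\frac{\ud\h\,\ud{\bf t}}{t_1t_2},\qquad \Theta_{\bf t}=t_1\sqrt{\L_1}e^{-t_1\sqrt{\L_1}}\otimes t_2\sqrt{\L_2}e^{-t_2\sqrt{\L_2}},$$
where $\mathcal R$ is the union of the product cones $\Gamma(\g)$ over $\g\notin\widetilde E_\lambda$. On $\mathcal R$, every ``product box'' of aperture $\beta$ around $(\h,{\bf t})$ meets $E_\lambda^c$ in a fixed proportion of its measure. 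Hence $|e^{-t_1\sqrt{\L_1}}\otimes e^{-t_2\sqrt{\L_2}}f|\le\lambda$ on a fattened region $\widetilde{\mathcal R}$, and
$$\iint_{\widetilde{\mathcal R}}|u|^2\frac{\ud\h\,\ud{\bf t}}{t_1t_2}\lesssim\int_{E_\lambda^c}|N f|^2+\lambda^2|E_\lambda|.$$

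\textbf{Step 3: the smooth cutoff.} Instead of a sharp truncation, we introduce the cutoff
$$\eta(\h,{\bf t})=\Psi\Bigl(\bigl(e^{-t_1\sqrt{-\Delta_1}}\otimes e^{-t_2\sqrt{-\Delta_2}}\bigr)(\chi_{\widetilde E_\lambda})(\h)\Bigr),$$
with $\Psi$ smooth, equal to $1$ near $0$ and vanishing past $1/2$, further multiplied by compactly supported cutoffs in $(\h,{\bf t})$ to justify all integrations by parts. This $\eta$ equals $1$ on $\mathcal R$ and is supported in $\widetilde{\mathcal R}$. Moreover $t_i|\nabla_{h_i}\eta|$ and $t_i|\partial_{t_i}\eta|$ are bounded and are supported in the ``boundary layer'' where $\eta$ is not locally constant. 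This layer has measure $\lesssim|E_\lambda|$ in the sense that
$$\iint|t_i\nabla\eta|^2\frac{\ud\h\,\ud{\bf t}}{t_1t_2}\lesssim|\widetilde E_\lambda|.$$
This Carleson-type bound follows from $L^2$ square function estimates for the Poisson semigroup of $\Delta_i$.

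\textbf{Step 4: iterated integration by parts.} Write $u=e^{-t_1\sqrt{\L_1}}\otimes e^{-t_2\sqrt{\L_2}}f$, which solves
$$(\partial_{t_i}^2-\L_i)u=0,\qquad i=1,2.$$
Then
$$|\Theta_{\bf t}f|^2=t_1^2t_2^2|\partial_{t_1}\partial_{t_2}u|^2.$$
We integrate against $\eta$ and apply the one-parameter identity from the proof of Theorem \ref{single fefferman-Stein} in the $t_1$ variable with $t_2$ frozen, and then in $t_2$. In each variable the integration by parts $\int t\,|\partial_t v|^2\eta\approx\int(|\nabla v|^2+V|v|^2)t\eta$ turns $\partial_t^2$ into $\L$ via $\langle\L v,v\rangle=\|\nabla v\|^2+\|V^{1/2}v\|^2\ge0$. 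The potential term has a favorable sign and is simply discarded or bounded by the same quantity. This step produces:
\begin{enumerate}[label=(\roman*)]
\item main terms $\iint|u|^2\eta\,\frac{\ud\h\,\ud{\bf t}}{t_1t_2}$ together with boundary terms at $t_i=0,\infty$, which are controlled by $\lambda^2|E_\lambda|+\int_{E_\lambda^c}|Nf|^2$ via Step 2;
\item error terms involving $\nabla\eta$ or $\partial_t\eta$ multiplied by $u$ and its derivatives in the other variable.
\end{enumerate}
In the error terms we use Cauchy--Schwarz, $|u|\le\lambda$ on $\operatorname{supp}\eta$, and the bound from Step 3. Mixed terms such as
$$t_1t_2\,\partial_{t_2}u\,\nabla_{h_1}u\,\nabla_{h_1}\eta$$
are handled by applying the one-parameter estimate in the second variable to the function $t_1\nabla_{h_1}u$. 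This reduces them to single-parameter Carleson quantities; the required one-parameter input is exactly the local estimate established in proving Theorem \ref{single fefferman-Stein}. Absorbing the terms that again contain $\iint|\Theta_{\bf t}f|^2\eta$ then closes the estimate.

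\textbf{Main obstacle.} The main difficulty is Step 4's control of the mixed cross terms, in which the cutoff derivative is taken in one parameter while the square function is taken in the other. We first attempt the reduction via vector-valued (in $t_2$) versions of the single-parameter local estimate. If that fails, the fallback is a Journ\'e-type covering lemma argument applied to the boundary layer.
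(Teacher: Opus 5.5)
Your architecture is the paper's: reduce to an $L^2$ bound for the area function off an enlarged bad set, insert a cutoff built from the sub-Laplacian Poisson extension of an indicator, and integrate by parts first in $(h_1,t_1)$ and then in $(h_2,t_2)$. The paper does this via $|\widetilde\nabla u|^2\le\frac12\widetilde\Delta(u^2)$ from Lemma~\ref{The Laplace of u^2}. However, the bookkeeping you build on this architecture fails in two parameters.

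\emph{The cutoff.} It must come from the bad set $E_\lambda$ itself. The paper's $\phi(p_{t_1}p_{t_2}\chi_{E_\lambda^c})$ is equivalent, since $p_{t_1}p_{t_2}1=1$. You also need to lower the threshold defining $\widetilde E_\lambda$ to $1/(10C_1)$, so that \eqref{product poisson controlled} gives $p_{t_1}p_{t_2}\chi_{E_\lambda}\le 1/10$ on $\mathcal R$; with threshold $1/2$ it only gives $C_1/2$. With $\chi_{\widetilde E_\lambda}$, the claim $\eta=1$ on $\mathcal R$ is false in general, because $\g\notin\widetilde E_\lambda$ controls the density of $E_\lambda$ near $\g$, not that of $\widetilde E_\lambda$. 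For instance, let $E_\lambda$ be a finely interlaced set of density just below $1/2$ on an annulus about $g_1$, times a large ball in the second variable. Then $\widetilde E_\lambda$ fills that region, and $p_{t_1}p_{t_2}\chi_{\widetilde E_\lambda}$ is close to $1$ at points of $\Gamma(\g)$ of intermediate height.

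\emph{The bounds in Steps 2 and 3 are infinite.} Since $u\to f$ as ${\bf t}\to0$ and ${\rm d}t_1{\rm d}t_2/(t_1t_2)$ is not integrable at $0$, $\iint_{\widetilde{\mathcal R}}|u|^2\frac{{\rm d}\h\,{\rm d}{\bf t}}{t_1t_2}=\infty$ as soon as $f\not\equiv0$ on $E_\lambda^c$. Likewise $\iint|t_1\widetilde\nabla_1\eta|^2\frac{{\rm d}\h\,{\rm d}{\bf t}}{t_1t_2}$ diverges at $t_2=0$, because for small $t_2$ the integral in $(\h,t_1)$ is generally bounded below. In two parameters every factor ${\rm d}t_i/t_i$ must be matched by a $t_i$-derivative on $u$ or on the cutoff. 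The only genuine main term is the double trace $\int|f|^2\phi^2(\chi_{E_\lambda^c})\,{\rm d}\h\le\int_{E_\lambda^c}|N^\beta_{\L_1,\L_2}f|^2$; bulk terms like your $\iint|u|^2\eta\frac{{\rm d}\h\,{\rm d}{\bf t}}{t_1t_2}$ must never appear.

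\emph{The mixed terms.} This is also why the mixed terms, which you leave open, do not reduce to single-parameter Carleson quantities. After the $(h_1,t_1)$ step the error term is $|t_2\widetilde\nabla_2u|^2\,|t_1\widetilde\nabla_1\Phi(p_{t_1}p_{t_2}\chi_{E_\lambda^c})|^2$, where $\Phi$ is a smooth modification of $\phi$ whose derivative dominates the $\phi'$- and $\phi\phi''$-terms. This term still carries a $t_2$-derivative on $u$, so neither $|u|\le\lambda$ nor a one-parameter Carleson bound can be used on it.

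The missing step is to rerun the one-parameter argument in $(h_2,t_2)$ on $u$ itself, not on $t_1\nabla_{h_1}u$, with the weight $|\widetilde\nabla_1\Phi(\cdot)|^2$. This produces two kinds of terms.
\begin{itemize}
\item A trace at $t_2=0$. This is a one-parameter Carleson term in $(h_1,t_1)$ and is $\lesssim\lambda^2|E_\lambda|$.
\item After absorption, using $|u|\le\lambda$ on the support, $\widetilde\nabla_ip_{t_i}1=0$, and harmonicity of $p_{t_1}p_{t_2}\chi_{E_\lambda}$ in each variable, $\lambda^2$ times two genuinely bi-parameter quantities:
\begin{itemize}
\item $\iint|t_1t_2\widetilde\nabla_1\widetilde\nabla_2p_{t_1}p_{t_2}\chi_{E_\lambda}|^2\frac{{\rm d}\h\,{\rm d}{\bf t}}{t_1t_2}$, bounded by the product Littlewood--Paley estimate;
\item $\iint|t_1\widetilde\nabla_1p_{t_1}p_{t_2}\chi_{E_\lambda}|^2|t_2\widetilde\nabla_2p_{t_1}p_{t_2}\chi_{E_\lambda}|^2\frac{{\rm d}\h\,{\rm d}{\bf t}}{t_1t_2}$. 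Here you dominate each factor by the Hardy--Littlewood maximal function in the other variable, which decouples the $t_1$- and $t_2$-integrals. Then apply H\"older in $\h$ and $L^4$ bounds for the vector-valued maximal operator and the one-parameter square functions.
\end{itemize}
\end{itemize}
Both quantities are $\lesssim|E_\lambda|$, and no Journ\'e-type covering argument is needed.
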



Crucially, the Fefferman--Stein inequality \eqref{FS-Product space} yields the weak-type endpoint estimate for the product area function $\SP(f)$ (see Corollary \ref{lemma auxiliary}):
\begin{align*}
\left|\left\{ \g \in \IT : |\SP(f)(\g)| > \lambda \right\}\right|
\leq C \left\|\frac{f}{\lambda}\right\|_{L \log^+ L(\IT)},
\end{align*}
where $$
\|f\|_{L\log^+ L(\IT)}:=\int_{\IT}|f(\g)|\log(e+|f(\g)|)\,{\rm d\g}.
$$
This endpoint bound, in turn, provides the fundamental ingredient needed to establish an atomic decomposition associated with the Schr\"odinger operators $\L_1$ and $\L_2$ for the Orlicz space $L\log^+ L(\IT)$. It is worth pointing out that, while atomic decompositions in both the classical and operator-adapted settings are usually established on Hardy spaces or their operator-associated counterparts (see, e.g., \cite{CF1980,CF1982,HLMMY}), the atomic decomposition obtained in this paper is developed specifically for the Orlicz space $L\log^{+}L(\IT)$. 


\begin{theorem}\label{lemma L log L atom}
Let
 $f\in L\log^+L(\IT)\cap L^2(\IT)$. Then we can
decompose $f$ as follows.

\begin{eqnarray}
f=\sum_ka_k,
\end{eqnarray}

\noindent where the $a_k$ are atoms with the following properties: 
each $a_k$ is associated to an open set $\mathcal{O}_k \in\IT$ with finite measure with
$|\mathcal{O}_k|\leq C\|2^{-k}f\|_{L\log^+L(\IT)}$, and

\medskip $(1)$ $a_k$ is supported in the enlargement of $\mathcal{O}_k$, that is 
supp $a_k \subset \bigcup\limits_{R\subset \mathcal{O}_k}3R$;

\medskip $(2)$ each $a_k$ can be further decomposed as $
a_k=\sum_{S\in \mathscr{M}(\mathcal{O}_k)}a_{k,S} $ for any positive integer $M$, and there exists a function
$b_{k,S}\in \mathcal {D}(\L_1^M\otimes \L_2^M)$ such that

\medskip

\hskip.5cm $(i)$  $a_{k,S}=(\L_1^M\otimes \L_2^M)b_{k,S}$;

\medskip

\hskip.5cm $(ii)$  {\rm supp}$(\L_1^{i_1}\otimes
\L_2^{i_2})b_{k,S}\subset 3S$, $0\leq i_1,i_2\leq M$;

\medskip

\hskip.5cm $(iii)$ $\|a_k\|_{L^2(\IT)}^2\leq C2^{2k}\|2^{-k}f|\|_{L\log^+L(\IT)}$ and for $0\leq
i_1,i_2\leq M$

$$ \sum_{S\in \mathscr{M}(\mathcal{O}_k)} \l(I)^{-4M}\l(J)^{-4M}
\|(\l(I)^2\L_1)^{i_1}\otimes (\l(J)^2\L_2)^{i_2}
b_{k,S}\|_{L^2(\IT )}^2\leq C2^{2k}\|2^{-k}f\|_{L\log^+L(\IT)}, $$
where $\L_1$ and $\L_2$ are Schr\"odinger operators as above, and $\ell(I)$, $\ell(J)$ denote the side-lengths of  cubes $I$ and $J$, respectively. The detailed geometric definition of the maximal dyadic rectangles $S=I\times J\in\mathscr{M}(\mathcal{O}_k)$ is postponed to Sections \ref{sec 2.4} and \ref{sec 2.5}.
\end{theorem}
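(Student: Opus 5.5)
The plan is to follow the Chang--Fefferman scheme for product Hardy spaces, as adapted to operators in \cite{CF1980,CF1982,HLMMY}, with the $L^1$-type size bound replaced by the $L\log^+L$ bound from Corollary \ref{lemma auxiliary}.

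\emph{Reproducing formula.} For $f\in L^2(\IT)$, spectral theory for the nonnegative self-adjoint operators $\L_1,\L_2$ gives a Calder\'on reproducing formula
\[
f=C_M\int_0^\infty\!\!\int_0^\infty \big((t_1^2\L_1)^{M+1}e^{-t_1\sqrt{\L_1}}\otimes (t_2^2\L_2)^{M+1}e^{-t_2\sqrt{\L_2}}\big)\big(t_1\sqrt{\L_1}e^{-t_1\sqrt{\L_1}}\otimes t_2\sqrt{\L_2}e^{-t_2\sqrt{\L_2}}\big)f\,\frac{\ud t_1\ud t_2}{t_1t_2},
\]
with convergence in $L^2$. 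To get compact supports I will replace the outer operators by $\Phi(t_i\sqrt{\L_i})$ with $\Phi$ even and $\widehat\Phi$ supported in $[-1,1]$, and write them as $(t_i^2\L_i)^M\Psi(t_i\sqrt{\L_i})$. Finite propagation speed of the wave equation for $\L_i$ then gives $\mathrm{supp}\,K_{\Psi(t\sqrt{\L_i})}\subset\{\rho_i(x,y)\le t\}$. This holds because $V_i\ge0$ and the Davies--Gaffney estimates are available.

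\emph{Level sets and tents.} For each $k\in\mathbb Z$ let
\[
\Omega_k=\{\g:\SP f(\g)>2^k\},\qquad \mathcal O_k=\{\g: M_s(\chi_{\Omega_k})(\g)>1/2\},
\]
where $M_s$ is the strong maximal function. Corollary \ref{lemma auxiliary} and the $L^2$-boundedness of $M_s$ give
\[
|\mathcal O_k|\lesssim|\Omega_k|\le C\|2^{-k}f\|_{L\log^+L}.
\]
Let $\B_k$ be the set of dyadic rectangles $R=I\times J$ with $|R\cap\Omega_k|>|R|/2$ and $|R\cap\Omega_{k+1}|\le|R|/2$. Each $R\in\B_k$ is contained in $\mathcal O_k$, so it lies in a unique maximal dyadic rectangle $S\in\mathscr M(\mathcal O_k)$. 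To each $R$ attach the tent region $T(R)=R\times[\ell(I)/2,\ell(I)]\times[\ell(J)/2,\ell(J)]$. Define
\[
b_{k,S}=C_M\sum_{R\in\B_k,\,R\subset S}\iint_{T(R)}\big(t_1^{2M}\Psi(t_1\sqrt{\L_1})\otimes t_2^{2M}\Psi(t_2\sqrt{\L_2})\big)\big(F\,\chi_{T(R)}\big)\frac{\ud{\bf t}}{t_1t_2},
\]
where $F=(t_1\sqrt{\L_1}e^{-t_1\sqrt{\L_1}}\otimes t_2\sqrt{\L_2}e^{-t_2\sqrt{\L_2}})f$. Set $a_{k,S}=(\L_1^M\otimes\L_2^M)b_{k,S}$ and $a_k=\sum_S a_{k,S}$. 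Summing over $k$ recovers $f$, because the $T(R)$ partition $\IT\times\mathbb R_+^2$ up to measure zero. Properties (1), (2)(i) and (2)(ii) follow from finite propagation: the kernel support in $\rho_i\le t_i\le\ell$ keeps everything in $3R\subset3S$.

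\emph{Size estimates.} For (iii), duality with $h\in L^2$ and the square function estimate $\|(\cdot)\|$ for $\Psi$ reduce both bounds to
\[
\sum_{R\in\B_k}\iint_{T(R)}|F|^2\,\ud\h\,\frac{\ud{\bf t}}{t_1t_2}\lesssim 2^{2k}|\widetilde\Omega_k|,\qquad \widetilde\Omega_k=\{M_s\chi_{\Omega_k}>1/2\}.
\]
For this I integrate $\SP f^2$ over $\widetilde\Omega_k\setminus\Omega_{k+1}$. Each $(\h,{\bf t})\in T(R)$ with $R\in\B_k$ lies in the cone of a set of points of $R\setminus\Omega_{k+1}$ of measure $\ge|R|/2$. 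Hence
\[
\sum_{R\in\B_k}\iint_{T(R)}|F|^2\lesssim\int_{\widetilde\Omega_k\setminus\Omega_{k+1}}\SP f^2\le 2^{2(k+1)}|\widetilde\Omega_k|,
\]
and $|\widetilde\Omega_k|\lesssim\|2^{-k}f\|_{L\log^+L}$. The versions with $(\ell(I)^2\L_1)^{i_1}\otimes(\ell(J)^2\L_2)^{i_2}$ follow in the same way: $t_i\sim\ell$ on $T(R)$, so $(t^2\L)^{i}t^{2M}\Psi$ is again a bounded spectral multiplier, and the orthogonality and square-function argument applies.

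\emph{Main obstacle.} I expect the hardest step to be ensuring that finite propagation speed holds for $\Psi(t\sqrt{\L_i})$ on stratified groups with $V_i$ merely in $L^1_{loc}$, while matching the area function built from the Poisson semigroup. This mismatch forces a change of square function. I would check that $\SP$ controls the analogous area function with $\Phi$, using subordination and $L^2$ off-diagonal bounds, or else run the reproducing formula with Poisson-based $F$ as written above. Doing so only needs the Poisson square function inside the cone, which is exactly what was used.
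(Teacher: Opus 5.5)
Your proposal is essentially the paper's proof: the level sets $\Omega_k$ of $\SP f$ enlarged via $\mathcal M_s$, the families $\B_k$ with their tents, and a Calder\'on reproducing formula pairing the Poisson-based $Q_{t_1t_2}f$ with $s^{2M}\Phi(s)$; the supports come from Lemma \ref{lemma finite speed}, which the paper derives from the Gaussian heat kernel bounds, so your ``main obstacle'' does not arise. The size bounds are then obtained, as in the paper, by duality, integrating $(\SP f)^2$ over $\widetilde\Omega_k\setminus\Omega_{k+1}$, and applying Corollary \ref{lemma auxiliary}; only normalizations need fixing: take tent heights a large multiple of $\ell$ (the paper uses $4C_*\ell<t_i\le 8C_*\ell$) so that $T(R)\subset\Gamma(\g)$ for every $\g\in R$, use the threshold $C_2/2$ from Lemma \ref{R cap Omega>1/2R} so that $R\subset\mathcal O_k$, and for the $b_{k,S}$ bounds use $t_i\lesssim\ell(S)$ together with the factor $(t_i/\ell(S))^{2(M-i)}$ rather than ``$t_i\sim\ell$'', since a merely bounded multiplier such as $\Psi$ with $\Psi(0)\neq 0$ does not satisfy the square-function estimate.
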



The operator-adapted atomic decomposition established in Theorem \ref{lemma L log L atom} is specifically designed to bypass the limitations of classical harmonic analysis in this setting. Due to the presence of the potentials $V_1$ and $V_2$, the area function and double Riesz transform associated with $\L_1$ and $\L_2$ possess non-smooth kernels. This places them firmly outside the classical product Calder\'on--Zygmund framework formulated in \cite{FSt1982}. Consequently, testing these operators against the standard $L\log^+L$ atoms developed in prior works (e.g., \cite{FSt1982, fefferman1986, CowlingFanLiyan2025, CLLP2025}) fails to yield the required bounds. 

By instead employing our newly established operator-adapted atoms, we can successfully capture the necessary cancellations. As a direct application, we obtain the following weak-type endpoint estimates.

\begin{theorem}\label{thm1}
 Consider an operator $T$ which is either:
(1) the Littlewood--Paley area function $\S$,  or
(2) 
the double Riesz transform $\nabla_{g_1} \L_1^{-1/2}\otimes \nabla_{g_2} \L_2^{-1/2}$.
Then $T$ satisfies the following weak-type endpoint estimate on the Orlicz space $L(\log^+ L)(\IT)$: for every $\lambda>0$,
\begin{align}\label{estimate1.2}
\left|\left\{ \g \in \IT : |Tf(\g)| > \lambda \right\}\right|
\leq C \left\|\frac{f}{\lambda}\right\|_{L \log^+ L(\IT)},
\end{align}
where
\begin{align}\label{double S-heat}
\S(f)(\g)=\bigg(\iint_{\Gamma(\g) }\big|\big(t_1^2\L_1 e^{-t_1^2\L_1}\otimes
t_2^2\L_2e^{-t_2^2\L_2}\big)f(\h)\big|^2
\frac{\ud\h\,\t}{t_1^{Q_1+1} t_2^{Q_2+1}}\bigg)^{1/2}.
\end{align}
\end{theorem}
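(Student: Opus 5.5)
The plan is to deduce \eqref{estimate1.2} from the atomic decomposition of Theorem \ref{lemma L log L atom}, following the scheme of R. Fefferman and Stein \cite{FSt1982}. The cancellation needed away from the atoms' supports is supplied by the representation $a_{k,S}=(\L_1^M\otimes\L_2^M)b_{k,S}$, in place of kernel smoothness.

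\textbf{Reduction.} By homogeneity (replace $f$ by $f/\lambda$) it suffices to take $\lambda=1$. By density, and since both operators are bounded on $L^2(\IT)$ (via spectral theory for $\S$, and for the double Riesz transform because $\|\nabla_{g_i}\L_i^{-1/2}u\|_2\le\|u\|_2$), we may assume $f\in L\log^+L\cap L^2$. Write $f=\sum_k a_k$. Since $T$ is sublinear, we have $|Tf|\le\sum_k|Ta_k|$. Choose weights $c_k>0$ with $\sum_k c_k\le 1$, for instance $c_k\simeq (1+|k|)^{-2}$, or a weight adapted to $2^{k}$ for $k\le0$. Then
\[
|\{|Tf|>1\}|\le \sum_k |\{|Ta_k|>c_k\}|.
\]
For each $k$, set $\widetilde{\mathcal O}_k=\{M_s\chi_{\mathcal O_k}>1/2\}$, where $M_s$ is the strong maximal function. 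The strong maximal theorem gives $|\widetilde{\mathcal O}_k|\lesssim|\mathcal O_k|\lesssim\|2^{-k}f\|_{L\log^+L}$. Summing this over $k$ is comparable to $\|f\|_{L\log^+L}$, because $\sum_k\Phi(2^{-k}|f|)\lesssim\Phi(|f|)$ for $\Phi(s)=s\log(e+s)$; the contribution from $k\le0$ must be handled with $\Phi(s)\approx s$ near zero, and the weights $c_k$ must be chosen compatibly. On $\widetilde{\mathcal O}_k$ we only use the measure bound. Off $\widetilde{\mathcal O}_k$, Chebyshev's inequality reduces the task to proving
\[
\int_{(\widetilde{\mathcal O}_k)^c}|Ta_k|^2\lesssim 2^{2k}\|2^{-k}f\|_{L\log^+L},
\]
up to a constant absorbing $c_k^{-2}2^{-2k}$. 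Equivalently, we need $\int_{(\widetilde{\mathcal O}_k)^c}|Ta_k|^2\lesssim\|a_k\|_2^2$ together with the quantities in (iii).

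\textbf{Off-support estimate.} Decompose $a_k=\sum_S a_{k,S}$ with $S=I\times J$. Apply Journé's covering lemma to $\mathcal O_k$: for each $S$ let $\widehat I\supset I$ be maximal such that $\widehat I\times J\subset\widetilde{\mathcal O}_k$, and symmetrically for $\widehat J$, with the enlargement $\gamma_1(S)=|\widehat I|/|I|$. Split the complement $(\widetilde{\mathcal O}_k)^c$ into the regions $(100\widehat I)^c\times\IR_2$ and $\IR_1\times(100 J)^c$, and the symmetric pair. For $T=\S$, write
\[
t_1^2\L_1e^{-t_1^2\L_1}\otimes t_2^2\L_2e^{-t_2^2\L_2}\,(\L_1^M\otimes\L_2^M)b = t_1^{-2M}t_2^{-2M}\big((t_1^2\L_1)^{M+1}e^{-t_1^2\L_1}\otimes(t_2^2\L_2)^{M+1}e^{-t_2^2\L_2}\big)b .
\]
Davies--Gaffney (Gaussian) off-diagonal estimates for $(t^2\L_i)^{m}e^{-t^2\L_i}$ hold since $V_i\ge0$. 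Use the full power $t^{-2M}$ for large $t$, and use the variants with $(\ell(I)^2\L_1)^{i_1}b$, $i_1<M$, for small $t$. Together these give decay of order $(\ell(I)/\mathrm{dist})^{2M-\epsilon}$ in each factor. Summing over $S$ through Journé's lemma, $\sum_S|S|\gamma_1(S)^{-\delta}\lesssim|\mathcal O_k|$, yields the bound in terms of the quantities in (iii). The double Riesz transform is treated in the same way, using the subordination $\L^{-1/2}=c\int_0^\infty e^{-s\L}s^{-1/2}\,ds$ and Gaffney estimates for $\sqrt s\,\nabla e^{-s\L}$, which are valid for Schr\"odinger operators with nonnegative potential.

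\textbf{Main obstacle.} The main difficulty is this off-support estimate. We have only $L^2$ off-diagonal (Gaffney) bounds, not pointwise kernel regularity. Consequently the one-parameter-far/one-parameter-near regions require mixing $L^2$ boundedness in one variable with off-diagonal decay in the other, and the summation must be organized via Journé's lemma to avoid losing the $\gamma_1(S)^{-\delta}$ decay. The cross terms with $0\le i_1,i_2\le M$ in (iii) exist exactly to feed this mixed regime, so the first thing to try is to match each region to the appropriate $(i_1,i_2)$.
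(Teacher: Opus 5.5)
\textbf{There is a genuine gap, in the reduction for $k\ge1$.} Your Chebyshev step leaves you with $\sum_{k\ge1}c_k^{-2}\int_{(\widetilde{\mathcal O}_k)^c}|Ta_k|^2$. The target you set is $\int_{(\widetilde{\mathcal O}_k)^c}|Ta_k|^2\lesssim 2^{2k}\|2^{-k}f\|_{L\log^+L}$, which by (iii) is just $\|a_k\|_2^2$; it already follows from $L^2$-boundedness of $T$ with no off-support analysis. It is also too weak. Since $\log(e+s)\ge1$, we have $2^{2k}\|2^{-k}f\|_{L\log^+L}\ge 2^{k}\|f\|_{L^1}$, so with $c_k\le 1$ the $k$-th term is at least $2^k\|f\|_{L^1}$ and the series diverges.

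Better off-diagonal bounds cannot fix this while the exceptional set is only a fixed dilate of the supports. Just outside $100(\hat I\times\hat J)$, $Ta_{k,S}$ is for a generic atom of size a fixed fraction of the atom's height $\approx 2^k$. The off-support mass is then a fixed fraction of $\|a_k\|_2^2$, with no gain in $k$. The same obstruction appears in $L^1$: the available bound $2^k|\mathcal O_k|\lesssim 2^k\|2^{-k}f\|_{L\log^+L}$ is $\ge\|f\|_{L^1}$ for every $k$. This is exactly what separates $L\log^+L$ from $H^1$.

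\textbf{The missing idea is a $k$-dependent enlargement.} Take $\hat I$ maximal with $\hat I\times J\subset\widetilde{\widetilde\Omega}_k$ and $\hat J$ maximal with $\hat I\times\hat J\subset\widetilde{\widetilde{\widetilde\Omega}}_k$ (the paper's $\Omega_k$ is your $\mathcal O_k$). The paper then sets $S^\dagger=100\beta_k(\hat I\times\hat J)$ with $\beta_k=2^{k/(2Q_1+2Q_2)}$.
\begin{itemize}
\item The cost is affordable: $|\bigcup_S S^\dagger|\lesssim 2^{k/2}\|2^{-k}f\|_{L\log^+L}\le 2^{-k/2}\|f\|_{L\log^+L}$, which is summable over $k\ge1$.
\item The gain is decay in $k$: off-diagonal decay in the far variable now yields an extra factor $\beta_k^{-1}$, namely $\int_{(S^\dagger)^c}|Ta_{k,S}|\lesssim\beta_k^{-1}\gamma_1(S)^{-1}|S|^{1/2}(\cdots)$.
\item Cauchy--Schwarz and Journ\'e's lemma then give $\beta_k^{-1}|\Omega_k|^{1/2}(2^{2k}\|2^{-k}f\|_{L\log^+L})^{1/2}\lesssim\beta_k^{-1}\|f\|_{L\log^+L}$, summable in $k$.
\item One concludes with $L^1$ Chebyshev at level $1$ applied to $T(\sum_{k\ge1}a_k)$ off $\bigcup_{k,S}S^\dagger$; no weights $c_k$ are needed.
\end{itemize}
The off-support region must also be the complement of these dilated rectangles, not of $\widetilde{\mathcal O}_k$. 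Your split into $(100\hat I)^c\times\mathcal G_2$ and $\mathcal G_1\times(100J)^c$ leaves $(100\hat I\times100J)\setminus\widetilde{\mathcal O}_k$ uncovered, and no decay is available there.

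\textbf{Your $k\le0$ bookkeeping is also wrong, though fixable.} The inequality $\sum_k\Phi(2^{-k}|f|)\lesssim\Phi(|f|)$ holds only for $k\ge0$: for $k\le0$ the argument $2^{-k}|f|$ is large, not small. Moreover $\mathcal O_k\supset\mathcal O_0$ for $k\le0$, so $\sum_{k\le0}|\widetilde{\mathcal O}_k|=\infty$ unless $|\mathcal O_0|=0$. These atoms must be handled purely in $L^2$, with no exceptional sets: $\|\sum_{k\le0}a_k\|_2\le\sum_{k\le0}2^k\|2^{-k}f\|_{L\log^+L}^{1/2}\lesssim\|f\|_{L\log^+L}^{1/2}$, followed by $L^2$-boundedness of $T$.
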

\medskip

Our results can be regarded as a substantial extension of \cite{CLLP2025}. Indeed, the results in \cite{CLLP2025} are confined to the classical case $V \equiv 0$, whereas the present paper extends the theory to Schr\"odinger operators with nontrivial potentials. Due to the presence of the potential $V$, the operators considered in this paper possess nonsmooth kernels and therefore fall outside the Calder\'on--Zygmund framework, distinguishing our results from those in \cite{fefferman1986,Feff1986,F1987,HLCL2010,CLLP2025}, where only operators with smooth kernels are studied. In addition, we establish global endpoint estimates, which differ fundamentally from the local ones obtained in \cite{fefferman1986}.
Furthermore, for Riesz transforms associated with Schr\"odinger operators, existing works, e.g., \cite{HLMMY,SY2010}, have  proved  boundedness from operator-adapted Hardy spaces  to the classical Hardy space. In contrast, we obtain endpoint estimates directly on the Orlicz space  $L\log^{+}L$ in the product setting.

The layout of the paper is as follows.  In Section \ref{section-backgound}, we review the necessary background on heat kernels and finite propagation speed for the wave equation.
 In Section \ref{Single proof}, we prove Theorem \ref{single fefferman-Stein}, which is the single-parameter Fefferman--Stein inequality associated with  Schr\"odinger operators. The product version of the Fefferman--Stein inequality associated with Schr\"odinger operators, Theorem \ref{prod F-S thm}, is proved in
 Section \ref{prouct proof}.
 In Section \ref{atomic decomposition}, we apply  Theorem \ref{prod F-S thm} to prove Theorem \ref{lemma L log L atom}, which provides  an atomic decomposition of the global  $L\log^+L(\IT)$  space associated with the Schr\"odinger operators $\L_1$ and $\L_2$.
 As an application of Theorem \ref{lemma L log L atom},    Theorem \ref{thm1} is proved in Section \ref{application}. In the last section we provide some remarks on the corresponding Fefferman--Stein type inequality on Eucldiean spaces.

Throughout, the letters ``$c$" and ``$C$" will denote (possibly
different) constants that are independent of the essential
variables. $A \lesssim B$ denotes that there exists a constant $C>0$ such that $A \leq CB$;  
$A \approx B$ means $A \lesssim B$ and $B \lesssim A$. We write $\chi_E$ for the indicator function of a set $E$. Additionally, we write $\mathbb{R}_+$ for the open interval $(0,\infty)$.

\vskip 1cm

\section{Preliminaries}\label{section-backgound}
\setcounter{equation}{0}

\subsection{Stratified nilpotent Lie groups}\label{Sec2.1}
Recall that a connected, simply connected nilpotent Lie group $\mathcal{G}$ is said to be stratified if its left-invariant Lie algebra $\mathfrak{g}$ (assumed real and of finite dimension) admits a direct sum decomposition
\begin{align*}
\mathfrak{g} = \bigoplus_{i = 1}^k V_i \  \mbox{where $[V_1, V_i] = V_{i + 1}$ for $i \leq k - 1$.}
\end{align*}
One identifies $\mathfrak{g}$ and $\mathcal{G}$ via the exponential map
\begin{align*}
\exp: \mathfrak{g} \longrightarrow \mathcal{G},
\end{align*}
which is a diffeomorphism.

We fix a (bi-invariant) Haar measure $dg$ on $\mathcal G$  once and for all (which is just the lift of Lebesgue measure on $\frak g$ via $\exp$).

There is a natural family of dilations on $\frak g$ defined for $r > 0$ as follows:
\begin{align*}
\delta_r \left( \sum_{i = 1}^k v_i \right) = \sum_{i = 1}^k r^i v_i, \quad \mbox{with $v_i \in V_i$}.
\end{align*}
This allows the definition of dilation on $\mathcal{G}$, which we still denote by $\delta_r$.

We choose once and for all a basis $\{\X_1, \cdots, \X_n\}$ for $V_1$ and consider the sub-Laplacian $\Delta = \sum_{j=1 }^n \X_j^2 $. Observe that $\X_j$ ($1 \leq j \leq n$) is homogeneous of degree $1$ and $\Delta$ of degree $2$ with respect to the dilations in the sense that:
\begin{align*}
&\X_j \left( f \circ \delta_r \right) = r \, \left( \X_j f \right) \circ \delta_r, \qquad  1 \leq j \leq  n, \  r > 0, \ f \in C^1; \\
&\delta_{\frac{1}{r}} \circ \Delta \circ \delta_r = r^2 \, \Delta, \hskip 2cm  r > 0.
\end{align*}

Let $Q$ denote the homogeneous dimension of $\mathcal{G}$, namely,
\begin{align}\label{homo dimension}
Q= \sum_{i=1}^k i\, {\rm dim} V_i.
\end{align}
Next we recall the homogeneous norm $\rho$ on $\mathcal G$ (See, for example, \cite{FoSt}), which is  
defined as a continuous function
$g\to \rho(g)$ from $\mathcal G$ to $[0,\infty)$, which is $C^\infty$ on $\mathcal G\setminus \{o\}$
and satisfies the following properties:
\begin{enumerate}
\item[(a)] $\rho(g^{-1}) =\rho(g)$;
\item[(b)] $\rho({ \delta_r(g)}) =r\rho(g)$ for all $g\in \mathcal G$ and $r>0$;
\item[(c)] $\rho(g) =0$ if and only if $g=o$.
\end{enumerate}
For the existence (also the construction) of the homogeneous norm $\rho$ on  $\mathcal G$,
we refer to  \cite[Chapter 1, Section A]{FoSt}. For convenience,
we set
\begin{align*}
\rho(g, g') = \rho(g'^{-1} \circ g) = \rho(g^{-1} \circ g'), \quad \forall g, g' \in \mathcal{G}.
\end{align*}
This defines a quasi-distance in the sense of  Coifman-Weiss, namely, there exists a constant $C \geq1 $ such that
 \begin{align} \label{qdr}
 \rho(g_1, g_2) \leq C \, \left( \rho(g_1, g') + \rho(g', g_2)   \right), \qquad \forall g_1,\, g_2,\, g' \in  \mathcal{G}.
\end{align}
In the sequel, we fix a homogeneous norm $\rho$ on  $\mathcal G$.


In the sequel, for $g\in\mathcal G$ and $r>0$, {$B(g,r)$
denotes the open ball defined by $\rho$.
Without} loss of generality, we may suppose that the Haar measure has been normalized such that the measure of $B(o, 1)$, $|B(o, 1)| = 1$.

Now we introduce some estimates for the kernel. Let $H_t$ ($t> 0$)  be the heat kernel (that is, the integral kernel of $e^{t \Delta}$)
on $\mathcal G$. For convenience, we set $H_t(g) = H_t(g, o)$ (that is, in this article, for a convolution operator, we will identify the integral kernel with the convolution kernel) and $H(g) = H_1(g)$.

Recall that  for $t>0$ and $ g \in \mathcal G$ (see, e.g., \cite{FoSt}),
\begin{align} \label{hkp1}
H_t(g) = t^{-\frac{Q}{2}} H(\delta_{\frac{1}{\sqrt{t}}}(g)),
\end{align}
and the kernel $H_t$ of $e^{t\Delta}$ satisfies  (see \cite{JS1986})
\begin{align}\label{Delta estmates}
    0<H_t(g)\leq Ct^{-\frac{Q}{2}}e^{-c\frac{\rho(g)^2}{t}}.
\end{align}

Let $p_t(g,h)$ denote the kernel of the Poisson semigroup $e^{-t\sqrt{-\Delta}}$. By the subordination formula, we obtain
\begin{align*}
    0< p_t(g,h)\lesssim \cfrac{t}{(t^2+\rho(g,h)^2)^{(Q+1)/2}}.
\end{align*}

The kernel of the $j^{\mathrm{th}}$  Riesz transform $\X_j (-\Delta)^{-\frac{1}{2}}$ ($1 \leq j \leq  n$) is written simply as $K_j(g, g') = K_j(g'^{-1} \circ g)$. It is well-known that
\begin{align} \label{kjs}
K_j \in C^{\infty}(\mathcal G \setminus \{o\}), \ K_j(\delta_r(g)) = r^{-Q} K_j(g), \quad \forall \, g \neq o, \ r > 0, \ 1 \leq j \leq  n,
\end{align}
which can also be explained by \eqref{hkp1} and the fact that
\begin{align*}
K_j(g) = \frac{1}{\sqrt{\pi}} \int_0^{+\infty} t^{-\frac{1}{2}} \X_j H_t(g) \, \ud t  = \frac{1}{\sqrt{\pi}} \int_0^{+\infty} t^{- \frac{Q}{2} - 1} \left( \X_j H \right)(\delta_{\frac{1}{\sqrt{t}}}(g)) \, \ud t.
\end{align*}

\medskip

\subsection{Schr\"odinger operator}\label{Sec2.2}
Let $V \geq 0$ be a non-zero potential function in the space $L^1_{loc}(\mathcal{G})$ of locally integrable functions, where $n \geq 1$. We define the sesquilinear form $\B$ by
\[ \B(u,v) =\int_{\mathcal{G}} \nabla u \nabla v\ud g+ \int_{\mathcal{G}} Vuv\ud g   \]
with its domain given by
\[ \mathcal{D}(\B) = \bigg\{ u \in W^{1,2}(\mathcal{G}) : \int_{\mathcal{G}} V(g)|u(g)|^2\ud g < \infty \bigg\}. \]
Here $\nabla := (\X_1, \dots, \X_n)$ denotes the sub-gradient, and $W^{1,2}(\mathcal{G})$ is the corresponding Sobolev space consisting of functions $f \in L^2(\mathcal{G})$ such that $\nabla f \in L^2(\mathcal{G})$, where  $\nabla f$ is understood in the weak sense.
 The form $\B(\cdot,\cdot)$ is symmetric and closed. According to Simon's theorem, $\B(\cdot,\cdot)$ equals the form closure of its restriction to $C_0^\infty(\mathcal{G})$.

Denote by $\L$ the self-adjoint operator associated with  $\B(\cdot,\cdot)$. Its domain is defined as
\[ \mathcal{D}(\L) = \bigg\{ u \in \mathcal{D}(\B) : \text{there exists } v \in L^2(\mathcal{G}) \text{ such that } \B(u,\phi) =\int_{\mathcal{G}}v\phi \ud g \text{ for all } \phi \in \mathcal{D}(\B) \bigg\}. \]
Formally, $\L$ represents the Schr\"odinger operator $-\Delta + V$.  Let $H_t^{\L}$ denote the kernel of $e^{-t\L}$.  
 The Trotter product formula (see \cite{Goldstein1985}) yields,  for every $t>0$ and $g,\,h \in \mathcal{G}$,  
\begin{equation}\label{L Heat kernel}
     0\leq H_t^\L(g,h) \leq C t^{-Q/2} \exp\bigg(-c\frac{\rho(g,h)^2}{t}\bigg). 
     \end{equation}
Now denote  by $H_{t,k}^\L$ the kernels of the operator $(t^2\L)^ke^{-t^2\L}$ for $k=1,2,\cdots$.   Combining \cite [Theorem 6.17]{Ouhabaz2005}  with  \eqref{L Heat kernel}, we obtain
    \begin{equation}\label{estimate heat kernel}
        \left|H_{t,k}^\L(g,h)\right|\leq \cfrac{C_k}{t^Q} \exp\left(-c\cfrac{\rho(g,h)^2}{t^2}\right),
    \end{equation}
    for all $k\in {\mathbb N}$, $t>0$ and almost every $g,h\in\IR$.
    
Let $p_t^\L(g,h)$ denote the kernel of the Poisson semigroup $e^{-t\sqrt{\L}}$. 
Based on the subordination formula
\[
e^{-t\sqrt{\L}}=\cfrac{1}{2\sqrt{\pi}}\int_0^\infty \cfrac{te^{-t^2/4v}}{\sqrt{v}}e^{-v\L}\,\cfrac{{\rm d}v}{v},
\]
together with the doubling property of the measure and the estimate of $H^{\L}_t$ as above, we obtain \begin{align}\label{poisson bound estimate}
    |p_t^\L(g,h)|\lesssim \cfrac{t}{(t^2+\rho(g,h)^2)^{(Q+1)/2}},
\end{align}
for each $g,h\in\IR$ and $t>0$. 

Recall  that the Hardy--Littlewood maximal operator is defined as follows.
$$
\mathcal{M}(f)(g):=\sup\limits_{B\ni g} \left\{\cfrac{1}{|B|}\int_{B} |f(h)|\ud h: \  {\rm B \ is \  a \  ball \  of\ } {\mathcal G}  \right\}
$$ 
It is easy to check that for every $f\in L^2(\IR)$, 
\begin{align}\label{poisson controlled by H--L}
  |p_t(f)(g)|+|p_t^\L(f)(g)|\leq C_0 \inf_{z:\,\rho(z,g)<t}\mathcal{M}(f)(z), 
\end{align}
where $C_0$ is independent of the underlying variables.



\medskip

\subsection{Finite  propagation speed for the wave equation and spectral multipliers}\ Note that $\L$ is a non-negative, self-adjoint operator on $L^2(\mathcal{G})$. Let $E_{\L}(\lambda)$ denote its spectral decomposition. Then, for every bounded
Borel function $F:[0,\infty)\to{\mathbb{C}}$, one defines the bounded operator
$F(\L): L^2(\mathcal{G})\to L^2(\mathcal{G})$ by the formula
\begin{eqnarray}\label{e3.11}
F(\L):=\int_0^{\infty}F(\lambda)\,{\rm dE}_{\L}(\lambda).
\end{eqnarray}
In particular, the operator $\cos(t\sqrt{\L})$ is  well-defined and bounded
on $L^2(\mathcal{G})$. Moreover, it follows from \eqref{L Heat kernel} and   \cite[Theorem 3]{CS}
  that    there exists a finite,
positive constant $c_0$ with the property that the Schwartz
kernel $K_{\cos(t\sqrt{\L})}$ of $\cos(t\sqrt{\L})$ satisfies
\begin{eqnarray}\label{e3.12} \hspace{1cm}
{\rm supp} K_{\cos(t\sqrt{\L})}\subseteq
\big\{(g,h)\in {\IR\times\IR: \rho(g,h)\leq c_0 t\big\}.}
\end{eqnarray}
\noindent See also \cite{Sikora2004}. By the Fourier inversion
formula, whenever $F$ is an even, bounded, Borel function with its Fourier transform
$\hat{F}\in L^1(\mathbb{R})$, we have
\begin{eqnarray}\label{XCP}
F(\sqrt{\L})=(2\pi)^{-1}\int_{-\infty}^{\infty}{\hat F}(t)\cos(t\sqrt{\L})\ud t,
\end{eqnarray}
which, when combined with (\ref{e3.12}), gives
\begin{eqnarray}\label{e3.13} \hspace{1cm}
K_{F(\sqrt{\L})}(g,h)=(2\pi)^{-1}\int_{|t|\geq c_0^{-1}\rho(g,h)}{\hat F}(t)
K_{\cos(t\sqrt{\L})}(g,h)\ud t,\qquad \forall\,g,\,h\in\mathcal{G}.
\end{eqnarray}

The following lemma is very useful in the sequel.

\begin{lemma}[\cite{H}]\label{lemma finite speed} Let $\varphi\in C^{\infty}_0(\mathbb R)$ be
even, $\mbox{supp}\,\varphi \subset (-c_0^{-1}, c_0^{-1})$, where $c_0$ is
the constant in (\ref{e3.12}). Let $\Phi$ denote the Fourier transform of
$\varphi$. Then for every $k=0,1,2,\dots$, and for every $t>0$,
the kernel $K_{(t^2\L)^{k}\Phi(t\sqrt{\L})}(\cdot,\cdot)$ of the operator
$(t^2\L)^{k}\Phi(t\sqrt{\L})$ which was defined by the spectral theory, satisfies

\begin{eqnarray}
{\rm supp}\ \, K_{(t^2\L)^{k}\Phi(t\sqrt{\L})}(g,h) \subseteq
\Big\{(g,h)\in \IT: \rho(g,h)\leq t\Big\}.
\end{eqnarray}
\end{lemma}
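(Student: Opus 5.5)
The statement is Lemma \ref{lemma finite speed}. I would prove it by combining the Fourier representation \eqref{XCP} with the finite propagation speed \eqref{e3.12}, after rescaling in $t$.

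\emph{Reduction to $k=0$.} Fix $t>0$ and put $F_t(\lambda):=(t\lambda)^{2k}\Phi(t\lambda)$, so that $F_t(\sqrt{\L})=(t^2\L)^k\Phi(t\sqrt{\L})$. Since $\varphi$ is even and smooth with compact support, $\Phi$ is an even Schwartz function. Hence $\lambda\mapsto\lambda^{2k}\Phi(\lambda)$ is even, bounded and Schwartz. Its Fourier transform is, up to a constant, $(-1)^k\varphi^{(2k)}$. This follows from the rule $\widehat{\lambda^{2k}\psi}=(i\partial_s)^{2k}\hat\psi$ together with Fourier inversion: $\hat\Phi=\hat{\hat\varphi}=2\pi\varphi(-\cdot)=2\pi\varphi$. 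Consequently $\widehat{F_t}(s)=c_k\,t^{-1}\varphi^{(2k)}(s/t)$, which is supported in $|s|<c_0^{-1}t$ and lies in $L^1$.

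\emph{Kernel support.} I would apply \eqref{XCP} to $F_t$:
\[
F_t(\sqrt{\L})=(2\pi)^{-1}\int_{|s|<c_0^{-1}t}\widehat{F_t}(s)\cos(s\sqrt{\L})\,\ud s .
\]
By \eqref{e3.12}, for each $s$ in this range the kernel of $\cos(s\sqrt{\L})$ is supported in $\{\rho(g,h)\le c_0|s|\}\subseteq\{\rho(g,h)\le t\}$. Equivalently, \eqref{e3.13} shows that the kernel vanishes unless some $s$ with $|s|\ge c_0^{-1}\rho(g,h)$ meets $\mathrm{supp}\,\widehat{F_t}$, which forces $\rho(g,h)<t$.

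\emph{Rigorous formulation.} To make the kernel statement precise, I would test against $u,v\in L^2$ with $\mathrm{supp}\,u,\mathrm{supp}\,v$ separated by $\rho$-distance greater than $t$. Each $\langle\cos(s\sqrt{\L})u,v\rangle$ vanishes for $|s|<c_0^{-1}t$, so $\langle F_t(\sqrt{\L})u,v\rangle=0$. Since the operator is given by an $L^1(\mathrm{d}s)$-weighted integral of uniformly bounded operators, this identity holds by the spectral theorem and Fubini.

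\emph{Main obstacle.} The only delicate point is the justification of \eqref{XCP}, i.e.\ interchanging the spectral integral with the Fourier integral. This is immediate because $\widehat{F_t}\in L^1$ and $\|\cos(s\sqrt{\L})\|\le1$. The finite speed property itself is taken from \eqref{e3.12}.
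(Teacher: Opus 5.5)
Your proposal is correct and follows essentially the same route as the paper. The paper gives no separate proof beyond the citation; its surrounding setup \eqref{e3.12}--\eqref{e3.13} is exactly what you use. You write $(t^2\L)^k\Phi(t\sqrt{\L})$ via \eqref{XCP} as a cosine integral against $\widehat{F_t}=c_k t^{-1}\varphi^{(2k)}(\cdot/t)$, which is supported in $|s|<c_0^{-1}t$, and then invoke finite propagation speed.
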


\medskip

Next, for $s>0$, we define
$$
{\mathbb F}(s):=\Big\{\psi:{\Bbb C}\to{\Bbb C}\ {\rm measurable}: \ \
|\psi(z)|\leq C {|z|^s\over ({1+|z|^{2s}})}\Big\}.
$$
Then for any non-zero function $\psi\in {\Bbb F}(s)$, we have that
$\{\int_0^{\infty}|{\psi}(t)|^2\frac{\ud t}{t}\}^{1/2}<\infty$.
We denote  $\psi_t(z)=\psi(tz)$. It follows from the spectral theory
in \cite{Yo} that for any $f\in L^2({\mathcal G})$,

\begin{eqnarray}
\Big\{\int_0^{\infty}\|\psi(t\sqrt{\L})f\|_{L^2(\mathcal{G})}^2{\ud t\over t}\Big\}^{1/2}
&=&\Big\{\int_0^{\infty}\big\langle\,\overline{ \psi}(t\sqrt{\L})\,
\psi(t\sqrt{\L})f, f\big\rangle {\ud t\over t}\Big\}^{1/2}\nonumber\\
&=&\Big\{\big\langle \int_0^{\infty}|\psi|^2(t\sqrt{\L}) {\ud t\over t}f,
f\big\rangle\Big\}^{1/2}\nonumber\\
&\leq& \kappa \|f\|_{L^2({\mathcal G})}, \label{e2.155}
\end{eqnarray}

\noindent where $\kappa=\big\{\int_0^{\infty}|{\psi}(t)|^2
{\ud t/t}\big\}^{1/2},$ an estimate which will be often used in the sequel.
\medskip

\subsection{Systems of pseudodyadic cubes and products of stratified groups}\label{sec 2.4}

In the classical Euclidean setting, dyadic cubes are a fundamental tool in harmonic analysis. However, in stratified Lie groups or more general metric spaces, the traditional dyadic cubes are no longer applicable. To develop a theory of singular integrals in non-homogeneous spaces or product spaces, it becomes necessary to construct analogous systems of ``pseudodyadic cubes". The work of Hytönen and Kairema \cite{hk2012} provides such a construction in geometrically doubling metric spaces. In this section, we present a streamlined version of their construction and apply it to the product spaces of stratified groups.


\begin{theorem}[{\cite[Theorem 2.2]{hk2012}}]
\label{thm:hytonen-kairema}
Let $c_*$, $C_*$ and $\kappa$ be constants such that $0 < c_* \leq C_* < \infty$ and $12C_*\kappa \leq c_*$, and let $(\mathcal{G},\rho)$ be a metric stratified group. Then, for all $k \in \mathbb{Z}$, there exist families $\mathcal{Q}_k(\mathcal{G})$ of pseudodyadic cubes $Q$ with centres $z(Q)$, such that:

\begin{enumerate}
    \item $\mathcal{G}$ is the disjoint union of all $Q \in \mathcal{Q}_k(\mathcal{G})$, for each $k \in \mathbb{Z}$.
    \item $B(z(Q), c_*\kappa^{k}/3) \subseteq Q \subseteq B(z(Q), 2C_*\kappa^{k})$ for all $Q \in \mathcal{Q}_k(\mathcal{G})$.
    \item if $Q \in \mathcal{Q}_k(\mathcal{G})$ and $Q' \in \mathcal{Q}_{k'}(\mathcal{G})$ where $k \leq k'$, then either $Q \cap Q' = \emptyset$ or $Q \subseteq Q'$; in the second case, $B(z(Q), 2C_*\kappa^{k}) \subseteq B(z(Q'), 2C_*\kappa^{k'})$.
\end{enumerate}
\end{theorem}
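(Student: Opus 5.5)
The plan is to follow the construction of Hytönen and Kairema, after checking that $(\mathcal G,\rho)$ satisfies its hypotheses. First, I would replace $\rho$ if necessary by an equivalent homogeneous norm that induces a genuine metric; such norms exist on stratified groups, and in any case the statement assumes a metric. Second, I would verify geometric doubling. By homogeneity and the normalization $|B(o,1)|=1$ we have $|B(g,r)|=r^Q$. Hence a ball of radius $r$ contains at most $C\,(r/s)^Q$ points that are pairwise at distance at least $s$. This is the only geometric input needed. Throughout I write $\kappa^k$ for the scale of the generation-$k$ cubes and call the generation with the smaller scale the \emph{finer} one. The inclusions in (2) and (3) are then read with respect to scale.

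\textbf{Step 1 (nested nets).} For each $k$ I would choose a set of centres $\{z^k_\alpha\}_\alpha$ with two properties: $\rho(z^k_\alpha,z^k_\beta)\ge c_*\kappa^k$ for $\alpha\ne\beta$, and $\min_\alpha\rho(x,z^k_\alpha)<C_*\kappa^k$ for every $x$. The centres should also be nested, so that every coarse centre is also a fine centre. This is done by Zorn's lemma (maximal separated sets), extending a coarse net to a finer one. Doubling guarantees that each centre has boundedly many nearby centres, which is needed for the tie-breaking below.

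\textbf{Step 2 (parent map).} Each fine centre is assigned a parent among the coarser centres. The parent is a nearest coarser centre, with ties broken by a fixed well-ordering, and a centre that persists to the coarser net is its own parent. Iterating gives a tree. Define the preliminary cube $\widetilde Q^k_\alpha$ as the closure of the set of all descendants of $z^k_\alpha$. Then define the half-open cubes $Q^k_\alpha$ by removing from $\widetilde Q^k_\alpha$ the points already claimed by earlier cubes in the well-order. This is done consistently across generations, so that a point of a fine cube lies in the coarse cube of its ancestor.

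\textbf{Step 3 (ball inclusions).} Property (2) follows from geometric series estimates. A descendant of $z^k_\alpha$ at depth $j$ lies within
$$\sum_{i\ge0}C_*\kappa^{k+i}\le \frac{C_*\kappa^k}{1-\kappa}\le 2C_*\kappa^k$$
of $z^k_\alpha$, which gives the outer ball. For the inner ball, let $x$ satisfy $\rho(x,z^k_\alpha)<c_*\kappa^k/3$. The nearest-centre chain of $x$ at finer levels drifts by at most $2C_*\kappa^{k+1}$. By the assumption $12C_*\kappa\le c_*$, this drift is much smaller than $c_*\kappa^k$, while other generation-$k$ centres are at distance at least $c_*\kappa^k$ from $z^k_\alpha$. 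So the chain is forced to have $z^k_\alpha$ as its generation-$k$ ancestor, which places $x\in Q^k_\alpha$. The ball nesting in (3) is obtained the same way: if $Q\subseteq Q'$, then $\rho(z(Q),z(Q'))\le 2C_*\kappa^{k'}-2C_*\kappa^{k}$ by the same series bound, so the balls are nested by the triangle inequality.

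\textbf{Main obstacle.} The delicate point is (1) together with the dichotomy in (3): every point of $\mathcal G$, not only the net points, must belong to exactly one cube of each generation, compatibly across generations. Closures of descendant sets overlap on boundaries. I would first prove that the closed cubes $\widetilde Q^k_\alpha$ cover $\mathcal G$. For this, take any $x$, choose generation-$j$ centres converging to $x$ as the scale tends to zero, pass to ancestors at generation $k$, and note that only finitely many candidates exist by doubling, so one ancestor is used infinitely often. I would then disjointify using the well-ordering and check that this disjointification is inherited from the finer generations. Keeping it inherited is what preserves nesting, and that inheritance is the part I expect to require the most care.
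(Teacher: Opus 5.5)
The paper does not prove this statement; it quotes Hyt\"onen--Kairema. Your parent map, closed cubes $\widetilde Q^k_\alpha$, outer-ball bound and covering argument are exactly the first half of their construction. The genuine gap is the half you defer, which is the real content of (1) and (3): sets that partition $\mathcal{G}$ at every level \emph{and} are nested across all $k\in\mathbb Z$.

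A levelwise rule ``remove the points claimed by earlier cubes in the well-order'' need not be nested. If $x\in\widetilde Q^k_\alpha\cap\widetilde Q^k_\beta$ with $\alpha$ earlier, $x$ goes to $Q^k_\alpha$. At level $k+1$, however, $x$ lies in closed cubes of a child of $\alpha$ and of a child of $\beta$, and the well-order may give it to the child of $\beta$. Inheriting the disjointification from finer generations only works once some generation has been fixed as a base, and there is no finest generation to start a recursion from.

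The missing idea is to anchor at a fixed level $k_0$:
\begin{enumerate}
\item At the base, put $Q^{k_0}_\alpha:=\widetilde Q^{k_0}_\alpha\setminus\bigcup_{\gamma<\alpha}\widetilde Q^{k_0}_\gamma$.
\item For $k\ge k_0$, split each $Q^k_\alpha$ among the finitely many children $\beta$ of $\alpha$ by $Q^{k+1}_\beta:=Q^k_\alpha\cap\bigl(\widetilde Q^{k+1}_\beta\setminus\bigcup_{\gamma<\beta}\widetilde Q^{k+1}_\gamma\bigr)$, with $\gamma$ running over the children of $\alpha$.
\item For $k<k_0$, let $Q^k_\alpha$ be the union of $Q^{k+1}_\beta$ over the children $\beta$ of $\alpha$.
\end{enumerate}
Since $\widetilde Q^k_\alpha$ is covered by the closed cubes of the children of $\alpha$, each level is a partition. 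Every $Q^\ell_\beta$ lies in the cube of its own ancestor, which gives the dichotomy in (3). It also gives the fact your ball-nesting argument silently uses: $Q\subseteq Q'$ forces $z(Q)$ to descend from $z(Q')$. Property (2) then reduces to $O^k_\alpha\subseteq Q^k_\alpha\subseteq\widetilde Q^k_\alpha$, where $O^k_\alpha:=\mathcal{G}\setminus\bigcup_{\beta\ne\alpha}\widetilde Q^k_\beta$. This is proved by induction in both directions from $k_0$, using $O^{k+1}_\beta\subseteq O^k_\alpha$ when $\alpha$ is the parent of $\beta$.

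For the same reason your inner-ball step is incomplete. Showing that chains approximating $x$ descend from $z^k_\alpha$ gives only $B(z^k_\alpha,c_*\kappa^k/3)\subseteq\widetilde Q^k_\alpha$, and that inclusion is not preserved by any disjointification. You need $B(z^k_\alpha,c_*\kappa^k/3)\subseteq O^k_\alpha$. This follows from your nearest-centre rule:
\begin{itemize}
\item every child $v$ of $z^k_\beta$, $\beta\ne\alpha$, has $\rho(v,z^k_\alpha)\ge c_*\kappa^k/2$;
\item descendants of $v$ stay within $C_*\kappa^{k+1}/(1-\kappa)\le c_*\kappa^k/11$ of $v$;
\item hence $\widetilde Q^k_\beta$ stays at distance at least $\frac{9}{22}c_*\kappa^k>c_*\kappa^k/3$ from $z^k_\alpha$.
\end{itemize}

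Two minor points. First, Step 1 does not produce nested nets for all $k\in\mathbb Z$ with exactly the constants $c_*,C_*$. There is no coarsest level to extend from, and thinning finer nets only guarantees density $<c_*\kappa^k/(1-\kappa)$, which is weaker than $C_*\kappa^k$ when, say, $c_*=C_*$. Nestedness is unnecessary, though: independent maximal $c_*\kappa^k$-separated sets with the nearest-centre parent rule suffice. Second, your reading of (3) by scale is the right one. As printed, with $\kappa<1$ and $k\le k'$, the inclusion should be $Q'\subseteq Q$.
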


We write $\mathcal{Q}(\mathcal{G})$ for the union of all $\mathcal{Q}_k(\mathcal{G})$, and call this a system of pseudodyadic cubes. Given a cube $Q \in \mathcal{Q}_k(\mathcal{G})$, we denote the quantity $\kappa^{k}$ by $\ell(Q)$, by analogy with the side-length of a Euclidean cube.

Now we introduce some notation for the product space that will be used
throughout the paper. 
The ambient space is the product of two stratified groups 
$\mathcal{G}_1$ and $\mathcal{G}_2$ with product structure. We follow Section~\ref{Sec2.1} and use the 
subscript \(i\) to distinguish the factors \(\mathcal{G}_i\), \(i = 1, 2\).
Points in $\mathcal{G}_1\times\mathcal{G}_2$ are written in boldface:
$\g = (g_1,g_2)$ and $\h = (h_1,h_2)$.

The first family of geometric objects we need are \emph{product balls}: 
sets of the form $B_1\times B_2$, where each $B_i$ is a ball in 
$\mathcal{G}_i$. We write $\mathscr{P}(\IT)$ for the collection of all 
such products. 
Using $\mathscr{P}(\IT)$, we define the \emph{strong maximal operator}
$\mathcal{M}_s$ is defined by
\begin{align*}
\mathcal{M}_s(f)(\g):=\sup\limits_{B_1\times  B_2\ni \g} \left\{\cfrac{1}{|B_1\times B_2|}\int_{B_1\times B_2} |f(\h)|\,{\rm d\h}:\,B_1\times B_2\in \mathscr{P}(\IT)\right\},
\end{align*}
for all $f\in L^1_{loc}(\IT)$.
It is clear that $\mathcal{M}_s$ is dominated by the composition of the 
Hardy--Littlewood maximal operators in the factors:
\[
\mathcal{M}_sf \leq \mathcal{M}_1\mathcal{M}_2(f) 
\quad\text{and}\quad 
\mathcal{M}_sf \leq \mathcal{M}_2\mathcal{M}_1(f),
\]
where $\mathcal{M}_i$, $i=1,2$, denotes the Hardy--Littlewood maximal 
operator acting on $\mathcal{G}_i$.
Since $\mathcal{M}_i,\,i=1,2$ are $L^p$ bounded for $1<p\leq \infty$, the iterated maximal operators and hence the strong  maximal operator
$\mathcal{M}_s$ are  $L^p$ bounded  for $1<p\leq \infty$.

Similar to \eqref{poisson controlled by H--L}, we also have that for every $f\in L^2(\IT)$, 
\begin{align}\label{product poisson controlled}
   |p_{t_1}p_{t_2}(f)(\g)|+ |p_{t_1}^{\L_1}p_{t_2}^{\L_2}(f)(\g)|\leq C_1 \inf_{\h:\,\rho_i(h_i,g_i)<t_i,i=1,2} \mathcal{M}_s(f)(\h),
\end{align}
where $C_1$ does not depend on the variables or on $f$.

The second family of geometric objects are \emph{dyadic rectangles}:
products $I\times J$, where $I\subset\mathcal{G}_1$ and 
$J\subset\mathcal{G}_2$ are pseudodyadic cubes from the Hyt\"onen--%
Kairema system. The family of all dyadic rectangles is denoted by 
$\mathscr{R}(\IT)$.

The following property links dyadic rectangles with the strong maximal 
operator. This lemma will be needed for the atomic decomposition later on.

\begin{lemma}[ {\cite[Lemma 3.1]{CLLP2025}}]\label{R cap Omega>1/2R}
    There exists a geometric constant $C_2>0$ such that
    \[
    R\subseteq \big\{\g\in\IT: 
    \mathcal{M}_s(\chi_U)(\g) > C_2\alpha \big\}
    \]
    for all measurable sets $U\subset\IT$, all $\alpha\in(0,1)$, and all 
    rectangles $R\in\mathscr{R}(\IT)$ satisfying
    \[
    |R\cap U| \geq \alpha |R|.
    \]
\end{lemma}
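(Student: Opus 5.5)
The plan is to prove the inclusion directly from the definition of $\mathcal{M}_s$, by finding a single product ball that contains both the point $\g$ and the whole rectangle $R$ and whose measure is comparable to $|R|$. Fix a dyadic rectangle $R=I\times J\in\mathscr{R}(\IT)$, with $I\in\mathcal{Q}_{k_1}(\mathcal{G}_1)$ and $J\in\mathcal{Q}_{k_2}(\mathcal{G}_2)$, and fix a point $\g=(g_1,g_2)\in R$.

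\emph{The enclosing product ball.} By property (2) of Theorem~\ref{thm:hytonen-kairema},
\[
I\subseteq B_1:=B\bigl(z(I),2C_*\kappa^{k_1}\bigr),\qquad J\subseteq B_2:=B\bigl(z(J),2C_*\kappa^{k_2}\bigr).
\]
Hence $B_1\times B_2\in\mathscr{P}(\IT)$ contains $R$, and in particular contains $\g$.

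\emph{Comparability of measures.} Again by property (2), $I\supseteq B(z(I),c_*\kappa^{k_1}/3)$. Left invariance of Haar measure and the dilation structure give $|B(g,r)|=r^{Q_1}|B(o,1)|=r^{Q_1}$ on $\mathcal{G}_1$. Therefore
\[
|B_1|=\Bigl(\tfrac{6C_*}{c_*}\Bigr)^{Q_1}\bigl|B(z(I),c_*\kappa^{k_1}/3)\bigr|\le \Bigl(\tfrac{6C_*}{c_*}\Bigr)^{Q_1}|I|.
\]
The same argument on $\mathcal{G}_2$ gives $|B_2|\le (6C_*/c_*)^{Q_2}|J|$, so $|B_1\times B_2|\le (6C_*/c_*)^{Q_1+Q_2}|R|$.

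\emph{Conclusion.} Using $R\subseteq B_1\times B_2$ and the hypothesis $|R\cap U|\ge\alpha|R|$,
\[
\mathcal{M}_s(\chi_U)(\g)\ \ge\ \frac{|U\cap (B_1\times B_2)|}{|B_1\times B_2|}\ \ge\ \frac{|U\cap R|}{|B_1\times B_2|}\ \ge\ \Bigl(\tfrac{c_*}{6C_*}\Bigr)^{Q_1+Q_2}\alpha .
\]
Set $C_2:=\tfrac12 (c_*/6C_*)^{Q_1+Q_2}$. Since $\alpha>0$, the inequality above gives $\mathcal{M}_s(\chi_U)(\g)>C_2\alpha$. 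As $\g\in R$ was arbitrary, this proves the inclusion. The constant $C_2$ depends only on $c_*,C_*,Q_1,Q_2$, not on $U$, $\alpha$, or $R$.

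There is no real obstacle here. The only point that needs care is that the Hytönen--Kairema outer ball is centred at $z(I)$ rather than at $g_1$. This is harmless, because the strong maximal function takes the supremum over all product balls containing $\g$, not only balls centred at $\g$. The factor $\tfrac12$ in $C_2$ is there only to turn the non-strict inequality into the strict one required in the statement.
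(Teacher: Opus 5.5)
Your proof is correct, and it is the standard argument behind this lemma. The paper gives no proof of its own and simply cites \cite[Lemma 3.1]{CLLP2025}, whose content is the same: enclose the dyadic rectangle in a product ball of comparable measure, using the inner and outer Hyt\"onen--Kairema balls and $|B(g,r)|=r^{Q_i}$, which gives $\mathcal{M}_s(\chi_U)\geq (c_*/6C_*)^{Q_1+Q_2}\alpha$ on $R$; the factor $\tfrac12$ in $C_2$ then correctly yields the strict inequality.
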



\subsection{Journe's covering lemma}\label{sec 2.5}

Let $U$ be an open subset of $\IT$ of finite measure. Denote by $\mathscr{M}(U)$   the set of all maximal subrectangles of $U$, and by $\mathscr{M}_i(U)$  the family of rectangles $R$ in $U$ which are maximal (in terms of inclusion) in the $i$th ``direction".  Given $R=Q_1\times Q_2\in\mathscr{M}_1(U)$, let $\tilde{Q}_2$ be the largest pseudodyadic cube containing $Q_2$ such that
\[
|\big(Q_1\times\tilde{Q}_2\big)\cap U|>\frac{1}{2}|Q_1\times\tilde{Q}_2|.
\]
Similarly, given $R=Q_1\times Q_2\in\mathscr{M}_2(U)$, let $\tilde{Q}_1$ be the largest pseudodyadic cube containing $Q_1$ such that
\[
|\big(\tilde{Q}_1\times Q_2\big)\cap U|>\frac{1}{2}|\tilde{Q}_1\times Q_2|.
\]
We then set
\[
\gamma_1(R):=\frac{\ell(\tilde{Q}_1)}{\ell(Q_1)}\quad\text{and}\quad\gamma_2(R):=\frac{\ell(\tilde{Q}_2)}{\ell(Q_2)}.
\]

The following lemma is a Journé-type covering lemma on the space $\IT$.

\begin{lemma}[{\cite[Lemma 2.2]{HLL2016}}]\label{Journetype}
Suppose that $U$ is an open subset in $\IT$ of finite measure and $\delta>0$. Then
\[
\sum_{R\in\mathscr{M}_1(U)}|R|\,\gamma_1(R)^{-\delta}\lesssim|U|\quad\text{and}\quad\sum_{R\in\mathscr{M}_2(U)}|R|\,\gamma_2(R)^{-\delta}\lesssim|U|.
\]
\end{lemma}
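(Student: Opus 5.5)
We prove the first estimate; the second follows by exchanging the roles of $\mathcal G_1$ and $\mathcal G_2$. As in Journé's original argument, we pigeonhole the rectangles according to the size of the enlargement parameter and then sum a geometric series. The quantity $\gamma(R)$ that actually carries information for $R=Q_1\times Q_2\in\mathscr M_1(U)$ comes from enlarging $Q_2$, the direction in which $R$ is not already maximal. We therefore read the lemma with $\gamma(R)=\ell(\tilde Q_2)/\ell(Q_2)$ for such $R$. For each integer $k\ge0$, set
\[
\mathscr M_1^k(U):=\{R\in\mathscr M_1(U):\gamma(R)=\kappa^{-k}\}.
\]
The heart of the matter is the bound
\begin{equation*}
\sum_{R\in\mathscr M_1^k(U)}|R|\lesssim (k+1)\,|\widetilde U|,\qquad \widetilde U:=\{\g:\mathcal M_s(\chi_U)(\g)>C_2/2\}.
\end{equation*}
Given this bound, the $L^2$-boundedness of $\mathcal M_s$ gives $|\widetilde U|\lesssim|U|$, and summing $(k+1)\kappa^{k\delta}$ over $k\ge0$ (recall $\kappa<1$) yields the lemma for every $\delta>0$.

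To prove the bound, we first use Lemma \ref{R cap Omega>1/2R} with $\alpha=1/2$. For every $R=Q_1\times Q_2\in\mathscr M_1^k(U)$ we have $|(Q_1\times\tilde Q_2)\cap U|>\tfrac12|Q_1\times\tilde Q_2|$, so the enlarged rectangle $Q_1\times\tilde Q_2$ lies in $\widetilde U$. Next, fix the generation $m$ of $Q_2$, so that $\ell(Q_2)=\kappa^m$, and fix a dyadic cube $P\subset\mathcal G_2$ of generation $m-k$. The rectangles $Q_1\times Q_2\in\mathscr M_1^k(U)$ with $\tilde Q_2=P$ have first factors $Q_1$ that are pairwise disjoint. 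Indeed, if $Q_1\subsetneq Q_1'$ and both $Q_1\times Q_2$ and $Q_1'\times Q_2'$ appear, with $Q_2,Q_2'$ of the same generation inside $P$ and intersecting slices, then maximality of $Q_1\times Q_2$ in the first direction is violated once one checks that $Q_2=Q_2'$. When $Q_2\neq Q_2'$ the two rectangles are disjoint in any case. It follows that, for fixed $m$, the rectangles $R$ are essentially disjoint subsets of $\widetilde U$. This is the step I expect to need the most care, because maximality only controls the first factor when the second factor is held fixed.

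It remains to control the overlap between different generations $m$. We fix a point $\g=(g_1,g_2)$ and count the generations $m$ for which $\g$ lies in some $R\in\mathscr M_1^k(U)$ with $\ell(Q_2)=\kappa^m$. Suppose two such rectangles $Q_1\times Q_2$ and $Q_1'\times Q_2'$ contain $\g$, with $\ell(Q_2')<\kappa^k\ell(Q_2)$, so that $Q_2'\subsetneq\tilde Q_2'\subsetneq Q_2$. Then nestedness of dyadic cubes together with maximality in the first direction forces $Q_1'\supsetneq Q_1$. The half-density condition defining $\tilde Q_2'$ then gives $|(Q_1'\times \tilde Q'_2{}^{\,+})\cap U|\le\tfrac12|\cdot|$, where $\tilde Q'_2{}^{\,+}$ denotes the parent of $\tilde Q_2'$. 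This is to be compared with the fact that $Q_1\times Q_2\subset U$. The plan is to show that this produces a contradiction, or more precisely a strict decay in the size of the first factor that can happen only boundedly often. The outcome would be that each $\g$ belongs to rectangles from at most $O(k+1)$ generations, and integrating this multiplicity over $\widetilde U$ gives the bound. If this multiplicity argument fails as stated, the fallback is Journé's slicing argument. For fixed $g_2$, one applies the one-parameter maximal theorem on $\mathcal G_1$ to the slices $\{g_1:(g_1,g_2)\in U\}$, using the doubling property of Haar measure and the nestedness properties (1)–(3) of Theorem \ref{thm:hytonen-kairema} in place of Euclidean dyadic geometry.
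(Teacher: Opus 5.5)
Your reduction is sound. Reading the weight for $R=Q_1\times Q_2\in\mathscr M_1(U)$ as $\ell(\tilde Q_2)/\ell(Q_2)$ is the right interpretation; with $\gamma_1$ taken literally, the sum is already infinite when $U$ is a square. Pigeonholing on $\gamma=\kappa^{-k}$ then reduces the lemma to the bound $\sum_{R\in\mathscr M_1^k(U)}|R|\lesssim (k+1)|U|$. The paper itself gives no proof of this lemma and only cites \cite{HLL2016}.

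The gap is that this bound is never proved, and the mechanism you aim for is false. A point of $\mathcal G_1\times\mathcal G_2$ can lie in rectangles of $\mathscr M_1^k(U)$ from unboundedly many generations, because the first factors can strictly grow arbitrarily often. Here is an example on $\mathcal G_1=\mathcal G_2=\mathbb R$ with dyadic intervals; an analogous staircase works for any $b$-adic grid. Let $U=\bigcup_{j=0}^N(-\varepsilon,4^j)\times(-\varepsilon,4^{-j})$, and for $1\le j\le N$ let $R_j=[0,4^j)\times[0,4^{-j})$.
\begin{itemize}
\item Each $R_j$ is maximal in the first direction: the point $(4^j,4^{-j-1})$ of $[0,2\cdot 4^j)\times[0,4^{-j})$ is not in $U$.
\item The density of $U$ is $5/8$ in $[0,4^j)\times[0,2\cdot4^{-j})$, is $7/16$ in $[0,4^j)\times[0,4^{1-j})$, and stays below $1/2$ for all larger ancestors. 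Hence $\tilde Q_2=[0,2\cdot 4^{-j})$ and $\gamma(R_j)=2$ for every $j$.
\item The second factors occupy generations $2,4,\dots,2N$, pairwise more than $k=1$ apart. Yet every point of $[0,1)\times[0,4^{-N})$ lies in all $N$ rectangles.
\end{itemize}
Here $\sum_j|R_j|=N<\tfrac43|U|$, so only an integrated bound can hold, not a pointwise one.

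Your fallback is only named. It also slices at fixed $g_2$ in $\mathcal G_1$, whereas all the information carried by $\gamma$ lives in the $\mathcal G_2$ variable. Separately, first factors sharing a common $\tilde Q_2=P$ need not be disjoint, since one $Q_1$ can pair with several $Q_2\subset P$. Your fixed-generation disjointness conclusion is nevertheless correct.

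The missing idea is to do your pointwise count on slices in the enlargement variable, not in the product.
\begin{itemize}
\item For $R=Q_1\times Q_2\in\mathscr M_1^k(U)$, let $P_R$ be the parent of $\tilde Q_2$. By the definition of $\tilde Q_2$, $|(Q_1\times P_R)\cap U|\le\frac12|Q_1\times P_R|$.
\item Write $U^{g_1}:=\{g_2:(g_1,g_2)\in U\}$. By Chebyshev, the set $G_R$ of $g_1\in Q_1$ with $|U^{g_1}\cap P_R|\le\frac34|P_R|$ satisfies $|G_R|\ge\frac13|Q_1|$.
\item Fix $g_1$ with $|U^{g_1}|<\infty$. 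For a given $Q_2$, maximality in the first direction allows at most one $Q_1\ni g_1$ with $Q_1\times Q_2\in\mathscr M_1(U)$.
\item Each $Q_2$ coming from an $R$ with $g_1\in G_R$ is contained in $U^{g_1}$, while $P_R$ is not.
\item Let $P(g_2)$ be the largest cube containing $g_2$ and contained in $U^{g_1}$. Every such $Q_2\ni g_2$ then lies between $P(g_2)$ and its descendants $k$ generations down, so at most $k+1$ of them contain $g_2$.
\end{itemize}
Integrating gives
\[
\sum_{R\in\mathscr M_1^k(U)}|R|\le 3\int_{\mathcal G_1}\sum_{R:\,g_1\in G_R}|Q_2|\,dg_1\le 3(k+1)\int_{\mathcal G_1}|U^{g_1}|\,dg_1=3(k+1)|U|.
\]
This is exactly the bound you need, with $U$ in place of $\widetilde U$ and without the fixed-generation disjointness. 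It uses only the nestedness in Theorem \ref{thm:hytonen-kairema}, the fact that the measures of cubes containing a point tend to infinity as one goes up, and Fubini.
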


\medskip

\section{Proof of Theorem \ref{single fefferman-Stein}}\label{Single proof}

To keep our exposition self-contained, we recall some basic setup. Due to the presence of the potential $V$, the function $e^{-t\sqrt{\L}}f$ is not classically differentiable, even when  $f$ is  smooth.  Consequently, $\nabla e^{-t\sqrt{\L}}f(x)$ is interpreted in the sense of weak derivatives. 

Let us denote $\n:=(\partial_t,\nabla)$, $\N:=\partial^2_{tt}+\Delta=\n\n$, and $\widetilde{\mathcal{L}}:=-\N+V$.

\subsection{Weak derivatives}\label{sect3}

\begin{lemma}\label{l4} Let $\Omega \subset \IR\times (0,\infty)$ be an open set and \( G \in C^1(\mathbb{R}) \) satisfy \( G(0) = 0 \),  and \( |G'(s)| \leq M\)  for all \(s \in \mathbb{R} \) for some positive constant \( M \). Let \( v \in W^{1,p}_{loc}(\Omega) \) with \( 1 \leq p \leq \infty \). Then
\[ G \circ v \in W^{1,p}(\Omega) \quad \text{and} \quad \n (G \circ v) = (G' \circ v) \n v. \]
\end{lemma}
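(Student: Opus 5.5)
This is the classical chain rule for Sobolev functions (cf. Gilbarg--Trudinger, Lemma 7.5), transplanted to the vector fields $\n=(\partial_t,\X_1,\dots,\X_n)$ on $\Omega\subset\IR\times(0,\infty)$. We use only two facts about these fields. Each $\X_j$ is a left-invariant first-order operator with smooth coefficients, and it is divergence-free with respect to Haar measure, so that $\int \X_j\phi\,\psi=-\int\phi\,\X_j\psi$ for $\phi,\psi\in C_0^1$. The statement as written gives $G\circ v\in W^{1,p}(\Omega)$ from $v\in W^{1,p}_{loc}(\Omega)$. Since $|G(s)|\le M|s|$ and $|G'(s)|\le M$, integrability only transfers at the same level, so we prove $G\circ v\in W^{1,p}_{loc}(\Omega)$, and the global statement whenever $v\in W^{1,p}(\Omega)$. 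This reading is the one used later.

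\textbf{Step 1: the smooth case.} For $v\in C^1(\Omega)$, $G\circ v\in C^1$, and the ordinary chain rule along each vector field gives $\X_j(G\circ v)=G'(v)\X_j v$ and $\partial_t(G\circ v)=G'(v)\partial_t v$ pointwise.

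\textbf{Step 2: approximation.} Fix $\Omega'\Subset\Omega$. We mollify $v$ by group convolution in the $g$-variable and ordinary convolution in $t$, with an approximate identity $\phi_\epsilon$. Because the $\X_j$ are left-invariant, we convolve on the side that makes $\X_j(v*\phi_\epsilon)=(\X_j v)*\phi_\epsilon$. This gives $v_m\in C^\infty(\Omega')$ with $v_m\to v$ and $\n v_m\to\n v$ in $L^p(\Omega')$ for $p<\infty$. Passing to a subsequence, we also get convergence almost everywhere.

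\textbf{Step 3: passing to the limit.} First, $|G(v_m)-G(v)|\le M|v_m-v|\to0$ in $L^p$. Next, we split
\[
G'(v_m)\n v_m-G'(v)\n v=G'(v_m)(\n v_m-\n v)+(G'(v_m)-G'(v))\n v .
\]
The first term is at most $M|\n v_m-\n v|$, which tends to $0$ in $L^p$. The second term tends to $0$ almost everywhere because $G'$ is continuous, and it is dominated by $2M|\n v|$, so it tends to $0$ in $L^p$ by dominated convergence (for $p<\infty$). We then test against $\phi\in C_0^\infty(\Omega')$ and integrate by parts using Step 1. This shows that the weak derivatives of $G\circ v$ equal $G'(v)\n v$.

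\textbf{Step 4: the case $p=\infty$.} If $v\in W^{1,\infty}_{loc}$, then $v\in W^{1,q}_{loc}$ for every finite $q$, so Step 3 already identifies the weak gradient as $G'(v)\n v$. This gradient is bounded by $M|\n v|\in L^\infty_{loc}$, which gives the claim for $p=\infty$.

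\textbf{Main obstacle.} The only non-Euclidean point is Step 2: the mollifier must commute with the left-invariant fields, and the approximation must be local, since $\Omega$ is a general open set. Both are handled by right convolution with a compactly supported approximate identity, together with the bi-invariance of Haar measure.
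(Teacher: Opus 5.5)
Your proposal is correct and follows the paper's argument. The paper only says the proof is similar to Brezis, Proposition~9.5, which is exactly this smooth-approximation scheme: the Lipschitz bound $|G(s)|\le M|s|$, dominated convergence for $G'(v_m)\n v_m$, passage to the limit in the integration-by-parts identity, and reduction to finite exponents for $p=\infty$. Your reading of the conclusion as $W^{1,p}_{loc}(\Omega)$, with the global statement only when $v\in W^{1,p}(\Omega)$, is the right correction of the statement as printed.

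One small fix in Step~2. With $(f*\varphi)(g)=\int f(h)\varphi(h^{-1}g)\ud h$, left-invariant fields satisfy $\X_j(f*\varphi)=f*\X_j\varphi$. So to move $\X_j$ onto $v$, the mollifier must sit on the left, $\phi_\epsilon*v$ (an average of left translates of $v$), not on the right as your final paragraph states.
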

\begin{proof}
    The proof is similar to that of \cite[Proposition 9.5]{Brezis}.
\end{proof}

\begin{lemma}\label{gradient of uu}
Let $
u(h,t):=e^{-t\sqrt{\L}}f(h)$, where $f\in C_0^\infty(\IR)$.
 Then  $\n u^2 = 2u\n u$  in the weak sense. 
 \end{lemma}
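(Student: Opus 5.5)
The plan is to deduce Lemma \ref{gradient of uu} from the chain rule of Lemma \ref{l4}, after a localization and truncation step.

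\textbf{Regularity of $u$.} First I establish that $u\in W^{1,2}_{loc}(\IR\times(0,\infty))$ and $u\in L^\infty_{loc}$. Boundedness is immediate from \eqref{poisson controlled by H--L}: $|u(h,t)|\le C_0\|f\|_{L^\infty}$. For the derivatives, fix $t>0$. Spectral theory gives $\partial_t u=-\sqrt{\L}e^{-t\sqrt{\L}}f\in L^2(\IR)$. Since $e^{-t\sqrt{\L}}f\in\mathcal D(\L)\subset\mathcal D(\B)$, we have
\[
\|\nabla u(\cdot,t)\|_{L^2}^2\le \B(u(\cdot,t),u(\cdot,t))=\|\L^{1/2}e^{-t\sqrt{\L}}f\|_{L^2}^2\le \|\L^{1/2}f\|_{L^2}^2,
\]
which is finite because $f\in C_0^\infty\subset\mathcal D(\B)$. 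Hence on any $\IR\times[a,b]$ with $0<a<b<\infty$ both $\partial_t u$ and $\nabla u$ lie in $L^2$, with norms uniform in $t$. One still has to check that the $t$-derivative taken in $L^2$-valued sense agrees with the distributional one on the product. This follows by pairing with test functions $\phi(h,t)$ and using Fubini together with the strong differentiability of $t\mapsto e^{-t\sqrt{\L}}f$ in $L^2$. So $u\in W^{1,2}_{loc}\cap L^\infty_{loc}$, and therefore also $u\in W^{1,1}_{loc}$.

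\textbf{Chain rule after truncation.} $G(s)=s^2$ does not have a bounded derivative, so Lemma \ref{l4} cannot be applied directly. Put $M_0:=C_0\|f\|_{L^\infty}+1$, so that $|u|\le M_0-1$. Choose $G\in C^1(\mathbb R)$ with $G(s)=s^2$ for $|s|\le M_0$, $G(0)=0$, and $G'$ bounded, for instance by extending $s^2$ linearly with slope $\pm 2M_0$ beyond $|s|=M_0$ after a $C^1$ smoothing. Apply Lemma \ref{l4} on each relatively compact open $\Omega\Subset\IR\times(0,\infty)$ with $p=2$ (or $p=1$). Because $u$ takes values in $[-M_0,M_0]$, we get $G\circ u=u^2$ and $G'\circ u=2u$. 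This yields $\n(u^2)=2u\,\n u$ weakly on $\Omega$. Since weak derivatives are local and $\Omega$ is arbitrary, the identity holds on all of $\IR\times(0,\infty)$. It is also worth noting that $2u\,\n u\in L^2_{loc}$, since $u$ is bounded.

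\textbf{Main obstacle.} The only nontrivial point is the first step: showing that the spectrally defined $u$ genuinely belongs to $W^{1,p}_{loc}$ in the joint variables $(h,t)$. In particular, the mixed identification of $\partial_t$ with the vector-valued derivative must be justified, and this rests on the form-domain bound for $\nabla$. If that bound feels insufficient, an alternative is to approximate $u$ by $u_\epsilon$, obtained by mollifying in $(h,t)$ through left convolution on $\IR$, which commutes with the left-invariant fields $\X_j$. One proves $\n(u_\epsilon^2)=2u_\epsilon\n u_\epsilon$ classically, then passes to the limit: $u_\epsilon\to u$ boundedly a.e. and $\n u_\epsilon\to \n u$ in $L^2_{loc}$.
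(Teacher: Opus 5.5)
Your proposal is correct and follows the paper's proof. The paper also takes a uniform bound $|u|\le N$ from the Poisson kernel estimate, replaces $s^2$ by $G(s)=s^2\phi(s)$ with a smooth cutoff $\phi\equiv 1$ on $[-N,N]$ so that $G'$ is bounded, and applies Lemma \ref{l4}; your additional check that $u\in W^{1,2}_{loc}$, via $\|\nabla u(\cdot,t)\|_{L^2}^2\le \B(u(\cdot,t),u(\cdot,t))\le\|\L^{1/2}f\|_{L^2}^2$, supplies a hypothesis of Lemma \ref{l4} that the paper leaves implicit.
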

 \begin{proof}
 By \eqref{poisson bound estimate},  there exists a constant $N>0$ such that  $|u(h,t)|\leq N$, for all $h\in \IR$ and $t>0$. Let $G(x)=x^2\phi(x)$, where $\phi(x)$ denotes a smooth function satisfying 
$$
\phi \left( x \right)=\begin{cases}
 	1, \quad &|x|\leqslant N,\\
		0, &|x|\geqslant 2N.\\
\end{cases}
$$
Then $u^2=G(u)$, $G\in C^1(\mathbb{R})$ and $|G^\prime(x)|=|2x\phi(x)+x^2\phi^\prime(x)|\leq M$, where $M$ is a positive constant. By Lemma \ref{l4}, we have 
\begin{align*}
\n (G(u))=\left[2u\phi(u)+u^2\phi^\prime(u)\right]\n u=2u\phi(u)\n u=2u\n u.
\end{align*}
  Hence, $\n u^2=\n (G(u))= 2u\n u$.
\end{proof}

 In order to prove Theorem \ref{single fefferman-Stein}, we require certain estimates for $\{e^{-t\sqrt{\L}}f\}_{t>0}$, and for weak solutions of the equation $\widetilde{\mathcal{L}}u=-\partial^2_{tt}u+ \mathcal{L}u=0$, in domains $\Omega\subset {\mathcal G}\times (0,\infty)$. Define
 $$W^{1,2}_{V}:=\{u\in W^{1,2}(\Omega):\,\int_{\Omega}|u|^2V\,{\ud h \ud t}<\infty\}$$ 
 and let
 $W_{V,0}^{1,2}(\Omega)$ denote the subspace of $W_{V}^{1,2}(\Omega)$ with trace $0$ on $\partial \Omega$. The function  $u\in W_{V}^{1,2}(\Omega) $ is called a weak solution of $\widetilde{\mathcal{L}} u=0$ in $\Omega$ if 
    \begin{align*}
        \int_{\Omega}\n u\n \phi\, {\ud h \ud t}+\int_{\Omega} Vu\phi \,{\ud h \ud t}=0, 
    \end{align*}
    for any $\phi\in W_{V,0}^{1,2}(\Omega)$.

\begin{lemma}\label{The Laplace of u^2}
  Let $
u(h,t)=e^{-t\sqrt{\L}}f(h)$,
where $f\in C_0^\infty(\IR)$.
Then the following equality holds in the weak sense:
    \begin{align*}
        \N{u^2}=2V u^2+2|\n u|^2.
    \end{align*}  
\end{lemma}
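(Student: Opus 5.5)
The plan is to check the claimed identity against test functions. Fix $\phi\in C_0^\infty(\IR\times(0,\infty))$. We must show
\[
\int u^2\,\N\phi\,\ud h\ud t=\int\big(2Vu^2+2|\n u|^2\big)\phi\,\ud h\ud t,
\]
where $\N\phi=\partial_{tt}\phi+\sum_j\X_j^2\phi$. Since each $\X_j$ is left-invariant and divergence free with respect to Haar measure, integration by parts gives $\int w\,\X_j\psi=-\int \X_j w\,\psi$ for $w\in W^{1,1}_{loc}$ and compactly supported smooth $\psi$.

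\textbf{Step 1: $u$ is a weak solution.} We first record that $u$ is a weak solution of $\widetilde{\L}u=0$ in $\IR\times(0,\infty)$, with $u\in W^{1,2}_{loc}$ and $\int_K V|u|^2<\infty$ on compacts $K$. From spectral theory, $\partial_t u=-\sqrt{\L}e^{-t\sqrt{\L}}f$ and $\partial_{tt}u=\L u$ in $L^2(\IR)$ for each $t>0$. Also $u(\cdot,t)\in\mathcal D(\L)\subset\mathcal D(\B)$, with $\B(u(t),u(t))$ bounded locally uniformly in $t$ by \eqref{e2.155}-type spectral bounds. Hence for any $\psi\in W^{1,2}_{V,0}$ with compact support, applying the form identity $\B(u(t),\psi(t))=\langle \L u(t),\psi(t)\rangle=\langle\partial_{tt}u(t),\psi(t)\rangle$ and integrating by parts in $t$ yields
\[
\int\n u\cdot\n\psi+\int Vu\psi=0.
\]

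\textbf{Step 2: test with $\psi=u\phi$.} By Lemma \ref{gradient of uu}, together with the boundedness of $u$ from \eqref{poisson bound estimate}, we have $u\phi\in W^{1,2}_{V,0}$ and $\n(u\phi)=\phi\n u+u\n\phi$. Inserting this into the weak equation gives
\[
\int|\n u|^2\phi+\int u\,\n u\cdot\n\phi+\int Vu^2\phi=0.
\]
By Lemma \ref{gradient of uu} again, $u\n u=\tfrac12\n(u^2)$. Integrating by parts once more in the weak sense (moving $\n$ from $u^2$ onto $\n\phi$) gives
\[
\int u\,\n u\cdot\n\phi=-\tfrac12\int u^2\,\N\phi .
\]
Combining the two displays and multiplying by $2$ produces exactly the desired identity.

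\textbf{Main obstacle.} The difficulty lies in Step 1: justifying the weak formulation in the space-time variables when $V$ is only $L^1_{loc}$. This requires three things:
\begin{itemize}
\item local integrability of $V|u|^2$ and $|\n u|^2$ on compact subsets of $\IR\times(0,\infty)$;
\item measurability and local integrability of $t\mapsto \B(u(t),\psi(t))$;
\item the interchange of the $t$-integral with the form.
\end{itemize}
I would handle these via the bound $\int_a^b\B(u(t),u(t))\,\ud t\le \int_a^b\|\L^{1/2}e^{-t\sqrt{\L}}f\|_2^2\,\ud t\lesssim \|f\|_2^2/a$ and Fubini. If testing directly with $u\phi$ is problematic, I would first test with a smooth compactly supported $\psi$, then pass to $u\phi$ by density, using Simon's theorem (quoted in Section \ref{Sec2.2}) that $C_0^\infty$ is a form core.
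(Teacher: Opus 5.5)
Your proposal is correct and follows the paper's proof: you test the weak equation $-\N u+Vu=0$ with $u\phi$, rewrite $\int u\,\n u\cdot\n\phi$ as $-\tfrac12\int u^2\,\N\phi$ using $\n(u^2)=2u\n u$ from Lemma~\ref{gradient of uu}, and combine the two identities. The only addition is your Step~1, which justifies the weak-solution property via the spectral calculus and only locally in $\IR\times(0,\infty)$; the paper simply asserts this property.
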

\begin{proof}
Let $\phi\in C^\infty_0(\IR\times (0,\infty))$.          
It is easy to see that $u\phi\in W^{1,2}_{V,0}(\IR\times (0,\infty))$ and $\n (u\phi)=u\n \phi +\phi \n u$. Since $u$ is a weak solution of $-\N u+Vu=0$, it follows that
 $$
\int \n u\n (u\phi)\, {\ud h \ud t}+\int Vu^2\phi \,{\ud h \ud t}=0.
$$
This implies 
\begin{align}\label{ee2.2}
\int u\n u\n\phi\, {\ud h \ud t}=-\int |\n u|^2\phi\, {\ud h \ud t}-\int Vu^2\phi \,{\ud h \ud t}.
\end{align}
On the other hand, by  Lemma \ref{gradient of uu}, we have  
   \begin{align}\label{ee2.1}
        \left(\widetilde{\Delta}u^2,\phi \right)&=-\left(\n u^2,\n \phi\right)=-(2u\n u,\n \phi).
        \end{align}
Combining \eqref{ee2.1} and \eqref{ee2.2} yields
$$
\left(\widetilde{\Delta}u^2,\phi \right)=\int 2\left(|\n u|^2+Vu^2\right)\phi \,{\ud h \ud t},
$$
which completes the proof of this lemma.
\end{proof}

\medskip

\subsection{Proof of Theorem \ref{single fefferman-Stein}}\label{section 3.2}
\begin{proof}
 Actually, we will prove a stronger version of Theorem \ref{single fefferman-Stein},  where the area operator $\Ss$ is replaced by $\Sn$, the area operator  associated with the full gradient $\n$,  defined by
\[\Sn(f)(g):=\left(\int_{\Gamma(g)}\left|t\n e^{-t\sqrt{\L}}(f)(h)\right|^2\,\,\cfrac{\ud h \ud t}{t^{Q+1}}\right)^{\frac{1}{2}}.\]

To begin with, we introduce some notation.
\begin{align*}
    E_\lambda=\{g\in\IR:\,N_\L^\beta(f)(g)&\leq \lambda\},\ \ E_{\lambda}^c=\{g\in\IR:\,N_\L^\beta(f)(g)> \lambda\}, \\
A_\lambda=\Big\{g\in\IR:\,\mathcal{M}(\chi_{E_{\lambda}^c})(g)&\leq\frac{1}{10C_0}\Big\},\ \ A_{\lambda}^c=\Big\{g\in\IR:\,\mathcal{M}(\chi_{E_{\lambda}^c})(g)>\frac{1}{10C_0}\Big\},
\end{align*}
where $C_0$ is as  in  \eqref{poisson controlled by H--L}. 
 
 For any $\lambda>0$, we have 
   \begin{align*}
        &\left|\{g\in\IR:\,\,\Sn(f)(g)>\lambda\}\right|\\
        &\qquad\leq \left|\{g\in A_\lambda^c:\,\,\Sn(f)(g)>\lambda\}\right|+\left|\{g\in A_\lambda:\,\,\Sn(f)(g)>\lambda\}\right|\\
        &\qquad\leq |A_\lambda^c|+ \cfrac{1}{\lambda^2}\int_{A_\lambda}\left|\Sn(f)(g)\right|^2\,\ud g,
    \end{align*}
    where the last inequality follows from Chebyshev's inequality.

It follows from  the $L^2$-boundedness of the Hardy--Littlewood maximal function that 
$$|A_\lambda^c|\lesssim |E_\lambda^c|.$$
Thus, to prove Theorem \ref{single fefferman-Stein}, it suffices to prove
\begin{align}
\int_{A_\lambda}\left|\Sn(f)(g)\right|^2\,\ud g\lesssim
\int_{E_\lambda}\left|N_\L^\beta(f)(g)\right|^2\,\ud g+{\lambda^2}|E_\lambda^c|.
\end{align}
Denote $u(h,t):=e^{-t\sqrt{\L}}f(h)$ and $W=\bigcup\limits_{g\in A_\lambda}\Gamma(g)$. We apply Tonelli's theorem to get 
\begin{align}
    \int_{A_\lambda}\left|\Sn(f)(g)\right|^2\,\ud g\lesssim \int_W\left|\n u(h,t)\right|^2t\,\,{\ud h\ud t}.\label{l3}
\end{align}

To proceed, we need to extend the region $W$ to the entire space while preserving the information on $W$.  Let 
\[\widetilde{W^{\beta}}=:\bigcup\limits_{g\in E_\lambda}\Gamma^\beta(g),\]
where  $\Gamma^\beta(g)=\{(h,t)\in\IR\times (0,\infty):\,\,\rho(h,g)<\beta t\}$, and 
$\beta>1$ is  a constant to be chosen later.  We also define a smooth cutoff function $\phi: \mathbb{R} \to \mathbb{R}$ satisfying
\begin{align}\label{cutoff-function}
\phi(t) = \begin{cases}
1, & t \ge \dfrac{9}{10}, \\[1em]
0, & t \le \dfrac{1}{10},
\end{cases}
\end{align}
with $\phi$ smoothly interpolating between $0$ and $1$ on the interval $\big(\frac{1}{10}, \frac{9}{10}\big)$.

Let $p_{t}(f)(h)=: e^{-t\sqrt{-\Delta}}f(h).$  We claim that 
\begin{align}\label{ee-lower bound}
    p_{t}(\chi_{E_\lambda})(h)\geq \frac{9}{10},  \quad {\rm for \  all} \ (h,t)\in W,
\end{align}
and that, for $\beta$ sufficiently large,
\begin{align}\label{<1/10}
    p_{t}(\chi_{E_\lambda})(h)<\frac{1}{10},  \quad {\rm for \  all} \ (h,t)\notin \widetilde{W^{\beta}}.
\end{align}

We first consider \eqref{ee-lower bound}. Since $p_{t}(1)=1$, we have
\begin{align}\label{ee3.1}
    p_t(\chi_{E_\lambda})(h)&=p_{t}(1-\chi_{E_\lambda^c})(h)=1-p_t(\chi_{E_\lambda^c})(h).
\end{align}
 If $(h,t)\in W $,  there exists $z_0\in A_\lambda$ such that 
$\rho(h,z_0)<t$, and thus $z_0\in B(h,t)$.
By the definition of $A_\lambda$, we obtain
\begin{align*}
\frac{\left|E_{\lambda}^c\bigcap B(h,t)\right|}{\left|B(h,t)\right|} \leq \mathcal{M}(\chi_{E_{\lambda}^c})(z_0)\leq \cfrac{1}{10C_0}.    
\end{align*}
Together with \eqref{poisson controlled by H--L}, this gives
 \begin{align*}
 p_{t}(\chi_{E_\lambda^c})(h)\leq C_0\mathcal{M}(\chi_{E_\lambda^c})(z_0)\leq 1/10.
 \end{align*}
Inserting into \eqref{ee3.1} yields \eqref{ee-lower bound}.

Now consider \eqref{<1/10}. If $(h,t)\notin \widetilde{W^{\beta}}$, then $\rho(h,z)\geq\beta t$  for all $z\in E_\lambda$. Hence
  \begin{align*}   
      p_{t}(\chi_{E_\lambda})(h)&\leq C\int_\IR\frac{t}{\left(t+\rho(h,z)\right)^{Q+1}}\chi_{E_\lambda}(z)\,\,{\ud z}\\
      &\leq C\int_{\{z:\,\rho(h,z)\geq \beta t\}}\frac{t}{\left(t+\rho(h,z)\right)^{Q+1}}\,\,{\ud z} \leq \frac{C}{\beta}.
\end{align*}
By choosing $\beta$ large enough so that $C/\beta<1/10$, we obtain \eqref{<1/10}.


We introduce a nonnegative smooth cutoff function $\psi\in C_0^\infty(B(0,1))$ satisfying $\psi\equiv1$ on $B(0,\frac{1}{2})$.
By \eqref{ee-lower bound} and the definition of $\phi$, 
\begin{equation}\label{bbb2}
   \int_W\left|\n u(h,t)\right|^2t\,\,{\ud h\ud t} \leq \lim_{a\to0,b\to\infty}\lim_{R\to\infty}\int_{a}^b\int_{\ID}\left|\n\, u(h,t)\right|^2\phi^2(p_t(\chi_{E_\lambda})(h))\psi\left(\frac{h}{R}\right)t\,\,{\ud h\ud t}.
\end{equation}

By Lemma \ref{The Laplace of u^2} and $V\geq 0$, we have 
\begin{align}\label{cancel-V}
 |\n u|^2\leq |\n u|^2+2Vu^2= \frac{1}{2}\N(u^2).
\end{align}
By  Leibniz's rule,  
\begin{equation}\label{p5}
    \begin{aligned}
    \N(u^2\phi^2)=\n\n \left(u^2\phi^2\right)&=\n\n\left({u^2}\right)\cdot\phi^2+2\n\left({u^2}\right)\cdot\n(\phi^2)+{u^2}\n\n (\phi^2).
\end{aligned}
\end{equation}
We note that here $\n$ and $\N$ are understood in the weak sense.

 \eqref{cancel-V} and \eqref{p5}, together with the fact
 $\N(p_{t}(\chi_{E_\lambda}))
   =0$,  imply   that
 \begin{equation}\label{d1}
\begin{aligned}
&\left|\n\,u(h,t)\right|^2\phi^2(p_t(\chi_{E_\lambda})(h))\p\\
   &\leq \frac{1}{2}\n\,\n\,\left[\left|u(h,t)\right|^2\phi^2(p_t(\chi_{E_\lambda})(h))\right]\p\\
   &\quad-4u(h,t)\n\, u(h,t)\phi(p_t(\chi_{E_\lambda})(h))\phi^\prime(p_t(\chi_{E_\lambda})(h))\n\,(p_t(\chi_{E_\lambda})(h))\p\\
   &\quad-\left|u(h,t)\right|^2\phi(p_t(\chi_{E_\lambda})(h))\phi^{\prime\prime}(p_t(\chi_{E_\lambda})(h))\left|\n\,\left(p_t(\chi_{E_\lambda})(h)\right)\right|^2\p\\
   &\quad-\left|u(h,t)\right|^2\left|\phi^{\prime}(p_t(\chi_{E_\lambda})(h))\right|^2\left|\n\left(p_t(\chi_{E_\lambda})(h)\right)\right|^2\p\\
   &=: f_1(h,t)+f_2(h,t)+f_3(h,t)+f_4(h,t).
\end{aligned}
\end{equation}
By the Cauchy--Schwarz inequality, we have
\begin{align*}
    \left|f_2(h,t)\right|&\leq \frac{1}{2}\left|\n\,u(h,t)\right|^2\left|\phi(p_t(\chi_{E_\lambda})(h))\right|^2\p\\
    &\qquad+32\left|u(h,t)\right|^2\left|\phi^{\prime}(p_t(\chi_{E_\lambda})(h))\right|^2\left|\n\,p_t(\chi_{E_\lambda})(h)\right|^2\p\\
    &=: f_{21}(h,t)+f_{22}(h,t).
\end{align*}

We note that $f_{21}(h,t)$ is absorbed by the left hand of \eqref{d1}, while $f_{22}(h,t)$ is similar to $f_4(h,t)$, hence, we only need to consider $f_1(h,t)$, $f_3(h,t)$ and $f_4(h,t)$.

Then, \eqref{bbb2} is controlled by 
\begin{align*}
    &\left|\lim_{a\to 0,b\to\infty}\lim_{R\to\infty}\int_a^b\int_{\IR} f_1(h,t) t\,{\ud h\ud t}\right|+\left|\lim_{a\to 0,b\to\infty}\lim_{R\to\infty}\int_a^b\int_{\IR} f_3(h,t) t\,{\ud h\ud t}\right|\\
    &+\left|\lim_{a\to 0,b\to\infty}\lim_{R\to\infty}\int_a^b\int_{\IR} f_4(h,t) t\,\ud h\ud t\right|\\
    &=: \frak{I}_1+\frak{I}_2+\frak{I}_3.
\end{align*}

For $\frak{I}_3$, we have
\begin{align*}
    \frak{I}_3&\leq\int_0^\infty\int_{\IR}\left|u(h,t)\right|^2\left|\phi^{\prime}(p_t(\chi_{E_\lambda})(h))\right|^2\left|\n\left(p_t(\chi_{E_\lambda})(h)\right)\right|^2t\,\ud h \ud t.
\end{align*}

From the definitions of  $\phi$, $\phi^\prime(p_t(\chi_{E_\lambda})(h))\neq 0$ implies $p_t(\chi_{E_\lambda})(h)\geq \frac{1}{10}$. Consequently, $(h,t)\in \widetilde{W^\beta}$. Then, there exists $g_1\in E_\lambda$ such that $\rho(h,g_1)<\beta t$ and $|u(h,t)|\leq N^\beta_{\L} f(g_1)\leq \lambda$. Therefore, 
\begin{align*}
    \frak{I}_3&\leq \int_{\mathcal{G}\times\mathbb{R}_+} \lambda^2\left|\n\,(p_t(\chi_{E_\lambda})(h))\right|^2 t\,\ud h\ud t\\
    &\leq C\lambda^2\int_{\mathcal{G}\times\mathbb{R}_+} \left|t\n\,(p_t(1-\chi_{E_\lambda})(h))\right|^2 \,\cfrac{\ud h \ud t}{t}\\
    &\leq C \lambda^2 \|(1-\chi_{E_\lambda})\|_{L^2}^2\\
    &\leq C\lambda^2|E_\lambda^c|,
\end{align*}
where  in the third inequality we have used the $L^2$-boundedness of the Littlewood--Paley square function.

Following the same argument as  $\frak{I}_3$, one can obtain
\[
\frak{I}_2\leq C\lambda^2|E_\lambda^c|.
\]



It remains to estimate $\frak{I}_1$.  
\begin{align*}
\frak{I}_1&\lesssim\bigg|\lim_{a\to0,b\to\infty}\lim_{R\to\infty}\int_a^b\int_{\IR}\partial_{tt}\left[\left|u(h,t)\right|^2\phi^2(p_t(\chi_{E_\lambda})(h))\right]t\p\,\ud h\ud t\bigg| \\ 
&\quad\quad\quad+\bigg|\lim_{a\to0,b\to\infty}\lim_{R\to\infty}\int_a^b\int_{\IR}\Delta_h\left[\left|u(h,t)\right|^2\phi^2(p_t(\chi_{E_\lambda})(h))\right]t\p\,\ud h\ud t\bigg|\\
&=:|\frak{I}_{11}|+|\frak{I}_{12}|.
\end{align*}
By Fubini's theorem, integration by parts, and Lebesgue's dominated convergence theorem,  we have
\begin{align*}
\frak{I}_{11}&=\lim_{a\to0,b\to \infty}\lim_{R\to\infty}\int_{\IR} \partial_{t}\left[\left|u(h,t)\right|^2\phi^2(p_t(\chi_{E_\lambda})(h))\right]\p t{\bigg|_{t=a}^{b}}\,\ud h \\
&\qquad-\lim_{a\to0,b\to \infty}\lim_{R\to\infty}\int_{\IR}\int_{a}^{b}\partial_t\left[\left|u(h,t)\right|^2\phi^2(p_t(\chi_{E_\lambda})(h))\right]\p\,\ud t\ud h\\
&=\int_\IR \bigg\{\partial_{t}\left[\left|u(h,t)\right|^2\phi^2(p_t(\chi_{E_\lambda})(h))\right]t\bigg\}{\bigg|_{t=0}^{\infty}}\,\ud h\\
&\qquad-\lim_{a\to 0,b\to \infty}\lim_{R\to\infty}\int_{\IR}\left[\left|u(h,t)\right|^2\phi^2(p_t(\chi_{E_\lambda})(h))\right]\bigg|_{t=a}^{b}\p\,\ud h\\
&=\int_\IR \bigg\{\partial_{t}\left[\left|u(h,t)\right|^2\phi^2(p_t(\chi_{E_\lambda})(h))\right]t\bigg\}{\bigg|_{t=0}^{\infty}}\,\ud h-\int_{\IR}\left[\left|u(h,t)\right|^2\phi^2(p_t(\chi_{E_\lambda})(h))\right]\bigg|_{t=0}^{\infty}\,\ud h.
\end{align*}

\noindent Note that 
\begin{align}\label{single 3.28}
&\partial_{t}\left[\left|u(h,t)\right|^2\phi^2(p_t(\chi_{E_\lambda})(h))\right]t\notag\\
&=2u(h,t)t\partial_{t}u(h,t)\phi^2(p_t(\chi_{E_\lambda})(h))\\
&\quad+2u(h,t)^2\phi(p_t(\chi_{E_\lambda})(h))\phi^\prime(p_t(\chi_{E_\lambda})(h))t\partial_{t}p_t(\chi_{E_\lambda})(h)\notag,
\end{align}
and 
\begin{align}
&\lim_{t\to \infty}u(h,t)=0, \quad  \lim_{t\to \infty}t\partial_tu(h,t)=0, \quad \lim_{t\to \infty}t\partial_tp_t(\chi_{E_\lambda})(h,t)=0,\nonumber\\
& \lim_{t\to 0}u(h,t)=f(h),\quad \lim_{t\to 0}t\partial_tu(h,t)=0, \quad \lim_{t\to 0}t\partial_tp_t(\chi_{E_\lambda})(h,t)=0.\label{single 3.29}
\end{align}

Then, we have  
$$\int_\IR \bigg\{\partial_{t}\left[\left|u(h,t)\right|^2\phi^2(p_t(\chi_{E_\lambda})(h))\right]t\bigg\}{\bigg|_{t=0}^\infty}\,\ud h=0
$$
and 
\begin{align*}
\Bigg|\int_{\IR}\left[\left|u(h,t)\right|^2\phi^2(p_t(\chi_{E_\lambda})(h))\right]\bigg|_{t=0}^{\infty}\,\ud h\Bigg|&= \int_\IR \left|f(h)\right|^2\phi^2(\chi_{E_\lambda}(h))\,\ud h,\\
    &\leq \int_{E_\lambda} (N^\beta_{\L}(f)(h))^2\,\ud h.
\end{align*}
These estimates imply that
$$
|\frak{I}_{11}|\lesssim  \int_{E_\lambda} (N^\beta_{\L}(f)(h))^2\,\ud h.
$$

For $\frak{I}_{12}$, integrating by parts in $h$, we obtain that for any $0<a<b$,
\begin{align*}
&\left|\lim_{R\to\infty}\int_a^b\int_{\IR}\Delta_h\left[\left|u(h,t)\right|^2\phi^2(p_t(\chi_{E_\lambda})(h))\right]t\p\,\ud h\ud t\right|\\   
   &=\left|\lim_{R\to \infty}\frac{1}{R^2}\int_a^b\int_{\IR}(\Delta_h\psi)\left(\frac{h}{R}\right)\left[\left|u(h,t)\right|^2\phi^2(p_t(\chi_{E_\lambda})(h))\right]t\ud h\ud t\right|\\
&\lesssim\lim_{R\to \infty}\frac{b}{R^2}\int_a^b\int_{\IR}\left|u(h,t)\right|^2\ud h\ud t=0,
\end{align*}
which implies that
$\frak{I}_{12}=0$, and thus $\frak{I}_{1}=0$. This completes the proof.
\end{proof}
\medskip
\bigskip

\section{Proof of Theorem \ref{prod F-S thm}}\label{prouct proof}
\begin{proof}
We will prove a stronger version of Theorem \ref{prod F-S thm},  
where the area operator $\SP(f)$  is replaced by $\Snp (f)$, the area operator  associated with the full gradient,  defined by
\[\Snp(f)(\g):=\left(\int_{\Gamma(\g)}\left|t_1\n_1 p_{t_1}^{\L_1} t_2\n_2 p_{t_2}^{\L_2}(f)(\h)\right|^2\,\,\cfrac{\ud \h\,\t}{t_1^{Q_1+1}t_2^{Q_2+1}}\right)^{\frac{1}{2}}.\]
 Here for $i=1,2$,  $\n_i$ stands for the  full gradient on $\mathcal{G}_i$, i.e., $(\partial_{t_i},\nabla_{g_i})$, and
\[
p_{t_i}^{\L_i}f:= e^{-t_i\sqrt{\L_i}}f.
\]
We also denote $p_{t_i}f:= e^{-t_i\sqrt{-\Delta_i}}f$, for $i=1,2$.

Next, for $\lambda>0$ and $f\in L^1(\IT)$ satisfying $N^\beta_{\L_1,\L_2}(f)\in L^1(\IT)$, define
\begin{align*}
&E_\lambda=\{\g\in\IT:\,N^\beta_{\L_1,\L_2}(f)(\g)\leq \lambda\},\quad E_{\lambda}^c=\{\g\in\IT:\,N^\beta_{\L_1,\L_2}(f)(\g)> \lambda\},\\
&A_\lambda=\Big\{\g\in\IT:\,\mathcal{M}_s(\chi_{E_{\lambda}^c})(\g)\leq \frac{1}{10C_1}\Big\},\,\,A_{\lambda}^c=\Big\{\g\in\IT:\,\mathcal{M}_s(\chi_{E_{\lambda}^c})(\g)>\frac{1}{10C_1}\Big\},
\end{align*}
where $C_1$ is as in \eqref{product poisson controlled}.

Similar to Theorem \ref{single fefferman-Stein}, the key step in this proof is to   prove that there exist constants $C>0$ and $\beta>1$ such that for all $f\in C_0^\infty(\IT)$
 and $\lambda>0$, 
\begin{align}\label{good lambda}
 \int_{A_\lambda}\left|\Snp(f)(\h)\right|^2\,\,{\rm d\h}\lesssim
    \int_{E_\lambda}\left|N^\beta_{\L_1,\L_2}(f)(\h)\right|^2\,\,{{\bf {\rm d\h}}}+\lambda^2|E_\lambda^c|.
\end{align}

We now  prove \eqref{good lambda}.
It follows from  Tonelli's theorem that
\begin{align}
    \int_{A_\lambda}\left|\Snp(f)(\h)\right|^2\,\,{{\bf {\rm d\h}}}\lesssim \int_W\left|\n_1 p_{t_1}^{\L_1} u_{h_2,t_2}(h_1)\right|^2t_1t_2\,\,\ud \h\,\t, \label{ll3}
\end{align}
where $W=\bigcup\limits_{\g\in A_\lambda}\Gamma(\g)$ and $u_{h_2,t_2}(h_1)=:\n_2p_{t_2}^{\L_2}(f)(h_1,h_2)$.

To proceed, we will extend the region $W$ to the whole space while preserving the information on $W$. Define
\[\widetilde{W^{\beta}}:=\bigcup\limits_{\g\in E_\lambda}\Gamma^\beta(\g)
\quad {\rm and}\quad
m_{\lambda}(\g):= \chi_{E_\lambda}(g_1,g_2),
\]
where $\Gamma^\beta(\g)=\Gamma_1^\beta(g_1)\otimes\Gamma_2^\beta(g_2)$ with $\beta>1$.  Let $\phi$ be given by \eqref{cutoff-function}.

As in \eqref{ee-lower bound} and \eqref{<1/10}, if $(\h, {\bf t}) \in W$,  then the definition of $A_\lambda$, \eqref{product poisson controlled} and the product Poisson kernel estimates yield
 \begin{align}\label{prod 4.33}
p_{t_1} p_{t_2}(m_\lambda)(\h) \geq \frac{9}{10}.    
\end{align}
 If $(\h, {\bf t}) \notin \widetilde{W^{\beta}}$, we choose  $\beta > 1$ sufficiently large so that 
 \begin{align}\label{prod 4.34}
 p_{t_1} p_{t_2}(m_\lambda)(\h) \leq \frac{C}{\beta} < \frac{1}{10}.
 \end{align}




Now we take  smooth functions $\psi_i$ on $\IR_i$ such that $\psi_i=1$ on $B(0,1)$ and $\psi_i\in C_0^\infty(B(0,2))$ for $i=1,2.$
Hence, by \eqref{prod 4.33} and the definition of $\phi$, the right term of \eqref{ll3} is controlled by 
\begin{align}\label{b2}
    \ir&\left|\n_1 p_{t_1}^{\L_1} u_{h_2,t_2}(h_1)\right|^2\phi^2(p_{t_1}p_{t_2}(m_{\lambda})(\h))\\
    &\quad\quad\times\psi_1(h_1/R_1)\psi_2(h_2/R_2) t_1t_2\,\,\ud \h\,\t.\notag
\end{align} 
Denote $\widetilde{\psi}_{\bf R}(\h):=\psi_1(h_1/R_1)\psi_2(h_2/R_2)$. Noting that $p_{t_1}p_{t_2}(m_{\lambda})(\g)$, as a function of $(g_1,t_1)$, is harmonic on $\G\times\mathbb{R}_+$,  the same argument as in \eqref{d1} gives
\begin{flalign}
&\left|\n_1p_{t_1}^{\L_1}u_{h_2,t_2}(h_1)\right|^2\phi^2(p_{t_1}p_{t_2}(m_{\lambda})(\h))\widetilde{\psi}_{\bf R}(\h) & \label{dd1}\\
&\leq \frac{1}{2}\n_1\n_1\left[\left|p_{t_1}^{\L_1}u_{h_2,t_2}(h_1)\right|^2\phi^2(p_{t_1}p_{t_2}(m_{\lambda})(\h))\right]\widetilde{\psi}_{\bf R}(\h) & \notag\\
&\quad -4p_{t_1}^{\L_1}u_{h_2,t_2}(h_1)\n_1 p_{t_1}^{\L_1}u_{h_2,t_2}(h_1)  \phi(p_{t_1}p_{t_2}(m_{\lambda})(\h))\phi^\prime(p_{t_1}p_{t_2}(m_{\lambda})(\h))\n_1(p_{t_1}p_{t_2}(m_{\lambda})(\h))\widetilde{\psi}_{\bf R}(\h)& \notag\\
&\quad-\left|p_{t_1}^{\L_1}u_{h_2,t_2}(h_1)\right|^2\phi(p_{t_1}p_{t_2}(m_{\lambda})(\h))\phi^{\prime\prime}(p_{t_1}p_{t_2}(m_{\lambda})(\h)) \left|\n_1(p_{t_1}p_{t_2}(m_{\lambda})(\h))\right|^2\widetilde{\psi}_{\bf R}(\h) & \notag\\
&\quad-\left|p_{t_1}^{\L_1}u_{h_2,t_2}(h_1)\right|^2\left|\phi^{\prime}(p_{t_1}p_{t_2}(m_{\lambda})(\h))\right|^2\left|\n_1p_{t_1}p_{t_2}(m_{\lambda})(\h)\right|^2\widetilde{\psi}_{\bf R}(\h) & \notag\\
&=: \sum_{j=1}^{4}f_j(h_1,h_2,t_1,t_2) .\notag
\end{flalign}

By the Cauchy--Schwarz inequality, we have
\begin{equation*}
    \begin{aligned}
   & \left|f_2(h_1,h_2,t_1,t_2)\right|\\
    &\leq \frac{1}{2}\left|\n_1p_{t_1}^{\L_1}u_{h_2,t_2}(h_1)\right|^2\left|\phi(p_{t_1}p_{t_2}(m_{\lambda})(\h))\right|^2\psi_1(h_1/R_1)\psi_2(h_2/R_2)\\
    &\quad+32\left|p_{t_1}^{\L_1}u_{h_2,t_2}(h_1)\right|^2\left|\phi^{\prime}(p_{t_1}p_{t_2}(m_{\lambda})(\h))\right|^2\left|\n_1p_{t_1}p_{t_2}(m_{\lambda})(\h)\right|^2\psi_1(h_1/R_1)\psi_2(h_2/R_2)\\
    &=: f_{21}(h_1,h_2,t_1,t_2)+f_{22}(h_1,h_2,t_1,t_2).
\end{aligned}
\end{equation*}
Note that $f_{21}$ can be absorbed into the left-hand side of \eqref{dd1}, while $f_{22}$ is similar to  $f_4$ and $f_3$. One has
\begin{align*}
    &f_{22}(h_1,h_2,t_1,t_2)+f_3(h_1,h_2,t_1,t_2)+f_4(h_1,h_2,t_1,t_2)\\
    &\lesssim \left|p_{t_1}^{\L_1}u_{h_2,t_2}(h_1)\right|^2\left|\Phi^{\prime}(p_{t_1}p_{t_2}(m_{\lambda})(\h))\right|^2\left|\n_1p_{t_1}p_{t_2}(m_{\lambda})(\h)\right|^2\psi_1(h_1/R_1)\psi_2(h_2/R_2)\\
    &=: F_2(h_1,h_2,t_1,t_2),
\end{align*}
where we choose a smooth function $\Phi(t)$ on ${\mathbb R}$ such that 
\begin{align*}
    \Phi^{\prime}(t) = \left(\phi^{\prime}(t)^4+[\phi(t)\phi^{\prime\prime}(t)]^2\right)^{\frac{1}{4}} \quad  \text{and}\quad \Phi(\frac{1}{10})=0.
\end{align*}
Note that $\Phi^\prime(t)=0$ for $t<\frac{1}{10}$ or $t>1$, and $\Phi$  has similar properties to $\phi$.

Then \eqref{b2} is controlled by 
\begin{align*}
    &2\left|\ir f_1(h_1,h_2,t_1,t_2) t_1t_2\,\ud \h\,\t\right|\\
    &\quad+2\left|\ir F_2(h_1,h_2,t_1,t_2) t_1t_2\,\ud \h\,\t\right|\\
    &=: 2\frak{I}_1+2\frak{I}_2.
\end{align*}
For $\frak{I}_1$, we have
\begin{align*}
   \frak{I}_1 &\leq \frac{1}{2}\bigg|\ir \partial_{t_1}^2\left[\left|p_{t_1}^{\L_1}u_{h_2,t_2}(h_1)\right|^2\phi^2(p_{t_1}p_{t_2}(m_{\lambda})(\h))\right]\\
    &\hskip4.8cm\times t_1t_2\psi_1 (h_1/R_1)\psi_2(h_2/R_2)\,\,\ud \h\,\t \bigg|\\
    &\quad+\frac{1}{2}\bigg|\ir\Delta_{h_1}\left[\left|p_{t_1}^{\L_1}u_{h_2,t_2}(h_1)\right|^2\phi^2(p_{t_1}p_{t_2}(m_{\lambda})(\h))\right]\\
    &\hskip5.3cm
    \times t_1t_2\psi_1(h_1/R_1)\psi_2(h_2/R_2)\,\,{\ud \h\,\t}\bigg|\\
    &=:|\frak{I}_{11}|+|\frak{I}_{12}|.
\end{align*}

For $\frak{I}_{11}$, by Fubini's theorem, integration by parts in $t_1$ and Lebesgue's dominated convergence theorem, we have
\begin{align}
\frak{I}_{11}
    &=\lim_{a_2\to0\atop b_2\to\infty}\lim_{R_2\to\infty}\lim_{a_1\to0\atop b_1\to\infty}\lim_{R_1\to\infty}\int_{a_2}^{b_2}\int_{\IR_2}\int_{\IR_1}\partial_{t_1}\left[\left|p_{t_1}^{\L_1}u_{h_2,t_2}(h_1)\right|^2\phi^2(p_{t_1}p_{t_2}(m_{\lambda})(\h))\right]\notag\\
    &\hskip3cm \times t_1t_2\psi_1(h_1/R_1)\psi_2(h_2/R_2)\bigg|_{t_1=a_1}^{b_1}\,\ud h_1\ud h_2\ud t_2\notag\\
    &\quad-\lim_{a_2\to0\atop b_2\to\infty}\lim_{R_2\to\infty}\lim_{a_1\to0\atop b_1\to\infty}\lim_{R_1\to\infty}\int_{a_2}^{b_2}\int_{\IR_2}\int_{a_1}^{b_1}\int_{\IR_1}\partial_{t_1}\left[\left|p_{t_1}^{\L_1}u_{h_2,t_2}(h_1)\right|^2\phi^2(p_{t_1}p_{t_2}(m_{\lambda})(\h))\right]\notag\\
    &\hskip3cm\times t_2\psi_1(h_1/R_1)\psi_2(h_2/R_2)\,\ud \h\,\t\notag\\
    &=\lim_{a_2\to0\atop b_2\to\infty}\lim_{R_2\to\infty}\lim_{R_1\to\infty}\int_{a_2}^{b_2}\int_{\IR_2}\int_{\IR_1}\partial_{t_1}\left[\left|p_{t_1}^{\L_1}u_{h_2,t_2}(h_1)\right|^2\phi^2(p_{t_1}p_{t_2}(m_{\lambda})(\h))\right]\label{I11}\\
    &\hskip3cm \times t_1t_2\psi_1(h_1/R_1)\psi_2(h_2/R_2)\bigg|_{t_1=0}^{\infty}\,\ud h_1\ud h_2\ud t_2\notag\\
    &\quad-\lim_{a_2\to0\atop b_2\to\infty}\lim_{R_2\to\infty}\lim_{R_1\to\infty}\int_{a_2}^{b_2}\int_{\IR_2}\int_{\IR_1}\left[\left|p_{t_1}^{\L_1}u_{h_2,t_2}(h_1)\right|^2\phi^2(p_{t_1}p_{t_2}(m_{\lambda})(\h))\right]\notag\\
    &\hskip3cm\times t_2\psi_1(h_1/R_1)\psi_2(h_2/R_2)\bigg|_{t_1=0}^{\infty}\ud h_1 \ud h_2\ud t_2\notag\\
    &=:\frak{I}_{11}^\prime+\frak{I}^{\prime\prime}_{11}.\notag
\end{align}

\noindent As in \eqref{single 3.28} and \eqref{single 3.29}, we obtain $\frak{I}_{11}^\prime=0$ and
\begin{align*}
    \frak{I}_{11}^{\prime\prime}
    =\lim_{a_2\to 0\atop b_2\to \infty}\lim_{R_2\to\infty}\int_{a_2}^{b_2}\int_{\IR_2}\int_{\IR_1}  \left[\left|u_{h_2,t_2}(h_1)\right|^2\phi^2(p_{t_2}(m_{\lambda})(\h))\right]\psi_2(h_2/R_2)t_2\ud h_1\ud  h_2\ud t_2.
\end{align*}

For $\frak{I}_{12}$, by integration by parts, we have
\begin{align}
&\int_{\IR_1}\Delta_{h_1}\left[\left|p_{t_1}^{\L_1}u_{h_2,t_2}(h_1)\right|^2\phi^2(p_{t_1}p_{t_2}(m_{\lambda})(\h))\right]t_1t_2\psi_1(h_1/R_1)\psi_2(h_2/R_2)\ud h_1\notag\\
&=\int_{\IR_1} \left[\left|p_{t_1}^{\L_1}u_{h_2,t_2}(h_1)\right|^2\phi^2(p_{t_1}p_{t_2}(m_{\lambda})(\h))\right]t_1t_2\Delta_{h_1}(\psi_1(h_1/R_1))\psi_2(h_2/R_2)\ud h_1\label{J_12}\\
&\leq \cfrac{1}{R_1^2}\int_{\IR_1}  \left[\left|p_{t_1}^{\L_1}u_{h_2,t_2}(h_1)\right|^2\phi^2(p_{t_1}p_{t_2}(m_{\lambda})(\h))\right]t_1t_2\psi_2(h_2/R_2)\ud h_1\to 0, \notag
\end{align}
as $R_1\to \infty$. This yields $\frak{I}_{12}=0$.

Therefore, 
\begin{align*}
    \frak{I}_1&\leq\frac{1}{2}\lim_{a_2\to 0\atop b_2\to \infty}\lim_{R_2\to\infty}\int_{a_2}^{b_2}\int_{\IR_2}\int_{\IR_1}  \left[\left|u_{h_2,t_2}(h_1)\right|^2\phi^2(p_{t_2}(m_{\lambda})(\h))\right]\psi_2(h_2/R_2)t_2\ud\h\ud t_2\\
    &=\frac{1}{2}\lim_{a_2\to 0\atop b_2\to \infty}\lim_{R_2\to\infty}\int_{\IR_1}\int_{a_2}^{b_2}\int_{\IR_2} \left[\left|\n_2p_{t_2}^{\L_2}(f)(\h)\right|^2\phi^2(p_{t_2}(m_{\lambda})(\h))\right]\psi_2(h_2/R_2)t_2\ud h_2\ud t_2\ud h_1.
\end{align*}

Similar to \eqref{d1}, we have
\begin{align}
    &\left|\n_2p_{t_2}^{\L_2}(f)(\h)\right|^2\phi^2(p_{t_2}(m_{\lambda})(\h))\psi_2(h_2/R_2)\label{d2}\\
    &\leq \frac{1}{2}\n_2\n_2\left[\left|p_{t_2}^{\L_2}(f)(\h)\right|^2\phi^2(p_{t_2}(m_{\lambda})(\h))\right]\psi_2(h_2/R_2)\notag\\
    &\quad -4p_{t_2}^{\L_2}(f)(\h)\n_2p_{t_2}^{\L_2}(f)(\h)\phi(p_{t_2}(m_{\lambda})(\h))\phi^\prime(p_{t_2}(m_{\lambda})(\h))\n_2(p_{t_2}(m_{\lambda})(\h))\psi_2(h_2/R_2)\notag\\
    &\quad-\left|p_{t_2}^{\L_2}(f)(\h)\right|^2\phi(p_{t_2}(m_{\lambda})(\h))\phi^{\prime\prime}(p_{t_2}(m_{\lambda})(\h))\left|\n_2p_{t_2}(m_{\lambda})(\h)\right|^2\psi_2(h_2/R_2)\notag\\
    &\quad -\left|p_{t_2}^{\L_2}(f)(\h)\right|^2\phi^{\prime}(p_{t_2}(m_{\lambda})(\h))\phi^{\prime}(p_{t_2}(m_{\lambda})(\h))\left|\n_2p_{t_2}(m_{\lambda})(\h)\right|^2\psi_2(h_2/R_2)\notag\\
    &=: \sum_{j=1}^{4}\mathcal{H}_j(h_1,h_2,t_1,t_2).\notag
\end{align}

For $\mathcal{H}_2$, using the Cauchy--Schwarz inequality,  $\mathcal{H}_2$ is controlled by 
\begin{align*}
    &\frac{1}{2}\left|\n_2p_{t_2}^{\L_2}(f)(\h)\right|^2\left|\phi^2(p_{t_2}(m_{\lambda})(\h))\right|^2\psi_2(h_2/R_2)\\
    &\quad+32\left|p_{t_2}^{\L_2}(f)(\h)\right|^2\left|\phi^\prime(p_{t_2}(m_{\lambda})(\h))\right|^2\left|\n_2(p_{t_2}(m_{\lambda})(\h))\right|^2\psi_2(h_2/R_2)\\
    &=:  {\mathcal H}_{21}(h_1,h_2,t_1,t_2)+{\mathcal H}_{22}(h_1,h_2,t_1,t_2).
\end{align*}
Note that $\mathcal{H}_{21}$ can be absorbed into the left-hand side of \eqref{d2} and $\mathcal{H}_{22}$ is similar to  $\mathcal{H}_3$ and $\mathcal{H}_4$.  One has
\begin{align*}
    &\mathcal{H}_{22}(h_1,h_2,t_1,t_2)+\mathcal{H}_3(h_1,h_2,t_1,t_2)+\mathcal{H}_4(h_1,h_2,t_1,t_2)\\
    &\lesssim \left|p_{t_2}^{\L_2}(f)(\h)\right|^2\left|\Psi^\prime(p_{t_2}(m_{\lambda})(\h))\right|^2\left|\n_2(p_{t_2}(m_{\lambda})(\h))\right|^2\psi_2(h_2/R_2)\\
    &=: \frak{J}_2(h_1,h_2,t_1,t_2),
\end{align*}
where we choose a smooth function $\Psi(t)$ on ${\mathbb R}$ such that
\begin{align*}
    \Psi^\prime(t)=\left(\phi^{\prime}(t)^4+[\phi\phi^{\prime\prime}(t)]^2  \right)^\frac{1}{4} \quad \text{and} \quad \Psi(\frac{1}{10})=0.
\end{align*}
Note that $\Psi^\prime(t)=0$ for $t<\frac{1}{10}$ or $t>1$, and $\Psi$  has similar properties to $\phi$.
 
We then have
 \begin{align*}
     \frak{I}_1&\leq\bigg|\frac{1}{2}\lim_{a_2\to 0\atop b_2\to \infty}\lim_{R_2\to\infty}\int_{\IR_1}\int_{a_2}^{b_2}\int_{\IR_2} \mathcal{H}_1(h_1,h_2,t_1,t_2) t_2\,\,{\ud h_2\ud t_2\ud h_1}\bigg|\\
     &\quad+\bigg|\frac{1}{2}\lim_{a_2\to 0\atop b_2\to \infty}\lim_{R_2\to\infty}\int_{\IR_1}\int_{a_2}^{b_2}\int_{\IR_2} \frak{J}_2(h_1,h_2,t_1,t_2)t_2\,\,{\ud h_2\ud t_2\ud h_1}\bigg|\\
     &=: |\widetilde{\frak{I}_{11}}|+|\widetilde{\frak{I}_{12}}|.
 \end{align*}

For $\widetilde{\frak{I}_{11}}$, we have
\begin{align*}
    \widetilde{\frak{I}_{11}}&=\frac{1}{2}\lim_{a_2\to 0\atop b_2\to \infty}\lim_{R_2\to\infty}\int_{\IR_1}\int_{a_2}^{b_2}\int_{\IR_2}\partial_{t_2}^2 \left[\left|p_{t_2}^{\L_2}(f)(\h)\right|^2\phi^2(p_{t_2}(m_{\lambda})(\h))\right]\psi_2(h_2/R_2)t_2\ud h_2\ud t_2\ud h_1\\
    &\quad+\frac{1}{2}\lim_{a_2\to 0\atop b_2\to \infty}\lim_{R_2\to\infty}\int_{\IR_1}\int_{a_2}^{b_2}\int_{\IR_2}\Delta_{h_2} \left[\left|p_{t_2}^{\L_2}(f)(\h)\right|^2\phi^2(p_{t_2}(m_{\lambda})(\h))\right]\psi_2(h_2/R_2)t_2\ud h_2\ud t_2\ud h_1\\
    &=:\widetilde{\frak{I}_{111}}+\widetilde{\frak{I}_{112}}.
\end{align*}
Similar to \eqref{J_12}, we have $\widetilde{\frak{I}_{112}}=0.$

\noindent For $\widetilde{\frak{I}_{111}}$,  integrating by parts with respect to $t_2$ yields
\begin{align*}
    \widetilde{\frak{I}_{111}}&=\int_{\IR_1}\int_{\IR_2} \partial_{t_2}\left[\left|p_{t_2}^{\L_2}(f)(\h)\right|^2\phi^2(p_{t_2}(m_{\lambda})(\h))\right]t_2\bigg|_{t_2=0}^{\infty}\,\,{\ud h_2\ud h_1}\\
    &\quad-\int_{\IR_1}\int_{\mathcal{G}_2\times\mathbb{R}_+} \partial_{t_2}\left[\left|p_{t_2}^{\L_2}(f)(\h)\right|^2\phi^2(p_{t_2}(m_{\lambda})(\h))\right]\,\,\ud h_2\ud t_2\ud h_1\\
    &=\int_{\IR_1}\int_{\IR_2} \partial_{t_2}\left[\left|p_{t_2}^{\L_2}(f)(\h)\right|^2\phi^2(p_{t_2}(m_{\lambda})(\h))\right]t_2\bigg|_{t_2=0}^{\infty}\,\,{\ud h_2\ud h_1}\\
    &\quad-\int_{\IR_1}\int_{\IR_2} \left[\left|p_{t_2}^{\L_2}(f)(\h)\right|^2\phi^2(p_{t_2}(m_{\lambda})(\h))\right]\bigg|_{t_2=0}^{\infty}\,\,\ud h_2\ud h_1.
\end{align*}
Similar to \eqref{single 3.28} and \eqref{single 3.29}, one has 
$$\int_{\IR_1}\int_{\IR_2 }\partial_{t_2}\left[\left|p_{t_2}^{\L_2}(f)(\h)\right|^2\phi^2(p_{t_2}(m_{\lambda})(\h))\right]t_2\bigg|_{t_2=0}^{\infty}\ud h_2\ud h_1=0.$$ 
Since 
\[
\lim_{t_2\to\infty}\phi(p_{t_2}(m_{\lambda})(\h))=0,\quad \lim_{t_2\to 0}p_{t_2}^{\L_2}(f)(\h)=f(\h),\quad\lim_{t_2\to 0}p_{t_2}(m_\lambda)(\h)=m_\lambda(\h),
\]
we  have
\begin{align*}
    |\widetilde{\frak{I}_{111}}|\leq \int_{\IR_1}\int_{\IR_2} f^2(\h)\phi^2(m_\lambda(\h))\,\,{\rm d\h}\leq C \int_{E_\lambda} N^\beta_{\L_1,\L_2}(f)(\h)^2\,\,{\rm d\h},
\end{align*}
where in the last inequality we used $|f(\h)|\leq N^\beta_{\L_1,\L_2}(f)(\h)$ and that  the condition $\phi(m_\lambda(\h))\neq 0$ implies $\h\in E_\lambda$. Thus, we have obtained
\begin{align*}
    |\widetilde{\frak{I}_{11}}|\leq |\widetilde{\frak{I}_{111}}|+ |\widetilde{\frak{I}_{112}}|\leq C \int_{E_\lambda} N^\beta_{\L_1,\L_2}(f)(\h)^2\,\,{\rm d\h}.
\end{align*}

For $\widetilde{\frak{I}_{12}}$, we have
\begin{align*}
 |\widetilde{\frak{I}_{12}}|
&\lesssim \int_{\IR_1}\int_{\mathcal{G}_2\times\mathbb{R}_+} \left|p_{t_2}^{\L_2}(f)(\h)\right|^2\left|\Psi^\prime(p_{t_2}(m_{\lambda})(\h))\right|^2\left|\n_2(p_{t_2}(m_{\lambda})(\h))\right|^2t_2\ud h_2\ud t_2\ud h_1.
\end{align*}
By the definitions of $m_\lambda$ and $\Psi$, if $\Psi^{\prime}(p_{t_2}(m_{\lambda})(\h))\neq 0$, then $p_{t_2}(m_\lambda)(\h)>1/10$. From \eqref{prod 4.34}, we have $(h_1,h_2,0,t_2)\in \widetilde{W^\beta}$. Consequently, there exists $g_2\in {\mathcal G}_2$, such that $(h_1,g_2)\in E_\lambda$ and $\left|p_{t_2}^{\L_2}(f)(\h)\right|\leq N^\beta_{\L_1,\L_2}(f)(h_1,g_2)\leq \lambda$. Thus 
\begin{align*}
    |\widetilde{\frak{I}_{12}}|
    &\lesssim \lambda^2\int_{\IR_1}\int_{\II} \left|t_2\n_2(p_{t_2}(m_{\lambda})(\h))\right|^2\,\cfrac{\ud h_2\ud t_2\ud h_1}{t_2}\\
    &= \lambda^2\int_{\IR_1}\int_{\II}\left|t_2\n_2(p_{t_2}(1-m_\lambda)(\h))\right|^2\,\cfrac{\ud h_2\ud t_2\ud h_1}{t_2}\\
    &\lesssim \lambda^2\left\|1-m_\lambda\right\|_{L^2(\IT)}^2\\&\lesssim\lambda^2\left|E_\lambda^c\right|.
\end{align*}
where in the equality above we used  the fact that $t_2\n_2(p_{t_2}(1))=0$, and in the second inequality we used 
$L^2$-boundedness of the Littlewood--Paley square function.

\smallskip

We now turn to $\frak{I}_2$.
Since 
\[\n_1\Phi(p_{t_1}p_{t_2}(m_{\lambda})(\h))=\Phi^\prime(p_{t_1}p_{t_2}(m_{\lambda})(\h))\n_1p_{t_1}p_{t_2}(m_{\lambda})(\h), \]
and recalling that $\widetilde{\psi}_{\bf R}(\h):=\psi_1(h_1/R_1)\psi_2(h_2/R_2)$,   we have, by an argument analogous to that in \eqref{dd1},
\begin{align}
     &\left|\n_2p^{\L_1}_{t_1}p_{t_2}^{\L_2}(f)(\h)\right|^2\left|\n_1\Phi(p_{t_1}p_{t_2}(m_{\lambda})(\h)\right|^2\widetilde{\psi}_{\bf R}(\h)\label{d4}\\
   &\leq \frac{1}{2}\n_2\n_2\left[\left|p^{\L_1}_{t_1}p_{t_2}^{\L_2}(f)(\h)\right|^2\left|\n_1\Phi(p_{t_1}p_{t_2}(m_{\lambda})(\h))\right|^2\right]\widetilde{\psi}_{\bf R}(\h)\notag\\
   &\quad-4p^{\L_1}_{t_1}p_{t_2}^{\L_2}(f)(\h)\n_2 p^{\L_1}_{t_1}p_{t_2}^{\L_2}(f)(\h)\n_1\Phi(p_{t_1}p_{t_2}(m_{\lambda})(\h))\n_2\n_1\Phi(p_{t_1}p_{t_2}(m_{\lambda})(\h))\widetilde{\psi}_{\bf R}(\h)\notag\\
   &\quad-\left|p^{\L_1}_{t_1}p_{t_2}^{\L_2}(f)(\h)\right|^2\left|\n_2\n_1\Phi(p_{t_1}p_{t_2}(m_{\lambda})(\h))\right|^2\widetilde{\psi}_{\bf R}(\h)\notag\\
   &\quad-\left|p^{\L_1}_{t_1}p_{t_2}^{\L_2}(f)(\h)\right|^2\n_1\Phi(p_{t_1}p_{t_2}(m_{\lambda})(\h))\n_2\n_2\n_1\Phi(p_{t_1}p_{t_2}(m_{\lambda})(\h))\widetilde{\psi}_{\bf R}(\h)\notag\\
   &=: \sum_{j=1}^4\mathscr{G}_j(h_1,h_2,t_1,t_2).\notag
\end{align}

One may apply the Cauchy--Schwarz inequality to obtain
\begin{align*}
    |{\mathscr G}_2(\h,{\bf t})|&\leq \frac{1}{2}\left|\n_2p^{\L_1}_{t_1}p_{t_2}^{\L_2}(f)(\h) \right|^2\left|\n_1\Phi(p_{t_1}p_{t_2}(m_{\lambda})(\h))\right|^2\psi_1(h_1/R_1)\psi_2(h_2/R_2)\\
    &\quad+32 \left|p^{\L_1}_{t_1}p_{t_2}^{\L_2}(f)(\h)\right|^2\left|\n_2\n_1\Phi(p_{t_1}p_{t_2}(m_{\lambda})(\h))\right|^2\psi_1(h_1/R_1)\psi_2(h_2/R_2)\\
    &=: G_1(h_1,h_2,t_1,t_2)+G_2(h_1,h_2,t_1,t_2).
\end{align*}
Note that $G_1$ can be absorbed  in the left-hand side of  \eqref{d4}.
Therefore $\frak{I}_2$ is controlled by 
\begin{align*}
   \bigg| &\ir \mathscr{G}_1(h_1,h_2,t_1,t_2)t_1t_2+G_2(h_1,h_2,t_1,t_2)t_1t_2\\
    &\quad+\mathscr{G}_3(h_1,h_2,t_1,t_2)t_1t_2+\mathscr{G}_4(h_1,h_2,t_1,t_2) t_1t_2\,{\ud \h\,\t}\bigg|\\
    &=: |\frak{I}_{21}+\frak{I}_{22}+\frak{I}_{23}+\frak{I}_{24}|.                     
\end{align*}

For $\frak{I}_{21}$, we have
\begin{align*}
    |\frak{I}_{21}|
    &\leq\frac{1}{2}\bigg|\ir\Delta_{h_2}\left[\left|p^{\L_1}_{t_1}p_{t_2}^{\L_2}(f)(\h)\right|^2\right.\\
    &\qquad\qquad\left.\left|\n_1\Phi(p_{t_1}p_{t_2}(m_{\lambda})(\h))\right|^2\right]t_1t_2\psi_1(h_1/R_1)\psi_2(h_2/R_2) \,\,{\ud \h\t}\bigg|\\
    &\quad+\frac{1}{2}\bigg|\ir \partial_{t_2}^2\big[\left|p^{\L_1}_{t_1}p_{t_2}^{\L_2}(f)(\h)\right|^2\\
    &\qquad\qquad\left.\left|\n_1\Phi(p_{t_1}p_{t_2}(m_{\lambda})(\h))\right|^2\right]t_1t_2\psi_1(h_1/R_1)\psi_2(h_2/R_2) \,\,{\ud \h\t}\bigg|\\
    &=: \frak{I}_{211}+\frak{I}_{212}.
\end{align*}

\noindent By an  argument similar to that for $\frak{I}_{12}$, we have $\frak{I}_{211}=0.$

\noindent By an  argument similar to that for $\widetilde{\frak{I}_{111}}$ and $\widetilde{\frak{I}_{12}}$, we obtain
\begin{align*}
    \frak{I}_{212}\leq C\lambda^2 \left\|1-m_\lambda\right\|_{L^2(\IT)}=C\lambda^2|E_\lambda^c|.
\end{align*}

For $\frak{I}_{22}$, by the Cauchy--Schwarz inequality, we have 
\begin{align*}
    |\frak{I}_{22}|&\leq C\int_{\III}\int_{\II} \left|p^{\L_1}_{t_1}p_{t_2}^{\L_2}(f)(\h)\right|^2\left|\n_2\n_1\Phi(p_{t_1}p_{t_2}(m_{\lambda})(\h))\right|^2t_1t_2\ud h_2\ud t_2\ud h_1\ud t_1\\
    &\leq C\int_{\III}\int_{\II} \left|p^{\L_1}_{t_1}p_{t_2}^{\L_2}(f)(\h)\right|^2\left|\Phi^{\prime\prime}(p_{t_1}p_{t_2}(m_{\lambda})(\h))\right|^2\\
    &\quad\quad\quad\times \left|\n_2p_{t_1}p_{t_2}(m_{\lambda})(\h)\right|^2\left|\n_1p_{t_1}p_{t_2}(m_{\lambda})(\h)\right|^2
    t_1t_2\ud h_2\ud t_2\ud h_1\ud t_1\\
    &\quad+ C\int_{\III}\int_{\II} \left|p^{\L_1}_{t_1}p_{t_2}^{\L_2}(f)(\h)\right|^2\left|\Phi^\prime(p_{t_1}p_{t_2}(m_{\lambda})(\h))\right|^2\\
    &\quad\quad\quad\times \left|\n_1\n_2p_{t_1}p_{t_2}(m_{\lambda})(\h)\right|^2t_1t_2\ud h_2\ud t_2\ud h_1\ud t_1\\
    &=: \frak{I}_{221}+\frak{I}_{222}.
\end{align*}
For $\frak{I}_{221}$, by the property of $\Phi^{\prime\prime}$ and an argument similar to that of $\widetilde{\frak{I}_{12}}$, we have that if  $\Phi^{\prime\prime}(p_{t_1}p_{t_2}(m_\lambda)(\h))\neq0$, then  $|p_{t_1}p_{t_2}(m_\lambda)(\h)|\leq \lambda.$ Therefore,
\begin{align*}
    \frak{I}_{221}&\leq C\lambda^2\int_{\III}\int_{\II} \left|\n_2p_{t_1}p_{t_2}(m_{\lambda})(\h)\right|^2\left|\n_1p_{t_1}p_{t_2}(m_{\lambda})(\h)\right|^2
    t_1t_2\,\,{\rm d\h \t}\\
    &\leq C\lambda^2 \int_{\III}\int_{\II} \left|t_2\n_2p_{t_1}p_{t_2}(m_{\lambda})(\h)\right|^2\left|t_1\n_1p_{t_1}p_{t_2}(m_{\lambda})(\h)\right|^2
    \,\,\cfrac{{\rm d\h \t}}{\bf t}.
\end{align*}
Let $\mathcal{M}_1$ and $\mathcal{M}_2$ be the Hardy--Littlewood maximal function on $\mathcal{G}_1$ and $\mathcal{G}_2$, respectively. Applying H\"older's inequality, we further have 
\begin{align*}
    \frak{I}_{221}
    &\leq C\lambda^2\left\{\int_{\IT}\left[\int_0^\infty\left|\mathcal{M}_1\left(|t_2\n_2p_{t_2}(m_{\lambda})|\right)(\h)\right|^2\,\,\cfrac{{\ud t_2}}{t_2}\right]^2\,\,{\ud h_1\ud h_2}\right\}^\frac{1}{2}\\
    &\quad\quad\times\left\{\int_{\IT}\left[\int_0^\infty\left|\mathcal{M}_2\left(|t_1\n_1p_{t_1}(m_\lambda)|\right)(\h)\right|^2\,\,\cfrac{{\ud t_1}}{t_1}\right]^2\,\,{\ud h_1\ud h_2}\right\}^\frac{1}{2}.
\end{align*}
By the $L^4$-boundedness of the vector-valued Hardy--Littlewood maximal operator and the Littlewood--Paley operator, together with the fact $\n_ip_{t_i}(1)=0$ for $i=1,2$, we have
\begin{align*}
    \frak{I}_{221}&\leq C\lambda^2\left\{\int_{\IT}\left[\int_0^\infty\left|t_2\n_2p_{t_2}(m_{\lambda})(\h)\right|^2\,\,\cfrac{{\ud t_2}}{t_2}\right]^2\,\,{\ud h_1\ud h_2}\right\}^\frac{1}{2}\\
    &\quad\quad\times\left\{\int_{\IT}\left[\int_0^\infty\left|t_1\n_1p_{t_1}(m_{\lambda})(\h)\right|^2\,\,\cfrac{{\ud t_1}}{t_1}\right]^2\,\,{\ud h_1\ud h_2}\right\}^\frac{1}{2}\\
    &\leq C\lambda^2\left\{\int_{\IT}\left[\int_0^\infty\left|t_2\n_2p_{t_2}(1-m_\lambda)(\h)\right|^2\,\,\cfrac{{\ud t_2}}{t_2}\right]^2\,\,{\ud h_1\ud h_2}\right\}^\frac{1}{2}\\
    &\quad\quad\times\left\{\int_{\IT}\left[\int_0^\infty\left|t_1\n_1p_{t_1}(1-m_\lambda)(\h)\right|^2\,\,\cfrac{{\ud t_1}}{t_1}\right]^2\,\,{\ud h_1\ud h_2}\right\}^\frac{1}{2}\\
    &\leq C\lambda^2\left|E_\lambda^c\right|.
\end{align*}
For $\frak{I}_{222}$,  note that for $(h_1,h_2)$ with $\Phi^\prime(p_{t_1}p_{t_2})(m_\lambda)(\h)\neq 0$, we have $p_{t_1}p_{t_2}(m_{\lambda})(\h)\leq \lambda$. As a consequence,  
\begin{align*}
    \frak{I}_{222}&\leq C\lambda^2 \int_{\III}\int_{\II} \left|t_1\n_1t_2\n_2p_{t_1}p_{t_2}(m_{\lambda})(\h)\right|^2 \,\,\cfrac{\ud h_2\ud t_2\ud h_1\ud t_1}{t_1t_2}\\
    &=C\lambda^2 \int_{\III}\int_{\II} \left|t_1\n_1t_2\n_2p_{t_1}p_{t_2}(1-m_\lambda)(\h)\right|^2 \,\,\cfrac{\ud h_2\ud t_2\ud h_1\ud t_1}{t_1t_2}\\
    &\leq C \lambda^2\left\|1-m_\lambda\right\|_{L^2(\IT)}^2\\
    &\leq C \lambda^2 |E_\lambda^c|.
\end{align*}
Hence, we have proved $|\frak{I}_{22}|\lesssim \lambda^2\left|E_\lambda^c\right|$.

By an argument similar to  that for $\frak{I}_{22}$, we obtain  
\begin{align*}
    \frak{I}_{23}\leq C\lambda^2\left|E_\lambda^c\right|.
\end{align*}

Finally, we turn to estimating $\frak{I}_{24}$. By the chain rule, we have
\begin{align*}
    &\n_1\Phi(p_{t_1}p_{t_2}(m_{\lambda})(\h))\n_2\n_2\n_1\Phi(p_{t_1}p_{t_2}(m_{\lambda})(\h))\\
    &=\Phi^\prime\Phi^{\prime\prime\prime}(p_{t_1}p_{t_2}(m_{\lambda})(\h))\left|\n_1p_{t_1}p_{t_2}(m_{\lambda})(\h)\right|^2\left|\n_2p_{t_1}p_{t_2}(m_{\lambda})(\h)\right|^2\\
    &\quad\quad+2\Phi^\prime\Phi^{\prime\prime}(p_{t_1}p_{t_2}(m_{\lambda})(\h))\n_1p_{t_1}p_{t_2}(m_{\lambda})(\h)\n_2p_{t_1}p_{t_2}(m_{\lambda})(\h)\n_1\n_2p_{t_1}p_{t_2}(m_{\lambda})(\h).
\end{align*}

Thus, $\frak{I}_{24}$ can be dominated by $\frak{I}_{241}+\frak{I}_{242}$ with respect to the above two terms in the integrand respectively. Similar to $\frak{I}_{221}$, we have  
\begin{align}\label{e-24-1}
    |\frak{I}_{241}|\lesssim \lambda^2|E_\lambda^c|.
\end{align}
By the support property of $\Phi^\prime\Phi^{\prime\prime}$ and H\"older's inequality, we obtain
\begin{align*}
    \frak{I}_{242}&\leq C\lambda^2\int_{\II}\int_{\III} \left|\n_1p_{t_1}p_{t_2}(m_{\lambda})(\h)\n_2p_{t_1}p_{t_2}(m_{\lambda})(\h)\right|\\
    &\quad\quad\times\left|\n_1\n_2p_{t_1}p_{t_2}(m_{\lambda})(\h)\right|t_1t_2{\ud h_2\ud t_2\ud h_1\ud t_1}\\
    &\leq C\lambda^2\left\{\int_{\II}\int_{\III} \left|\n_1p_{t_1}p_{t_2}(m_{\lambda})(\h)\n_2p_{t_1}p_{t_2}(m_{\lambda})(\h)\right|^2t_1t_2{\ud t_1\ud h_1\ud t_2\ud h_2}\right\}^{1/2}\\
    &\quad\quad\times\left\{\int_{\II}\int_{\III}\left|\n_1\n_2p_{t_1}p_{t_2}(m_{\lambda})(\h)\right|^2t_1t_2{\ud h_2\ud t_2\ud h_1\ud t_1}\right\}^{1/2}.
\end{align*}
Similar to $\frak{I}_{221}$ and $\frak{I}_{222}$, we have $|\frak{I}_{242}|\lesssim \lambda^2|E_\lambda^c|
$. This, together with \eqref{e-24-1}, yields 
$$
|\frak{I}_{24}|\lesssim \lambda^2|E_\lambda^c|.
$$

Combining these estimates together, we conclude 
\begin{align*}
    \int_{A_\lambda}\left|\Snp(f)(\g)\right|^2\,\,{\rm d\g}\lesssim \int_{E_\lambda}\left|N^\beta_{\L_1,\L_2}(f)(\h)\right|^2 \rm{d}\h +\lambda^2|E_\lambda^c|.
\end{align*}

The proof is complete.
\end{proof}
    
\begin{corollary}\label{lemma auxiliary}
The area integral function $\SP f$ defined in (\ref{esf}) satisfies
\begin{align*}
\big| \big\{ \g \in \IT:\ |\SP f(\g)|>\lambda \big\}\big|\lesssim \bigg\|{f \over
\lambda}\bigg\|_{ L \log^+ L(\IT)}.
\end{align*}

\end{corollary}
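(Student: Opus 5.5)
We derive the corollary from Theorem \ref{prod F-S thm} combined with the $L\log^+L$ weak-type bound for the strong maximal function. The proof first treats $f\in C_0^\infty(\IT)$ and then passes to general $f$ by density.

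\textbf{Step 1: pointwise domination by $\mathcal M_s$.} By \eqref{product poisson controlled}, every $(\h,{\bf t})\in\Gamma^\beta(\g)$ satisfies
\[
\bigl|p^{\L_1}_{t_1}p^{\L_2}_{t_2}f(\h)\bigr|\le C_1\inf_{\rho_i(z_i,h_i)<t_i}\mathcal M_s f(\mathbf z).
\]
The infimum should be compared with $\mathcal M_s f$ at $\g$ itself. The cleaner route is to redo the kernel estimate directly. Since $\rho_i(g_i,h_i)<\beta t_i$, we have
\[
\frac{t_i}{(t_i^2+\rho_i(h_i,z_i)^2)^{(Q_i+1)/2}}\lesssim_\beta \frac{t_i}{(t_i^2+\rho_i(g_i,z_i)^2)^{(Q_i+1)/2}}.
\]
Hence \eqref{poisson bound estimate} in each variable, together with the standard decomposition of a radially decreasing integrable kernel into annuli, yields
\[
N^\beta_{\L_1,\L_2}(f)(\g)\le C_\beta\,\mathcal M_s f(\g).
\]

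\textbf{Step 2: apply Theorem \ref{prod F-S thm} and the layer-cake formula.} Set $N=N^\beta_{\L_1,\L_2}(f)$. Theorem \ref{prod F-S thm} gives
\[
|\{\SP f>\lambda\}|\lesssim |\{N>\lambda\}|+\lambda^{-2}\int_{\{N\le\lambda\}}N^2.
\]
The layer-cake formula bounds the second term:
\[
\lambda^{-2}\int_{\{N\le\lambda\}}N^2\le 2\lambda^{-2}\int_0^\lambda s\,|\{N>s\}|\,ds.
\]
By Step 1, $|\{N>s\}|\le|\{\mathcal M_s f>s/C_\beta\}|$.

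\textbf{Step 3: the $L\log L$ bound for $\mathcal M_s$.} Here we use the strong maximal theorem of Jessen--Marcinkiewicz--Zygmund type, adapted to $\IT$. Since $\mathcal M_s\le \mathcal M_1\mathcal M_2$ and each $\mathcal M_i$ is of weak type $(1,1)$ on the doubling space $\IR_i$, the standard argument gives
\[
|\{\mathcal M_s f>s\}|\lesssim\int_{\IT}\frac{|f|}{s}\log\Bigl(e+\frac{|f|}{s}\Bigr)\,d\g.
\]
That argument splits $f$ at height $s/2$ and uses the iterated $L\log L\to L^1_{loc}$ Zygmund bound for $\mathcal M_2$ followed by weak $(1,1)$ for $\mathcal M_1$. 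With this bound the first term is handled at once: $|\{N>\lambda\}|\lesssim\|f/\lambda\|_{L\log^+L}$.

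\textbf{Step 4: the main obstacle.} The delicate point is the integral term. Plugging in the pointwise Step 3 bound produces
\[
\lambda^{-2}\int_0^\lambda s\cdot\frac{|f|}{s}\log\Bigl(e+\frac{|f|}{s}\Bigr)ds,
\]
which yields an extra logarithm. We avoid this by splitting $f=f\chi_{\{|f|>\lambda/4\}}+f\chi_{\{|f|\le\lambda/4\}}=:f_1+f_2$. Then $\mathcal M_s f>s$ implies $\mathcal M_s f_1>s/2$ or $\mathcal M_s f_2>s/2$.

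For $f_2$, the $L^2$ boundedness of $\mathcal M_s$ gives
\[
\lambda^{-2}\int_0^\lambda s\,|\{\mathcal M_s f_2>s/2\}|\,ds\lesssim\lambda^{-2}\|\mathcal M_sf_2\|_2^2\lesssim\lambda^{-2}\int_{\{|f|\le\lambda/4\}}|f|^2\le\int\frac{|f|}{\lambda}.
\]

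For $f_1$, Step 3 applied at each level $s$ gives
\[
\lambda^{-2}\int_0^\lambda s\int\frac{|f_1|}{s}\log\Bigl(e+\frac{|f_1|}{s}\Bigr)ds\,d\g.
\]
The inner integral is $\int_0^\lambda\log(e+|f_1|/s)\,ds\lesssim\lambda\log(e+|f_1|/\lambda)$, since $\log(e+a/s)\le\log(e+a/\lambda)+\log(\lambda/s)$ and $\int_0^\lambda\log(\lambda/s)\,ds=\lambda$. So this term is at most
\[
\lambda^{-1}\int|f_1|\log\Bigl(e+\frac{|f_1|}{\lambda}\Bigr)\lesssim\Bigl\|\frac{f}{\lambda}\Bigr\|_{L\log^+L}.
\]

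Summing the contributions of $f_1$ and $f_2$ with Step 2 gives the claim for $f\in C_0^\infty(\IT)$. It then extends to general $f$ by density: for $f_n\to f$ in $L^2$, $\SP f_n\to\SP f$ in $L^2$ by \eqref{e2.155}, so a subsequence converges a.e., and Fatou's lemma passes the weak-type bound to the limit.
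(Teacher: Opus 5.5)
Your proposal is correct and follows the paper's route: Theorem \ref{prod F-S thm}, then the layer-cake bound $\lambda^{-2}\int_0^\lambda 2\mu\,|E_\mu^c|\,d\mu$, then the weak-type estimate $|E_\mu^c|\lesssim\|f/\mu\|_{L\log^+L(\IT)}$. The paper simply cites that last estimate from \cite[Proposition 4.1]{CLLP2025}, whereas you derive it from $N^\beta_{\L_1,\L_2}f\lesssim_\beta\mathcal M_s f$ and the Jessen--Marcinkiewicz--Zygmund bound. The ``obstacle'' in Step 4 is not real: the inequality $\int_0^\lambda\log(e+a/s)\,ds\le 2\lambda\log(e+a/\lambda)$ that you prove for $f_1$ applies verbatim to $f$, so no extra logarithm appears and the split $f=f_1+f_2$ can be dropped, which is exactly how the paper finishes; in your density step, also take $f_n\to f$ in $L\log^+L(\IT)$, not only in $L^2$, so that the right-hand side passes to the limit.
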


\begin{proof}
We adopt the notation used in Theorem \ref{prod F-S thm}. Actually, from Theorem \ref{prod F-S thm}, we obtain
\begin{align}\label{eq:good-lambda}
    &\left|\big\{{\bf g}\in\IT\colon \Snp(f)(\g)>\lambda\big\}\right| \lesssim\left|E_\lambda^c\right|+\frac{1}{\lambda^{2}}\int_{E_\lambda}|N^\beta_{\L_1,\L_2}(f)(\g)|^2\,\mathrm{d}\g.
\end{align}

It follows from  the endpoint estimate of the maximal operator (see \cite[Proposition 4.1]{CLLP2025}) that $\left|E_\lambda^{c}\right|\lesssim \|f/\lambda\|_{L\log^+L(\IT)}$. This, together with the layer-cake representation, yields,
\begin{align*}
&\left|\big\{{\bf g}\in\IT\colon \Snp(f)(\g)>\lambda\big\}\right| \\
&\lesssim\left|E_\lambda^{c}\right|+\frac{1}{\lambda^{2}}\int_{0}^{\lambda}2\mu\left|E_\mu^{c}\right|\,\mathrm{d\mu}\\
&\lesssim \|f/\lambda\|_{L\log^+L(\IT)}+\frac{1}{\lambda^{2}}\int_{0}^{\lambda}\|f/\mu\|_{L\log^+L(\IT)}\mu\,\mathrm{d}\mu\\
&\lesssim \|f/\lambda\|_{L\log^+L(\IT)}+\frac{1}{\lambda}\int_{0}^{\lambda}\log\left(e+\frac{\lambda}{\mu}\right)\left\|\frac{f}{\lambda}\right\|_{L\log^+L(\IT)}\,\mathrm{d}\mu\\
&\lesssim \|f/\lambda\|_{L\log^+L(\IT)}.
\end{align*}
 This, together with the fact
$\SP(f)(\g)\leq \Snp(f)(\g)$, implies that 
 for all $\lambda\in\mathbb{R}_{+}$,
$$
\big|\big\{{\g}\in\IT\colon|\SP(f)(\g)|>\lambda\big\}\big| 
\lesssim \|f/\lambda\|_{L\log^+L(\IT)}.
$$
The proof is complete.
 \end{proof}

\medskip

 \section{Proof of Theorem \ref{lemma L log L atom}}\label{atomic decomposition}
 \setcounter{equation}{0}

In this section, we develop an atomic decomposition for the space $L\log^+ L(\IT)$, which plays a key role in the proof of Theorem \ref{thm1}. Inspired by \cite{FSt1982,HLMMY}, we provide the proof as follows.














\begin{proof}
Assume that $f\in L\log^+L(\IT)\cap L^2(\IT)$. Now for any $k\in\mathbb{Z}$, we set
\begin{eqnarray*}
\Omega_k&=&\{ \g\in\IT: \SP (f)(\g)>2^k \};\\[.2cm]
\widetilde{\Omega}_k&=& \{\g\in\IT: \mathcal{M}_s(\chi_{\Omega_k})(\g)>C_2/2 \};\\[.2cm]
\Omega_k^\dagger&=&\bigcup_{S\in \mathscr{M}(\widetilde{\Omega}_k)}3S;\\[.2pt]
\frak{B}_k&=&\{R\in\mathscr{R}(\IR_1\times\IR_2):\ |R\cap\Omega_k|\geq {1/
2}|R|,\ |R\cap\Omega_{k+1}|< {1/ 2}|R|\};
\end{eqnarray*}
where $C_2$ is as in Lemma \ref{R cap Omega>1/2R}.
For  $R=I\times J$,  we define the tent $R_+$ over $R$ to be the set 
\begin{align}
 R_+:=\big\{(\h,{\bf t}):\h \in R, {4C_*\l(I)}<t_1\leq 8C_*\l(I), {4C_*\l(J)}<t_2\leq 8C_*\l(J)\big\},
 \end{align}
where the constant $C_*$ is as in Theorem \ref{thm:hytonen-kairema}. This definition implies that if $(\h,\bf t)\in$ $R_+$, then 
\begin{align}
    R\subset B_1(h_1,4C_*\l(I))\times B_2(h_2,4C_*\l(J))\subset B_1(h_1,t_1)\times B_2(h_2,t_2),
\end{align}
and so $R_+\subset \Gamma(\g)$ for all $\g\in R$. Further, the space $(\IR_1\times\mathbb{R}_+)\times(\IR_2\times\mathbb{R}_+)$ is the disjoint union of all tents $R_+$ as $R$ runs over $\mathscr{R}(\IT)$.

 By  definition,  $\Omega_{k+1}\subset\Omega_k$, so the sequence $|R\cap \Omega_k|/|R|$ is decreasing and the sets $\frak{B}_k$ are pairwise disjoint. If $g\in R\in \frak{B}_k$, then $|R\cap \Omega_k|/|R|\geq 1/2$, so $g$ $\in \widetilde{\Omega}_k$ by Lemma \ref{R cap Omega>1/2R}. 
By the definition of $\frak{B}_k$, 
\begin{align}\label{5.2}
    |R\cap (\widetilde{\Omega}_k\backslash\Omega_{k+1})|=|R\backslash(R\cap \Omega_{k+1})|\geq \frac{1}{2}|R|.
\end{align}

 Let $\Phi$ be as in Lemma \ref{lemma finite speed}, define
$\psi(s)=c_{\Phi} s^{2M}\Phi(s)$, and recall that
 $Q_{t_1t_2}f=t_1\sqrt{\L_1}e^{-t_1\sqrt{\L_1}}\otimes t_2\sqrt{\L_2}e^{-t_2\sqrt{\L_2}}f$. From the
spectral theory \cite{Yo}, we have
\begin{eqnarray}\label{5.3}
f(\g)&=& \int_0^\infty\int_0^\infty\psi(t_1\sqrt{\L_1})\psi(t_2\sqrt{\L_2})Q_{t_1t_2}f(\g){\ud t_1\ud t_2 \over
t_1 t_2}\\
&=&\int_{(\IR_1\times\mathbb{R}_+)\times(\IR_2\times\mathbb{R}_+)}K_{\psi(t_1\sqrt{\L_1})}(g_1,h_1)K_{\psi(t_2\sqrt{\L_2})}(g_2,h_2)Q_{t_1t_2}f(\h)
{{{\bf {\rm d\h}}} \t \over t_1 t_2}\notag\\
&=&\sum_k\sum_{R\in
\frak{B}_k}\int_{R_+}K_{\psi(t_1\sqrt{\L_1})}(g_1,h_1)K_{\psi(t_2\sqrt{\L_2})}(g_2,h_2)Q_{t_1t_2}f(\h){{{\bf {\rm d\h}}} \t \over t_1 t_2}\notag\\
&=:&\sum_k a_k(\g),\notag
\end{eqnarray}

\noindent where $K_{\psi(t_i\sqrt{\L_i})}(g_1,h_1)$ denotes the kernel of $\psi(t_i\sqrt{\L_i}))$, $i=1,2.$

Next, we proceed by associating to each dyadic $R\in \frak{B}_k$ a maximal
dyadic subrectangle $\tilde{R}\in \mathscr{M}(\widetilde{\Omega}_k)$ such that
$R\subset \tilde{R}$. Then for each $S\in \mathscr{M}(\widetilde{\Omega}_k),$
set
 $$ a_{k,S}=\sum_{R\in \frak{B}_k:\ \tilde{R}=S }\hat{a}_{k,R},
 $$
 where
$$\hat{a}_{k,R}(\g):=\int_{R_+}K_{\psi(t_1\sqrt{\L_1})}(g_1,h_1)K_{\psi(t_2\sqrt{\L_2})}(g_2,h_2)Q_{t_1t_2}f(\h){{{\bf {\rm d\h}}} \t \over t_1 t_2}.$$
Then we can write
$$a_k=\sum\limits_{S\in
\mathscr{M}(\widetilde{\Omega}_k)} a_{k,S}.
$$
Observe that 
$$
\psi(t_i\sqrt{\L_i}) = c_{\Phi} t_i^{2M}\L_i^M\Phi(t_i\sqrt{\L_i}), \quad i=1,2.
$$
Arguing as in \eqref{5.3}, we  express $f(\g)$ as
\begin{align*}
f(\g)&=\int_0^\infty\int_0^\infty \big(t_1^{2M}\L_1^M\Phi(t_1\sqrt{\L_1})\big) \big(t_2^{2M}\L_2^M\Phi(t_2\sqrt{\L_2})\big) Q_{t_1t_2}f(\g) \frac{\ud t_1\ud t_2}{t_1 t_2}\\
&=\sum_k\sum_{R\in \frak{B}_k}\L_1^M\otimes\L_2^M \int_{R_+} t_1^{2M} K_{\Phi(t_1\sqrt{\L_1})}(g_1,h_1) t_2^{2M}K_{\Phi(t_2\sqrt{\L_2})}(g_2,h_2) Q_{t_1t_2}f(\h) \frac{\ud \h \ud t_1 \ud t_2}{t_1 t_2}.
\end{align*}
From this factorization, we can write the previously obtained $a_k(\g)$ and $a_{k,S}(\g)$ as 
$$a_k(\g)=\L_1^M\otimes \L_2^M b_k(\g) \quad \text{and} \quad a_{k,S}(\g)=\L_1^M\otimes\L_2^M b_{k,S}(\g).$$ 
Here, we define $b_{k} = \sum_{S\in \mathscr{M}(\widetilde{\Omega}_k)} b_{k,S}$, with each $b_{k,S}$ given by
\begin{equation*}
b_{k,S}(\g) = \sum_{R\in \frak{B}_k:\widetilde{R}=S} \int_{R_+} t_1^{2M} K_{\Phi(t_1\sqrt{\L_1})}(g_1,h_1) t_2^{2M} K_{\Phi(t_2\sqrt{\L_2})}(g_2,h_2) Q_{t_1t_2}f(\h) \frac{\ud h_1 \ud t_1}{t_1} \frac{\ud h_2 \ud t_2}{t_2}.
\end{equation*}
\smallskip
For each $k$ and $S\in \mathscr{M}(\widetilde{\Omega}_k)$, it follows by Lemma \ref{lemma finite speed} that
$\L_1^{i_1}\otimes \L_2^{i_2}(b_{k,S})$ is supported in $3S$ for all
$0\leq i_1,i_2\leq M$, and hence $a_k$ is supported in
$\Omega^\dagger_k$. 
Invoking \cite[Lemma 3.5]{CLLP2025} alongside the strong maximal function theorem and Corollary~\ref{lemma auxiliary}, we  obtain that
$$ |\Omega_k^\dagger|\lesssim|\widetilde{\Omega}_k|\leq C|\Omega_k|\leq C\|2^{-k}f\|_{L\log^+L(\IT)}. $$

Next, we claim that for each $k\in{\mathbb Z}$,
\begin{eqnarray}\label{claim LlogL atom a k}
 \|a_k\|_{L^2(\IT)}^2\leq C2^{2k}\|2^{-k}f\|_{L\log^+L(\IT)},
\end{eqnarray}

\noindent
and for  every $0\leq i_1,i_2\leq M$,

\begin{eqnarray}
 \sum_{S\in
\mathscr{M}(\widetilde{\Omega}_k)}\l(I)^{-4M}\l(J)^{-4M}\big\|(\l(I)^2\L_1)^{i_1}\otimes(\l(J)^2\L_2)^{i_2}
b_{k,S}\big\|_{L^2(\IT)}^2\lesssim 2^{2k}\left\|{f\over 2^{k}}\right\|_{L\log^+L(\IT)}. \label{claim LlogL atom b k}
\end{eqnarray}

\noindent

\smallskip

To see this, for any $\varphi\in
L^2(\IT)$ with $\|\varphi\|_{L^2(\IT)}=1$, we
have

\begin{eqnarray*}\label{eah1}
 \big|\langle a_k,\varphi\rangle\big|
&=&\bigg|\sum_{R\in \frak{B}_k}
\int_{R_+}\psi(t_1\sqrt{\L_1})\psi(t_2\sqrt{\L_2})\varphi(\h)
Q_{t_1t_2}f(\h){{{\bf {\rm d\h}}} \t \over t_1 t_2}\bigg|\\[5pt]
&\leq& \bigg( \sum_{R\in \frak{B}_k} \int_{R_+} \big|
\psi(t_1\sqrt{\L_1})\psi(t_2\sqrt{\L_2})\varphi(\h)
\big|^2{{{\bf {\rm d\h}}} \t \over t_1 t_2} \bigg)^{1/2}\\
&&\hskip 1cm \times\bigg( \sum_{R\in \frak{B}_k} \int_{R_+} \big|
Q_{t_1t_2}f(\h)\big|^2{{{\bf {\rm d\h}}} \t \over t_1 t_2} \bigg)^{1/2}\\[5pt]
&\leq& C\|\varphi\|_{L^2(\IT)} \bigg(
\sum_{R\in \frak{B}_k} \int_{R_+} \big| Q_{t_1t_2}f(\h)\big|^2{{{\bf {\rm d\h}}}
\t \over t_1 t_2} \bigg)^{1/2}.
\end{eqnarray*}

\noindent To continue, we observe  that

\begin{eqnarray*}\label{eah2}
\int_{\widetilde{\Omega}_k\setminus \Omega_{k+1}}
|\SP (f)(\g)|^2{\rm d\g}  &\leq&
C2^{2k}|\widetilde{\Omega}_k|\leq C2^k \cdot 2^k|\Omega_k|\leq C2^{2k}\|2^{-k}f\|_{L\log^+L(\IT)}.
\end{eqnarray*}

\noindent Combining this with \eqref{5.2}, we deduce that

\begin{eqnarray*}\label{eah3}
&&\int_{\widetilde{\Omega}_k\setminus \Omega_{k+1}}
|\SP (f)(\g)|^2{\rm d\g}  \\[5pt]
&&=\int\limits_{(\IR_1\times \mathbb{R}_+)\times(\IR_2\times \mathbb{R}_+)}\Big|
\big\{\g\in\widetilde{\Omega}_k\setminus \Omega_{k+1}:\
\rho_i(g_i,h_i)<t_i, i=1,2 \big\} \Big|
  \big|
Q_{t_1t_2}f(\h)\big|^2{{{\bf {\rm d\h}}} \t \over t_1^{Q_1+1} t_2^{Q_2+1}}\\[5pt]
&&\geq \sum_{R\in \frak{B}_k}\int_{R_+}\Big|
\big\{\g\in\widetilde{\Omega}_k\setminus \Omega_{k+1}:\
\rho_i(g_i,h_i)<t_i, i=1,2 \big\} \Big|
  \big|
Q_{t_1t_2}f(\h)\big|^2{{{\bf {\rm d\h}}} \t \over t_1^{Q_1+1} t_2^{Q_2+1}}\\[5pt]
&&\geq {1\over 2}\sum_{R\in \frak{B}_k} \int_{R_+} \big|
Q_{t_1t_2}f(\h)\big|^2{{{\bf {\rm d\h}}} \t \over t_1 t_2}.
\end{eqnarray*}

\noindent
From  the estimates above, we have 
$$|\langle a_k,\varphi\rangle|\leq C2^{k}\|2^{-k}f\|^{1/2}_{L\log^+L(\IT)}.$$
This implies (\ref{claim LlogL atom a k}).
 The proof of (\ref{claim LlogL atom b k}) is similar to that of (\ref{claim LlogL atom a k}), and so we omit it.
This completes the proof of Theorem \ref{lemma L log L atom}.
\end{proof}

\medskip

\section{Proof of Theorem \ref{thm1}
}\label{application}
\setcounter{equation}{0}

In this section, we obtain endpoint estimates for a class of singular integrals with non‑smooth kernels, including the area operator and the double Riesz transforms associated with Schrödinger operators.

Suppose that $f\in L^2(\IT)\bigcap L(\log^+L)(\IT)$. By Theorem \ref{lemma L log L atom},   one may write
\begin{align*}
f=\sum_{k\in\mathbb{Z}}a_k,\,\,a_k=\sum_{S\in\mathscr{M}(\widetilde{\Omega}_k)}a_{k,S},\,\,\text{and}\,\, a_{k,S}=(\L_1^M\otimes \L^M_2)b_{k,S},
\end{align*}
where $a_k$ and $a_{k,S}$ satisfy (1) and (2) of  Theorem \ref{lemma L log L atom}.

Now let us define
\begin{align*}
    \widetilde{\widetilde{\Omega}}_k:=\{\g\in\IT:\,\mathcal{M}_s(\chi_{\widetilde{\Omega}_k})(\g)>C_2/2\},
\end{align*}
and
\begin{align*}
    \widetilde{\widetilde{\widetilde{\Omega}}}_k:=\{\g\in\IT:\,\mathcal{M}_s(\chi_{\widetilde{\widetilde{\Omega}}_k})(\g)>C_2/2\}.
\end{align*}

\subsection{Proof of \eqref{estimate1.2} for the area function $\S$}\label{4.1}


 Noting that $\S$ is a non-negative sublinear operator  and $L^2(\IT)\bigcap L(\log^+ L)(\IT)$ is dense in $L(\log^+ L)(\IT)$ (\cite[Proposition 2.6]{CLLP2025}), it suffices to prove that 
 \begin{align}\label{T from L log L to weak L1}
 \bigg|\{\g\in\IT: \S(f)(\g)>1 \}\bigg|\lesssim \|f\|_{L\log^+L(\IT)},  
 \end{align}
for all $f\in L^2(\IT)\bigcap L(\log^+ L)(\IT)$.

For $f\in L\log^+L(\IT)\bigcap L^2(\IT)$.   Theorem \ref{lemma L log L atom} gives $f=\sum_ka_k$ and 
\begin{align}\label{S-k<0}
    \Big\|\sum_{k\leq 0}a_k\Big\|_{L^2(\IT)}&\leq \sum_{k\leq0}\|a_k\|_{L^2(\IT)}\nonumber\\
    &\lesssim \sum_{k\leq0}2^k\|2^{-k}f\|_{L\log^+L(\IT)}^{1/2}\lesssim \|f\|_{L\log^+L(\IT)}^{1/2} .
\end{align}
Then we apply  the $L^2(\IT)$ boundedness of $\S$ to obtain
\begin{align}
    \left|\left\{\g\in\IT:\,\S\left(\sum_{k\leq 0}a_k\right)(\g)>1\right\}\right|\lesssim \Big\|\sum_{k\leq0}a_k\Big\|_{L^2(\IT)}^2\lesssim \|f\|_{L\log^+L(\IT)}.\label{3.17}
\end{align}

To estimate $\S(\sum_{k\geq 1}a_k)$,  consider the atoms $a_k,\,k>0$ and their structure. Each $a_k$ is supported in $\Omega_k^\dagger$ with $|\Omega_k^\dagger|\lesssim \|2^{-k}f\|_{L\log^+L(\IT)}$, and $a_k=\sum_{S\in \mathscr{M}(\widetilde{\Omega}_k)  }a_{k,S}$.

For all $S=I\times J\subset\widetilde{\Omega}_k$, let $\hat{I}$ be the largest dyadic cube containing $I$ such that $\hat{I}\times J\subset \widetilde{\widetilde{\Omega}}_k$. Next, let $\hat{J}$ be the largest dyadic cube containing $J$ such that $\hat{I}\times\hat{J}\subset \widetilde{\widetilde{\widetilde{\Omega}}}_k$. Finally, let $S^{\dagger}$ be $100\beta_k(\hat{I}\times\hat{J})$, where $\beta_k=2^{k/(2Q_1+2Q_2)}$. 
Applying \cite[Lemma 3.5]{CLLP2025}, the strong maximal function theorem and Corollary~\ref{lemma auxiliary},
\begin{align}\label{3.11}
    \bigg|\bigcup_{S\in \mathscr{M}(\widetilde{\Omega}_k)}S^\dagger\bigg|\lesssim2^{k/2}|\widetilde{\widetilde{\widetilde{\Omega}}}_k|\lesssim 2^{k/2}|\Omega_k|\lesssim 2^{k/2}\|2^{-k}f\|_{L\log^+L(\IT)}.
\end{align}

We claim that there exists some $\delta>0$ such that
\begin{align}\label{claim s.1}
    \sum_{S\in \mathscr{M}(\widetilde{\Omega}_k)}\int_{(S^\dagger)^c}|\S(a_{k,S})(\g)|\,{\rm d\g}\lesssim 2^{-\delta k}\|f\|_{L\log^+L(\IT)},\quad \forall k\in\mathbb{N}.
\end{align}

Assume \eqref{claim s.1} holds and let $E^\dagger:=\bigcup_{k\geq1}\bigcup_{S\in\mathscr{M}(\widetilde{\Omega}_k)}S^\dagger$.
It follows that
\begin{align*}
\int_{(E^\dagger)^c}\Big|\S\Big(\sum_{k\geq1} a_{k}\Big)(\g)\Big|\,\ud\g&\lesssim \sum_{k\geq 1}\sum_{S\in\mathscr{M}(\widetilde{\Omega}_k)}
\int_{(S^\dagger)^c}|\S(a_{k,S})(\g)|\,{\ud \g}\\
&\lesssim \sum_{k\geq 1}2^{-\delta k}\|f\|_{L\log^+L(\IT)}\\
&\lesssim \|f\|_{L\log^+L(\IT)}.
\end{align*}
On the other hand, from  \eqref{3.11} we obtain
\begin{align*}
    |E^\dagger|\lesssim \sum_{k\geq1}2^{k/2}\|2^{-k}f\|_{L\log^+L(\IT)}\lesssim \|f\|_{L\log^+L(\IT)} .
\end{align*}
By Chebyshev's inequality, we have
\begin{align*}
    &\bigg|\bigg\{\g\in\IT:\,\S\left(\sum_{k\geq1}a_k\right)(\g)>1\bigg\}\bigg|\\
    &\qquad\leq |E^\dagger|+\int_{(E^\dagger)^c}\bigg|\S\left(\sum_{k\geq1}a_k\right)(\g)\bigg|\,{\rm d}\g\\
    &\qquad\lesssim \|f\|_{L\log^+L(\IT)}.
\end{align*}
This, together with \eqref{3.17},  implies \eqref{T from L log L to weak L1}.

\smallskip

It remains to prove \eqref{claim s.1}. Now 
\begin{align*}
    \int_{(S^\dagger)^c}&\big|\S(a_{k,S})(\g)\big|\,\rm{d}\g\\
    &\leq \int_{(100\beta_k\hat{I})^c}\int_{100J}\big|\S(a_{k,S})(\g)\big|\,{\rm d\g}   +\int_{(100\beta_k\hat{I})^c}\int_{(100J)^c}\big|\S(a_{k,S})(\g)\big|\,\rm{d}\g\\    &\qquad+\int_{(100\beta_k\hat{J})^c}\int_{100I}\big|\S(a_{k,S})(\g)\big|\,{\rm d\g} +\int_{(100\beta_k\hat{J})^c}\int_{(100I)^c}\big|\S(a_{k,S})(\g)\big|\,\rm{d}\g\\
    &=:\frak{I}_1+\frak{I}_2+\frak{I}_3+\frak{I}_4.
\end{align*}
It suffices to control $\frak{I}_1$ and $\frak{I}_2$, as the other two terms are similar.

By H\"older's inequality and the $L^2(\mathcal{G}_2)$ boundedness of $\S$,
\begin{align}
    \frak{I}_1&\lesssim |J|^{1/2} \int_{(100\beta_k\hat{I})^c}\left(\int_{100J}\left|\S^{[1]}(a_{k,S})(g_1,g_2)\right|^2\,{\ud g_2}\right)^{1/2}\,\ud g_1,\label{I_1}
\end{align}
where 
\begin{align*}
    |\S^{[1]}(a_{k,S})(g_1,g_2)|^2&:=\iint_{\Gamma(g_1)}|t_1^2\L_1e^{-t_1^2\L_1}(a_{k,S})(h_1,g_2)|^2\,\cfrac{\ud h_1\ud t_1}{t_1^{Q_1+1}}.
\end{align*}
To estimate it, we split the integral as follows:
 \begin{align*}
     |\S^{[1]}(a_{k,S})(g_1,g_2)|^2     &=\int_0^{\l(I)}\int_{\rho_1(h_1,g_1)<t_1}|t_1^2\L_1e^{-t_1^2\L_1}(a_{k,S})(h_1,g_2)|^2\,\cfrac{\ud h_1\ud t_1}{t_1^{Q_1+1}}\\
     &\qquad+\int_{\l(I)}^\infty\int_{\rho_1(h_1,g_1)<t_1}|t_1^2\L_1e^{-t_1^2\L_1}(a_{k,S})(h_1,g_2)|^2\,\cfrac{\ud h_1\ud t_1}{t_1^{Q_1+1}}\\
     &=:\frak{I}_{11}+\frak{I}_{12}.
 \end{align*}
 
For $\frak{I}_{11}$, by the heat kernel estimate, we have
\begin{align*}
    |t_1^2\L_1e^{-t_1^2\L_1}(a_{k,S})(h_1,g_2)|&\lesssim \int_{\G}\cfrac{t_1}{(t_1+\rho_1(h_1,z_1))^{Q_1+1}}|a_{k,S}(z_1,g_2)|\,\ud z_1.
\end{align*}
By the support restriction on $a_{k,S}$ in the first variable, we have $z_1\in 3I$. If $g_1\notin100\beta_k\hat{I}$ and $\rho_1(h_1,g_1)<t_1$,   then $\rho_1(g_1,g_I)\approx\rho_1(g_1,z_1)\lesssim\rho_1(g_1,h_1)+\rho_1(h_1,z_1)<t_1+\rho_1(h_1,z_1)$, where $g_I$ denotes the center of cube $I$. Furthermore, one may apply H\"older's  inequality to obtain
\begin{align*}
    |t_1^2\L_1e^{-t_1^2\L_1}(a_{k,S})(h_1,g_2)|&\lesssim \cfrac{t_1}{\rho_1(g_1,g_I)^{Q_1+1}}\|a_{k,S}(\cdot,g_2)\|_{L^1(\G)}\\
    &\lesssim |I|^{\frac{1}{2}}\cfrac{t_1}{\rho_1(g_1,g_I)^{Q_1+1}}\|a_{k,S}(\cdot,g_2)\|_{L^2(\G)}.
\end{align*}
We deduce that  
\begin{align}\label{ee-S-I11}
    \frak{I}_{11}&\lesssim \int_0^{\l(I)}\int_{\rho_1(h_1,g_1)<t_1} |I|\cfrac{t_1^2}{\rho_1(g_1,g_I)^{2(Q_1+1)}}\|a_{k,S}(\cdot,g_2)\|^2_{L^2(\G)}\ \,\cfrac{\ud h_1\ud t_1}{t_1^{Q_1+1}}\nonumber\\
    &\approx |I|\int_0^{\l(I)}\cfrac{t_1^2}{\rho_1(g_1,g_I)^{2(Q_1+1)}}\|a_{k,S}(\cdot,g_2)\|^2_{L^2(\G)}\ \,\cfrac{\ud t_1}{t_1}\\
&\approx\cfrac{\l(I)^2|I|}{\rho_1(g_1,g_I)^{2(Q_1+1)}}\|a_{k,S}(\cdot,g_2)\|_{L^2(\G)}^2. \nonumber
\end{align}

Let us estimate  $\frak{I}_{12}$. To  simplify the notation, we introduce
$$B_{k,S}(\cdot,g_2):=\l(I)^{-2}\l(J)^{-2}(\l(I)^2\L_1)^0\otimes(\l(J)^2\L_2)(b_{k,S})(\cdot,g_2).$$
By extracting the scaling factor $\l(I)/t_1$, one can express the term as
\begin{align*}
    |t_1^2\L_1e^{-t_1^2\L_1}a_{k,S}(h_1,g_2)|
    &= \left|t_1^2\L_1e^{-t_1^2\L_1}(\L_1\otimes\L_2)b_{k,S}(h_1,g_2)\right|\\
    &= \left(\frac{\l(I)}{t_1}\right)^2 \left| (t_1^2\L_1)^2e^{-t_1^2\L_1} B_{k,S}(h_1,g_2) \right|.
\end{align*}
Applying the heat kernel estimate \eqref{estimate heat kernel} with $k=2$, we have
\begin{align*}
    |t_1^2\L_1e^{-t_1^2\L_1}a_{k,S}(h_1,g_2)|
    &\lesssim \left(\frac{\l(I)}{t_1}\right)^2 \int_{\G} t_1^{-Q_1}e^{-\frac{\rho_1(h_1,z_1)^2}{t_1^2}} |B_{k,S}(z_1,g_2)| \ud z_1 \\
    &\lesssim \left(\frac{\l(I)}{t_1}\right)^2 \int_{\G} \frac{t_1}{(t_1+\rho_1(h_1,z_1))^{Q_1+1}} |B_{k,S}(z_1,g_2)| \ud z_1.
\end{align*}
Since $z_1\in 3I$, $g_1\notin100\beta_k\hat{I}$ and $\rho_1(g_1,h_1)<t_1$,  we have 
\begin{align*}
    \rho_1(g_1,g_I)\approx\rho_1(g_1,z_1)\lesssim\rho_1(h_1,z_1) +t_1.
\end{align*}
Inserting this into the previous integral yields
\begin{align*}
    \left|t_1^2\L_1e^{-t_1^2\L_1}a_{k,S}(h_1,g_2)\right|
    \lesssim \left(\cfrac{\l(I)}{t_1}\right)^2\cfrac{t_1}{\rho_1(g_1,g_I)^{Q_1+1}}\|B_{k,S}(\cdot,g_2)\|_{L^1(\G)}.
    \end{align*}
Substituting this bound into the integral defining $\frak{I}_{12}$, we deduce that
\begin{align}\label{ee-S-I12}
   \frak{I}_{12}&\lesssim\int_{\l(I)}^\infty \int_{\rho_1(h_1,g_1)<t_1} \left(\frac{\l(I)}{t_1}\right)^4\cfrac{t_1^2}{\rho_1(g_1,g_I)^{2(Q_1+1)}}\|B_{k,S}(\cdot,g_2)\|_{L^1(\G)}^2\cfrac{\ud h_1\ud t_1}{t_1^{Q_1+1}}\nonumber\\
&\approx\int_{\l(I)}^\infty \left(\cfrac{\l(I)}{t_1}\right)^4\cfrac{t_1^2}{\rho_1(g_1,g_I)^{2(Q_1+1)}}\|B_{k,S}(\cdot,g_2)\|_{L^1(\G)}^2\cfrac{\ud t_1}{t_1}\\
    &\approx \cfrac{\l(I)^2\|B_{k,S}(\cdot,g_2)\|_{L^1(\G)}^2}{\rho_1(g_1,g_I)^{2(Q_1+1)}}.\nonumber
\end{align}

From above estimates, we have
\begin{align*}
    \frak{I}_1&\lesssim |J|^{1/2}\int_{(100\beta_k\hat{I})^c}\left(\int_{100J}\frak{I}_{11}+\frak{I}_{12}\,{\ud  g_2}\right)^{1/2}\,\ud g_1\\
    &\lesssim |J|^{1/2}\int_{(100\beta_k\hat{I})^c}\left(\int_{100J}\frak{I}_{11}\,{\ud g_2}\right)^{1/2}\,\ud g_1+|J|^{1/2}\int_{(100\beta_k\hat{I})^c}\left(\int_{100J}\frak{I}_{12}\,{\ud g_2}\right)^{1/2}\,\ud g_1
\end{align*}

By \eqref{ee-S-I11}, we have
\begin{align}\label{ee-S-I110}
&|J|^{1/2}\int_{(100\beta_k\hat{I})^c}\left(\int_{100J}\frak{I}_{11}\,{\ud g_2}\right)^{1/2}\,\ud g_1\nonumber\\
    &\lesssim |S|^{\frac{1}{2}}\int_{(100\beta_k\hat{I})^c}\left(\int_{100J}\|a_{k,S}(\cdot,g_2)\|^2_{L^2(\G)}\,\cfrac{\l(I)^2}{\rho_1(g_1,g_I)^{2Q_1+2}}dg_2\right)^{\frac{1}{2}}\ud g_1\\
    &\lesssim |S|^{\frac{1}{2}}(\beta_k)^{-1}\gamma_1(S)^{-1}\|a_{k,S}\|_{L^2(\IT)},\nonumber
\end{align}
where in the last inequality we have used the definition of $\gamma_1(S)$.

 Notice that the support of $B_{k,S}(\cdot,g_2)$ in the first variable is contained in $3I$. By H\"older's inequality, we have $$\|B_{k,S}(\cdot,g_2)\|_{L^1(\G)} \lesssim |I|^{\frac{1}{2}}\|B_{k,S}(\cdot,g_2)\|_{L^2(\G)}.$$
This estimate, together with  \eqref{ee-S-I12}, implies 
\begin{align}\label{ee-S-I120}
& |J|^{1/2}\int_{(100\beta_k\hat{I})^c}\left(\int_{100J}\frak{I}_{12}\,{\ud g_2}\right)^{1/2}\,\ud g_1 \nonumber\\  
     &\lesssim |I|^{\frac{1}{2}}|J|^{\frac{1}{2}} \int_{(100\beta_k\hat{I})^c} \frac{\ell(I)}{\rho_1(g_1,g_I)^{Q_1+1}} \ud g_1 \left(\int_{100J}\|B_{k,S}(\cdot,g_2)\|_{L^2(\G)}^2 \ud g_2\right)^{\frac{1}{2}}\\
     &\lesssim |S|^{\frac{1}{2}}\beta_k^{-1}\gamma_1(S)^{-1}\|B_{k,S}\|_{L^2(\IT)}.\nonumber
\end{align}

\noindent Combining \eqref{ee-S-I110} and \eqref{ee-S-I120}, we have
\begin{align*}
    \frak{I}_1
    &\lesssim |S|^{1/2}(\beta_k)^{-1}\gamma_1(S)^{-1}\left(\|a_{k,S}\|_{L^2(\IT)}+\|B_{k,S}\|_{L^2(\IT)}\right).
\end{align*}

\smallskip

Consider $\frak{I}_2$.  Define
\begin{equation*}
    \frak{Q}_{t_1,t_2}(a_{k,S}) := \left| t_1^2 \mathcal{L}_1 e^{-t_1^2 \mathcal{L}_1} \otimes t_2^2 \mathcal{L}_2 e^{-t_2^2 \mathcal{L}_2} a_{k,S} \right|^2 .
\end{equation*}
For $g_1 \in (100\beta_k\hat{I})^c$ and $g_2 \in (100J)^c$,  we can decompose the square function into four parts:
\begin{align*}
    &|\mathcal{S}_{\mathcal{L}_1, \mathcal{L}_2}(a_{k,S})(g_1, g_2)|^2\\
       &=\left(\int_0^{\ell(I)} \!\!\!\int_0^{\ell(J)}\!\! +\int_0^{\ell(I)}\!\!\! \int_{\ell(J)}^\infty+\int_{\ell(I)}^\infty \!\int_0^{\ell(J)} + \int_{\ell(I)}^\infty \!\int_{\ell(J)}^\infty  \right)\int_{\substack{\rho_1(h_1,g_1)<t_1 \\ \rho_2(h_2,g_2)<t_2}} \frak{Q}_{t_1,t_2}(a_{k,S})(\mathbf{h})\, \frac{\ud \mathbf{h} \ud t_1 \ud t_2}{t_1^{Q_1+1} t_2^{Q_2+1}}
       \\
    &=: \frak{I}_{21} + \frak{I}_{22} + \frak{I}_{23} + \frak{I}_{24}.
\end{align*}

%

For $\frak{I}_{24}$, with the help of Theorem \ref{lemma L log L atom}, we obtain
\begin{align*}
    &\bigg|t_1^2\L_1e^{-t_1^2\L_1}\otimes t_2^2\L_2e^{-t_2^2\L_2}(a_{k,S})(\h)\bigg|\\
    &\qquad=t_1^{-2}t_2^{-2} \left|(t_1^2\L_1)^2e^{-t_1^2\L_1}\otimes (t_2^2\L_2)^2e^{-t_2^2\L_2}b_{k,S}(\h)\right|
   \\
    &\qquad\lesssim t_1^{-2}t_2^{-2}\int_{\mathcal{G}_2}\int_{\mathcal{G}_1}\frac{t_1}{(t_1+\rho_1(h_1,z_1))^{Q_1+1}}\cfrac{t_2}{(t_2+\rho_2(h_2,z_2))^{Q_2+1}}|b_{k,S}(z_1,z_2)|\,{\ud z_1\ud z_2}.
\end{align*}
Observe that $g_1\notin 100\beta_k\hat{I}$, $g_2\notin 100\beta_k\hat{J}$ and $t_i>\rho_i(h_i,g_i)$ for $i=1,2$. Consequently, the quasi-triangle inequality immediately yields 
\begin{equation*}
t_i+\rho_i(h_i,z_i)>\rho_i(h_i,g_i)+\rho_i(h_i,z_i) \gtrsim \rho_i(g_i,z_i),\,i=1,2.
\end{equation*}
 Furthermore, we have $z_1\in 3I$ and $z_2\in 3J$ due to the support of $b_{k,S}.$ 
This implies that $\rho_1(g_1,z_1)\approx\rho_1(g_1,g_I)$ and $\rho_2(g_2,z_2)\approx\rho_2(g_2,g_J)$; here $g_I$ and $g_J$ denote the centers of $I$ and $J$, respectively. Thus, we  obtain
\begin{align*}
   &\left|t_1^2\L_1e^{-t_1^2\L_1}\otimes t_2^2\L_2e^{-t_2^2\L_2}(a_{k,S})(\h)\right|\\
   &\qquad\lesssim \frac{1}{t_1t_2}\frac{1}{\rho_1(g_1,g_I)^{Q_1+1}\rho_2(g_2,g_J)^{Q_2+1}}\int_{\IT}|b_{k,S}(z_1,z_2)|\,{\ud z_1 \ud z_2}\\
    &\qquad\lesssim \frac{1}{t_1t_2}\frac{1}{\rho_1(g_1,g_I)^{Q_1+1}\rho_2(g_2,g_J)^{Q_2+1}}|S|^{\frac{1}{2}}\|b_{k,S}\|_{L^2(\IT)}.
\end{align*}
Substituting this estimate into the integral defining $\frak{I}_{24}$, we have
\begin{align*}
    \frak{I}_{24}&\lesssim |S|\int_{\l(I)}^\infty\int_{\l(J)}^\infty \left(\frac{1}{t_1t_2}\right)^2\|b_{k,S}\|_{L^2(\IT)}^2\frac{1}{\rho_1(g_1,g_I)^{2Q_1+2}\rho_2(g_2,g_J)^{2Q_2+2}}\,\frac{{\ud t_1\ud t_2}}{t_1t_2}\\
    &\lesssim |S|\frac{\l(I)^2\l(J)^2}{\rho_1(g_1,g_I)^{2Q_1+2}\rho_2(g_2,g_J)^{2Q_2+2}}\|\l(I)^{-2}\l(J)^{-2}b_{k,S}\|^2_{L^2(\IT)}.
\end{align*}

The calculations for the remaining terms $\frak{I}_{21},\frak{I}_{22},$ and $\frak{I}_{23}$ are similar to those for $\frak{I}_{24}$.
Gathering the preceding estimates, we obtain
\[
\frak{I}_2\lesssim|S|^{\frac{1}{2}}(\beta_k)^{-1}\gamma_1(S)^{-1}\|\l(I)^{-2}\l(J)^{-2}b_{k,S}\|_{L^2(\IT)}.
\]

To sum $\frak{I}_1$ and $\frak{I}_2$ over all dyadic rectangles $S\in \mathscr{M}(\widetilde{\Omega}_k)$, we  apply Lemma \ref{Journetype} and the Cauchy--Schwarz inequality to obtain
\begin{align}\label{ee-6-sum}
    &\sum_{S\in \mathscr{M}(\widetilde{\Omega}_k)} \frak{I}_1 +\frak{I}_2\nonumber\\
    &\lesssim \beta_k^{-1} \sum_{S\in \mathscr{M}(\widetilde{\Omega}_k)} \gamma_1(S)^{-1} |S|^{\frac{1}{2}} \Big( \|a_{k,S}\|_{L^2(\IT)} + \|B_{k,S}\|_{L^2(\IT)}+ \|\l(I)^{-2}\l(J)^{-2}b_{k,S}\|_{L^2(\IT)} \Big) \nonumber\\
    &\lesssim \beta_k^{-1}  \left( \sum_{S\in\mathscr{M}(\widetilde{\Omega}_k)} \Big( \|a_{k,S}\|_{L^2(\IT)}^2+\|B_{k,S}\|_{L^2(\IT)}^2 + \|\l(I)^{-2}\l(J)^{-2}b_{k,S}\|_{L^2(\IT)}^2 \Big) \right)^{\frac{1}{2}} \nonumber\\
    &\hskip 1.2cm \qquad \times \left( \sum_{S\in\mathscr{M}(\widetilde{\Omega}_k)} \gamma_1(S)^{-2} |S| \right)^{\frac{1}{2}} \\
    &\lesssim \beta_k^{-1} |\Omega_k|^{\frac{1}{2}} \bigg(2^{2k}\|2^{-k}f\|_{L\log^+L(\IT)}\bigg)^{\frac{1}{2}} \nonumber\\
    &\lesssim 2^{-\frac{k}{2(Q_1+Q_2)}}\|f\|_{L\log^+L(\IT)}. \nonumber
\end{align}

Similar estimates hold for  the sums of the remaining terms $\frak{I}_3$ and $\frak{I}_4$. Therefore, we have shown that
\begin{equation*}
    \sum_{S\in \mathscr{M}(\widetilde{\Omega}_k)} \int_{(S^\dagger)^c} |\S(a_{k,S})(\g)|\,\ud \g \lesssim 2^{-\frac{k}{2(Q_1+Q_2)}}\|f\|_{L\log^+L(\IT)}.
\end{equation*}
This establishes \eqref{claim s.1}, thereby completing the proof.

\medskip
 
\noindent

\subsection{Proof of \eqref{estimate1.2} for the double Riesz transform}

 Consider the double Riesz transform $\R:=\nabla_{g_1} \L_1^{-1/2}\otimes \nabla_{g_2} \L_2^{-1/2}$
associated to the operators $\L_i,\,i=1,2$. It is known that
\begin{eqnarray}\label{e8.16}
\R f(\g)={1\over 4 {\pi}}\int_0^{\infty}\int_0^{\infty}\big(\nabla_{g_1} e^{-t_1\L_1}\otimes \nabla_{g_2}
e^{-t_2\L_2}\big)f(\g){{\ud t_1\ud t_2}\over \sqrt{t_1 t_2}}.
\end{eqnarray}

\noindent
It was proved in   \cite{Sikora2004}  that for every $i=1,2,$ the Riesz transform
$\nabla_{g_i} \L_i^{-1/2}$ is bounded on $L^p(\IR )$ for $1<p\leq 2$. See also  \cite{CD, DOY, Ouhabaz2005}.
Hence, by using an iteration argument, the double Riesz transform $\R$
 is bounded on $L^p({\IT})$ for $1<p\leq 2$.

 \medskip

We now prove \eqref{estimate1.2} for $\R$. It suffices to show that for all $f\in L\log^+L({\IT})\bigcap L^2({\IT})$,
\begin{align}\label{Riesz goal}
    \bigg|\big\{\g\in \IT:\,|\R(f)(\g)|>1\big\}\bigg|\leq C\|f\|_{L\log^+L(\IT)}.
\end{align}

For $f\in L\log^+L({\IT})\bigcap L^2({\IT})$,  Theorem \ref{lemma L log L atom} yields the atomic decomposition $f=\sum_ka_k$. Using \eqref{S-k<0} and the $L^2(\IT)$ boundedness of $\R$, we obtain
\begin{align}\label{3.18}
    \left|\left\{\g\in\IT:\,\R\left(\sum_{k\leq 0}a_k\right)(\g)>1\right\}\right|\lesssim \|f\|_{L\log^+L(\IT)}.
\end{align}

It remains to estimate $\R(\sum_{k\geq 1}a_k)$.  For the atom $a_k$ with $k>0$, note that $a_k$ is supported in $\Omega_k^\dagger$, with $|\Omega_k^\dagger|\lesssim \|2^{-k}f\|_{L\log^+L(\IT)}$, and  that $a_k=\sum_{S\in \mathscr{M}(\widetilde{\Omega}_k)  }a_{k,S}$.
For each $S=I\times J\subset\widetilde{\Omega}_k$,  we use the notations $\hat{I}, \hat{J}, S^{\dagger}$ from Section \ref{4.1}.


We will prove that there exists  some $\delta>0$ such that
\begin{align}\label{claim s.2}
    \sum_{S\in \mathscr{M}(\widetilde{\Omega}_k)}\int_{(S^\dagger)^c}|\R(a_{k,S})(\g)|\,{\rm d}\g\lesssim 2^{-\delta k}\|f\|_{L\log^+L(\IT)},\quad \forall k\in\mathbb{N}.
\end{align}
\eqref{Riesz goal} follows immediately from \eqref{claim s.2} and \eqref{3.18}. 

We now prove \eqref{claim s.2}. To this end,  decompose the integral over $(S^\dagger)^c$ as follows.
\begin{align*}
&\int_{(S^\dagger)^c} |\R(a_{k,S})(g_1,g_2)|\ud \g\\
&\leq \left(\int_{(100\beta_k\hat{I})^c}\int_{100J}+\int_{(100\beta_k\hat{I})^c}\int_{(100J)^c}+\int_{(100\beta_k\hat{J})^c}\int_{100I}+\int_{(100\beta_k\hat{J})^c}\int_{(100I)^c}\right)|\R(a_{k,S})(\g)|\ud \g\\
& =:\frak{I}_{11}+\frak{I}_{12}+\frak{I}_{13}+\frak{I}_{14}.
\end{align*}
It suffices to estimate $\frak{I}_{11}$ and $\frak{I}_{12}$, as the other two terms are similar.

For $\frak{I}_{11}$, by H\"older's inequality and the $L^2(\IR_{_2})$ boundedness of Riesz operator $\R$, we have
\begin{align*}
    \frak{I}_{11}&\leq \int_{(100\beta_k\hat{I})^c}|100J|^{\frac{1}{2}}\left(\int_{100J}|\nabla_{g_1}\L_1^{-1/2}(a_{k,S})(g_1,g_2)|^2\,{\ud g_2}\right)^{\frac{1}{2}}\,\ud g_1\\
    &\lesssim |J|^{1/2}\sum_{j=1}^{\infty}\int_{S_j}\left(\int_{100J}|\nabla_{g_1}\L_1^{-1/2}(a_{k,S})(g_1,g_2)|^2\,{\ud g_2}\right)^{\frac{1}{2}}\,\ud g_1\\
    &\lesssim |J|^{1/2}\sum_{j=1}^\infty \left(\int_{S_j}\int_{100J}|\nabla_{g_1}\L_1^{-1/2}(a_{k,S})(g_1,g_2)|^2\,{\ud g_2\ud g_1}\right)^{\frac{1}{2}}|S_j|^{1/2}\\
    &\lesssim |J|^{1/2}\sum_{j=1}^\infty \left(\int_{100J}\bigg(\big\|\nabla_{g_1}\L_1^{-1/2}(a_{k,S})(\cdot,g_2)\big\|_{L^2(S_j)}\bigg)^{2}\ud g_2\right)^{\frac{1}{2}}|S_j|^{1/2},
\end{align*}
where $S_j:=2^{j+1}100\beta_k\hat{I}\setminus 2^j(100\beta_k\hat{I})$.
By the definition of the Riesz transform,  we have
\begin{align*}
    \bigg\|\nabla_{g_1}\L_1^{-1/2}(a_{k,S})(\cdot,g_2)\bigg\|_{L^2(S_j)}&\approx\bigg\|\int_0^\infty t_1\nabla_{g_1}e^{-t_1^2\L_1}(a_{k,S})(\cdot,g_2)\,\cfrac{\ud t_1}{t_1}\bigg\|_{L^2(S_j)}\notag\\
    &\lesssim \int_0^{\l(I)}\bigg\| t_1\nabla_{g_1}e^{-t_1^2\L_1}(a_{k,S})(\cdot,g_2)\bigg\|_{L^2(S_j)}\,\cfrac{\ud t_1}{t_1}\notag\\
    &\qquad +\int_{\l(I)}^\infty \bigg\|t_1\nabla_{g_1}e^{-t_1^2\L_1}(a_{k,S})(\cdot,g_2)\bigg\|_{L^2(S_j)}\,\cfrac{\ud t_1}{t_1}\\
    &=:\frak{I}_{111}+\frak{I}_{112}.
\end{align*}

For $\frak{I}_{111}$, by the Davies--Gaffney estimate of $t_1\nabla_{g_1}e^{-t_1^2\L_1}$ and the support condition $\operatorname{supp}(a_{k,S}(\cdot,g_2))\subset 3I$, we deduce that 
\begin{align*}
    \frak{I}_{111}
    &\lesssim \int_0^{\l(I)}e^{-c\frac{(2^j\beta_k\l(\hat{I}))^2}{t_1^2}}\|a_{k,S}(\cdot,g_2)\|_{L^2(\IR_1)}\,\cfrac{\ud t_1}{t_1}\\
    &\lesssim \int_0^{\l(I)}\cfrac{t_1^{Q_1+2}}{(2^j\beta_k\l(\hat{I}))^{Q_1+2}}\,\cfrac{\ud t_1}{t_1}\|a_{k,S}(\cdot,g_2)\|_{L^2(\IR_1)}\\
    &\lesssim {2^j}^{(-Q_1-2)}\beta_k^{-Q_1-2}\gamma_1(S)^{-Q_1-2}\|a_{k,S}(\cdot,g_2)\|_{L^2(\IR_1)}.
\end{align*}
Next, let us handle the term $\frak{I}_{112}$.  For any $M>0$, we define 
$$D_{k,S}(\g):=\l(I)^{-2M}\l(J)^{-2M}\bigg((\l(I)^2\L_1)^0\otimes (\l(J)^2\L_2)^M(b_{k,S})\bigg)(\g).$$
Applying Theorem \ref{lemma L log L atom} with $M=Q_1+2$ and using the Davies--Gaffney estimate, we have
\begin{align*}
    \frak{I}_{112}
    &\lesssim \int_{\l(I)}^\infty \left\|\left(t_1\nabla_{g_1}e^{-\frac{t_1^2\L_1}{2}}\right)\left(t_1^2\L_1\right)^Me^{-\frac{t_1^2\L_1}{2}}\left(\frac{\l(I)}{t_1}\right)^{2M}D_{k,S}(\cdot,g_2) \right\|_{L^2(S_j)}\,\frac{\ud t_1}{t_1}\\
    &\qquad\lesssim \int_{\l(I)}^\infty e^{-c\frac{(2^j\beta_k\l(\hat{I}))^2}{t_1^2}}\left\|D_{k,S}(\cdot,g_2)\right\|_{L^2(\IR_1)}\l(I)^{2M}t_1^{-2M-1}\,\ud t_1\\
    &\qquad\lesssim \int_{\l(I)}^\infty 2^{-2Nj}\beta_k^{-2N}\l(\hat{I})^{-2N}t_1^{2N-2M-1}\l(I)^{2M}\left\|D_{k,S}(\cdot,g_2)\right\|_{L^2(\IR_1)}\,\ud t_1\\
    &\qquad\approx \gamma_1(S)^{-Q_1-2}2^{-(Q_1+2)j}\beta_k^{-Q_1-2}\left\|D_{k,S}(\cdot,g_2)\right\|_{L^2(\IR_1)},
\end{align*}
where in the third inequality we have chosen $N=\frac{Q_1+2}{2}$. Consequently, substituting these estimates back into $\frak{I}_{11}$, we conclude that
\begin{align*}
   \frak{I}_{11}
   &\lesssim |J|^{1/2}\sum_{j=1}^\infty \gamma_1(S)^{-Q_1-2}\beta_k^{-Q_1-2}2^{(-Q_1-2)j}\left(\|a_{k,S}\|_{L^2(\IT)}+\|D_{k,S}\|_{L^2(\IT)}\right)|S_j|^{1/2}\\
   &\lesssim |S|^{1/2}\gamma_1(S)^{-\frac{Q_1}{2}-2}\beta_k^{-\frac{Q_1}{2}-2}\bigg(\|a_{k,S}\|_{L^2(\IT)}+\|D_{k,S}\|_{L^2(\IT)}\bigg).
\end{align*}

\noindent where the last inequality follows from the estimate 
\[
|S_j|^{1/2}\lesssim  2^{\frac{Q_1j}{2}}\beta_k^{\frac{Q_1}{2}}\gamma_1(S)^{\frac{Q_1}{2}}|I|^{1/2}.
\]

\smallskip

Now we turn to the term $\frak{I}_{12}$. For $g_1\notin 100\beta_k\hat{I}$, $g_2\notin 100J$, we apply H\"older's inequality over the annular regions to obtain
\begin{align*}
    \frak{I}_{12}&\leq \int_{(100\beta_k\hat{I})^c}\int_{(100J)^c}|\R(a_{k,S})(\g)|\,\rm{d}\g\\
    &\lesssim \sum_{j=1}^\infty\sum_{l=1}^{\infty}\int_{S_j}\int_{U_l}|\R(a_{k,S})(\g)|\,{\rm d\g}\\
    &\lesssim \sum_{j=1}^\infty\sum_{l=1}^{\infty}\left(\int_{S_j}\int_{U_l}|\R(a_{k,S})(\g)|^2\,{\rm d\g}\right)^{1/2}|S_j|^{1/2}|U_l|^{1/2},
\end{align*}
where $U_l:=2^{l+1}(100J)\setminus 2^l(100J)$.
Applying Minkowski's inequality, we can dominate the $L^{2}(S_j\times U_l)$ norm by splitting the time integration 
\begin{align*}
    &\left(\int_{S_j}\int_{U_l}\left|\nabla_{g_1}\L_1^{-1/2}\otimes\nabla_{g_2}\L_2^{-1/2}(a_{k,S})(\g)\right|^2\,{\rm d\g}\right)^{1/2} \\
    &\qquad\lesssim\left(\int_{0}^{\l(I)}\int_0^{\l(J)}+\int^\infty_{\l(I)}\int_0^{\l(J)}+\int_0^{\l(I)}\int_{\l(J)}^\infty+\int_{\l(I)}^\infty\int_{\l(J)}^\infty\right)  \\
    &\qquad\qquad\qquad\left\|t_1\nabla_{g_1}e^{-t_1^2\L_1}\otimes t_2\nabla_{g_2}e^{-t_2^2\L_2}(a_{k,S})\right\|_{L^2(S_j\times U_l)}\,\cfrac{{\ud t_1\ud t_2}}{t_1t_2} \\
    &\qquad=:\frak{I}_{121}+\frak{I}_{122}+\frak{I}_{123}+\frak{I}_{124}.
\end{align*}

We only  provide detailed  estimates for $\frak{I}_{121}$ and $\frak{I}_{123}$, since the other two terms are similar.

For $\frak{I}_{123}$, by Theorem \ref{lemma L log L atom}, the Davies--Gaffney estimates for $t_i\nabla e^{-t_i^2\L_i}\,(i=1,2)$, and the support condition of $a_{k,S}$, we have
\begin{align*}
    &\frak{I}_{123}\\
    &\lesssim \int_0^{\l(I)}\int_{\l(J)}^\infty e^{-c(\frac{2^{j}\beta_k \l(\hat{I})}{t_1})^2}e^{-c(\frac{2^{l}\l(J)}{t_2})^2}\|\l(I)^{-2M}\l(J)^{-2M}b_{k,S}(\g)\|_{L^2(\IT)}\left(\cfrac{\l(I)\l(J)}{t_1t_2}\right)^{2M}\cfrac{{\ud t_2\ud t_1}}{t_1t_2}\\
        &\lesssim \int_0^{\l(I)}\int_{\l(J)}^\infty t_1^{2m-2M-1}t_2^{2n-2M-1}{\ud t_2\ud t_1}\|\l(I)^{-2M}\l(J)^{-2M}b_{k,S}\|_{L^2(\IT)}\\
    &\hskip3cm \times 2^{-2mj}2^{-2nl}\beta_k^{-2m}l(\hat{I})^{-2m}\l(J)^{-2n}\l(I)^{2M}\l(J)^{2M}\\
    &\lesssim 2^{-2mj}\beta_k^{-2m}2^{-2nl}\gamma_1(S)^{-2m}\|\l(I)^{-2M}\l(J)^{-2M}b_{k,S}\|_{L^2(\IT)},
\end{align*}
where $m>M$ and $n<M$ are to be chosen later.

For $\frak{I}_{121}$, the Davies--Gaffney estimate for $t_i\nabla e^{-t_i^2\L_i},\,i=1,2$ and the support condition of $a_{k,S}$  yield
\begin{align*}
    \frak{I}_{121}&\lesssim \int_0^{\l(I)}\int_0^{\l(J)}e^{-\frac{(2^j\beta_kl(\hat{I}))^2}{t_1^2}}e^{-c\frac{2^{2l}\l(J)^2}{t_2^2}}\|a_{k,S}\|_{L^2(\IT)}\,\cfrac{{\ud t_2\ud t_1}}{t_2t_1}\\
    &\lesssim 2^{-2mj}2^{-2nl}\beta_k^{-2m}\gamma_1(S)^{-2m}\|a_{k,S}\|_{L^2(\IT)},
\end{align*}
for any $m,n>0$.

Since $(|S_j||U_l|)^{1/2}\lesssim 2^{\frac{Q_1j}{2}}2^{\frac{Q_2l}{2}}\gamma_1(S)^{\frac{Q_1}{2}}\beta_k^{\frac{Q_1}{2}}|S|^{1/2}$, we deduce that
\begin{align*}
&\sum_{j=1}^\infty\sum_{l=1}^\infty(\frak{I}_{121}+\frak{I}_{123})|S_j|^{1/2}|U_l|^{1/2}\\
    &\lesssim \sum_{j=1}^\infty\sum_{l=1}^\infty2^{(-2m+\frac{Q_1}{2})j}2^{(-2n+\frac{Q_2}{2})l}\beta_k^{-2m+\frac{Q_1}{2}}\gamma_1(S)^{-2m+\frac{Q_1}{2}}|S|^{1/2}\\
    &\quad\quad\qquad \times\left(\|a_{k,S}\|_{L^2(\IT)}+\|(\l(I)\l(J))^{-2M}b_{k,S}\|_{L^2(\IT)}\right)\\
    &\lesssim|S|^{1/2}\beta_k^{-2m+\frac{Q_1}{2}} \gamma_1(S)^{-2m+\frac{Q_1}{2}}\left(\|a_{k,S}\|_{L^2(\IT)}+\|(\l(I)\l(J))^{-2M}b_{k,S}\|_{L^2(\IT)}\right)
\end{align*}
which  holds as long as $m>{Q_1}/{4}$ and $n>{Q_2}/{4}$. In fact, we can take $m>M>n>\max\{{Q_1}/{4},{Q_2}/{4}\}$, which is always achievable.



By an argument similar to that of \eqref{ee-6-sum}, we deduce \eqref{claim s.2}, and thus complete the proof.

\medskip
\section{Remarks on the Fefferman--Stein type inequality in Euclidean spaces}

\setcounter{equation}{0}

We first record the Euclidean consequence of Theorem \ref{single fefferman-Stein}.
Since \(\mathbb R^n\) is a special case of a stratified Lie group, our main
result applies to Schr\"odinger operators \(L=-\Delta+V\) on \(\mathbb R^n\),
where \(V\geq 0\) and \(V\in L^1_{loc}(\mathbb R^n)\). In this special case,
one can also recover the Fefferman--Stein inequality from the argument of
Merryfield, as used in \cite{merryfield,songJAM2011}. We include the details
below in order to explain why this Euclidean proof does not extend to general
stratified Lie groups.

Let \(\varphi\in C^1_0(\mathbb R^n)\) be nonnegative, radial and nonincreasing.
Assume that
\[
        \operatorname{supp}\varphi\subset B(0,1),
        \qquad
        \int_{\mathbb R^n}\varphi(x)\ud x=1 .
\]
For \(t>0\), write \(\varphi_t(x)=t^{-n}\varphi(x/t)\). Set
\[
        u(x,t)=e^{-t\sqrt L}f(x),
\]
and write
\[
        \widetilde\nabla u=(\partial_tu,\nabla_xu),
        \qquad
        |\widetilde\nabla u|^2=|\partial_tu|^2+|\nabla_xu|^2 .
\]

\begin{lemma}[{\cite[Lemma 2.2]{songJAM2011}}]\label{le7.1}
For all \(f,g\in L^2(\mathbb R^n)\), one has
\begin{align}
 \iint_{\mathbb R^{n+1}_+}
        \big|t\widetilde\nabla u(x,t)\big|^2
        \big|\varphi_t*g(x)\big|^2
        \,\frac{\ud x\ud t}{t}
 &\leq
        \int_{\mathbb R^n}|f(x)|^2|g(x)|^2\,\ud x        \nonumber\\
 &\quad+
        \iint_{\mathbb R^{n+1}_+}
        |u(x,t)|^2|\psi_t*g(x)|^2
        \,\frac{\ud x\ud t}{t},                         \label{e2.5}
\end{align}
where \(\psi\) is a vector-valued function supported in \(B(0,1)\) and
\(\int_{\mathbb R^n}\psi(x)\,\ud x=0\).
\end{lemma}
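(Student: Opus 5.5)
We prove Lemma \ref{le7.1} by Merryfield's integration-by-parts argument, using the Euclidean identity
\[
\partial_t\varphi_t=-\operatorname{div}_x\big(\tilde\psi_t\big),
\qquad
\tilde\psi(x)=x\,\varphi(x)/|x|\cdot(\text{radial factor}),
\]
which holds because $\varphi$ is radial. More precisely, there is a vector field $\rho$ supported in $B(0,1)$ with
\[
\frac{\partial}{\partial t}\varphi_t=t^{-1}\operatorname{div}_x(\rho)_t ,
\]
and $\int\rho=0$ by symmetry.

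\textbf{Pointwise inequality.} Set $G(x,t)=\varphi_t*g(x)$. By Lemma \ref{The Laplace of u^2} (read in $\mathbb R^n$) and $V\ge0$, we have $|\widetilde\nabla u|^2\le\frac12\widetilde\Delta(u^2)$ in the weak sense. Hence
\[
\iint|\widetilde\nabla u|^2|G|^2\,t\,\ud x\ud t\le\frac12\iint\widetilde\Delta(u^2)\,|G|^2\,t\,\ud x\ud t .
\]
The integral is first truncated with cutoffs $\psi(x/R)$ and $t\in[a,b]$, exactly as in the proof of Theorem \ref{single fefferman-Stein}, so that all integrations by parts are justified. We then integrate by parts. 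In $x$, the $\Delta_x$ falls on $|G|^2 t$ and gives a cross term $-\iint\nabla_x(u^2)\cdot\nabla_x|G|^2\,t$. In $t$, integrating $\partial_{tt}(u^2)\,t|G|^2$ twice produces the following:
\begin{itemize}
\item the boundary term at $t=0$, namely $\int|f|^2|g|^2$, using $t\partial_t(u^2)\to0$ and $G\to g$;
\item boundary terms at $t=\infty$, which vanish;
\item cross terms involving $\partial_t(u^2)\,\partial_t|G|^2\,t$ and $u^2\,\partial_t|G|^2$.
\end{itemize}

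\textbf{Controlling the cross terms.} Each cross term has the form $u\,\widetilde\nabla u\cdot G\,(t\widetilde\nabla G)$ or $u^2\,G\,\partial_tG$. Two facts are used. First, $t\nabla_xG=(\nabla\varphi)_t*g$. Second, $t\partial_tG=(\operatorname{div}\rho)_t*g$, written through the identity above. Both are of the form $\psi_t*g$ with $\psi$ supported in $B(0,1)$ and $\int\psi=0$, the latter since each is a derivative of a compactly supported function. By Cauchy--Schwarz with a small parameter,
\[
|u\,\widetilde\nabla u\,G\,\psi_t*g|\,t\le\varepsilon|t\widetilde\nabla u|^2|G|^2\,t^{-1}+C_\varepsilon|u|^2|\psi_t*g|^2\,t^{-1}.
\]
The first piece is absorbed into the left-hand side. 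The term $u^2G\,\partial_tG$ needs more care, because it carries no $\widetilde\nabla u$. We rewrite $\partial_tG$ via the divergence identity and integrate by parts in $x$ once more, which moves a derivative onto $u^2G$. This yields terms $u\nabla_xu\,G\,\rho_t*g$ and $u^2\,(\rho_t*g)\cdot\nabla_xG$. The first is handled as above. The second is $t^{-1}|u|^2$ times a product of two mean-zero convolutions, which is bounded by $|u|^2|\psi_t*g|^2/t$ after Cauchy--Schwarz. Finally, we collect all the mean-zero kernels into a single vector-valued $\psi$ by concatenating components.

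\textbf{Main obstacle.} The delicate step is this last term $u^2G\,\partial_tG$. The identity $\partial_t\varphi_t=-\operatorname{div}(\cdot)$ is exactly what converts it into square-function-type quantities, and making sure no term of size $|u|^2|G|^2/t$ without cancellation survives is the crux. The absorption step also requires the left-hand side to be finite; this holds after truncation since $f,g\in L^2$, and we then pass to the limit $R\to\infty$, $a\to0$, $b\to\infty$ as before.
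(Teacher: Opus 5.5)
Your proposal is correct and follows the same route the paper relies on. The paper does not reprove this lemma but cites Song--Tan--Yan, and its closing remark names exactly your key ingredient: the identity $\partial_t(\varphi_t*g)=-\sum_j\partial_{x_j}\big((\rho_j)_t*g\big)$ with $\rho_j(x)=x_j\varphi(x)$, combined with $|\widetilde\nabla u|^2\le\frac12\widetilde\Delta(u^2)$ (from $V\ge0$), integration by parts, and absorption of the cross terms into mean-zero square functions.

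Two small slips to fix:
\begin{itemize}
\item The identity holds for every $\varphi$ by scaling; radiality is needed only for $\int x\varphi(x)\,\ud x=0$.
\item Your Cauchy--Schwarz display should not carry the factor $t$ on the left. The cross terms $u\,\widetilde\nabla u\,G\,(t\widetilde\nabla G)$ are integrated against $\ud x\,\ud t$, and that is what makes the split $t^{1/2}\cdot t^{-1/2}$ produce the two weights you want.
\end{itemize}
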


We now obtain the Euclidean form of the Fefferman--Stein inequality.

\begin{theorem}\label{Rn fSineqality}
There exist constants \(C>0\) and \(\beta>1\) such that, for every
\(f\in C^\infty_0(\mathbb R^n)\) and every \(\lambda>0\),
\begin{equation}\label{thm7-FS}
\begin{aligned}
\left|\{x\in\mathbb R^n:\,S_{P,L}(f)(x)>\lambda\}\right|
&\leq
C\left|\{x\in\mathbb R^n:\,N^\beta_L(f)(x)>\lambda\}\right|        \\
&\quad+
\frac{C}{\lambda^2}
\int_{\{x:\,N^\beta_L(f)(x)\leq \lambda\}}
        |N^\beta_L(f)(x)|^2\ud x .
\end{aligned}
\end{equation}
\end{theorem}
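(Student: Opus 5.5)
The approach is the Merryfield-type argument via Lemma \ref{le7.1}, with $g$ chosen as the indicator of the good set; the set decomposition is the one from Section \ref{section 3.2}. Fix $\lambda>0$ and define
\[
E_\lambda=\{N^\beta_L f\le\lambda\},\qquad A_\lambda=\Big\{\mathcal M(\chi_{E_\lambda^c})\le \tfrac{1}{10}\Big\}.
\]
Then Chebyshev's inequality and the $L^2$ bound for $\mathcal M$ reduce \eqref{thm7-FS} to the estimate
\[
\int_{A_\lambda}|S_{P,L}f|^2\,\ud x\lesssim \int_{E_\lambda}|N^\beta_L f|^2\,\ud x+\lambda^2|E_\lambda^c| .
\]
By Tonelli, the left side is bounded by $\iint_W |t\widetilde\nabla u|^2\,\frac{\ud x\ud t}{t}$, where $W=\bigcup_{x\in A_\lambda}\Gamma(x)$. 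We also bound $|t\sqrt L e^{-t\sqrt L}f|=|t\partial_t u|\le|t\widetilde\nabla u|$.

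Next apply Lemma \ref{le7.1} with $g=\chi_{E_\lambda}$, taking $\varphi$ as in the lemma but normalized so that $\varphi\gtrsim t^{-n}$ on $B(0,t/2)$. We need a lower bound for $\varphi_t*\chi_{E_\lambda}(x)$ on $W$. For $(x,t)\in W$ there is $z\in A_\lambda$ with $|x-z|<t$, so $|E_\lambda^c\cap B(x,t)|\le \frac{1}{10}|B(z,2t)|$. To make the averaging scales match, I would work with cones of half aperture, or enlarge the support of $\varphi$ and use a smaller threshold in $A_\lambda$. Either way we get $\varphi_t*\chi_{E_\lambda}(x)\ge c>0$ on $W$, and hence
\[
\iint_W|t\widetilde\nabla u|^2\,\tfrac{\ud x\ud t}{t}\lesssim \iint |t\widetilde\nabla u|^2|\varphi_t*\chi_{E_\lambda}|^2\,\tfrac{\ud x\ud t}{t}.
\]

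Lemma \ref{le7.1} now bounds this by two terms. The first is $\int|f|^2\chi_{E_\lambda}$, which is at most $\int_{E_\lambda}|N_L^\beta f|^2$ because $|f|\le N_L^\beta f$ almost everywhere, since $u(\cdot,t)\to f$. The second term is $\iint|u|^2|\psi_t*\chi_{E_\lambda}|^2\,\frac{\ud x\ud t}{t}$, and this is the main step. Since $\operatorname{supp}\psi\subset B(0,1)$, $\psi_t*\chi_{E_\lambda}(x)\neq0$ forces $B(x,t)\cap E_\lambda\ne\emptyset$, i.e.\ $(x,t)\in\Gamma^\beta(y)$ for some $y\in E_\lambda$ once $\beta\ge1$. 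Then $|u(x,t)|\le N^\beta_L f(y)\le\lambda$. Using $\int\psi=0$ we write $\psi_t*\chi_{E_\lambda}=-\psi_t*\chi_{E_\lambda^c}$. The second term is therefore at most
\[
\lambda^2\iint|\psi_t*\chi_{E_\lambda^c}|^2\,\tfrac{\ud x\ud t}{t}\lesssim\lambda^2\|\chi_{E_\lambda^c}\|_2^2=\lambda^2|E_\lambda^c|,
\]
by Plancherel, since $\int_0^\infty|\hat\psi(t\xi)|^2\frac{\ud t}{t}$ is finite: $\hat\psi(0)=0$ and $\hat\psi$ is Lipschitz and bounded with enough decay (we can take $\psi$ smooth).

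The point I expect to need care is the lower bound on $W$, i.e.\ matching the cone aperture with the support of $\varphi$ and the threshold in $A_\lambda$; any mismatch only changes constants. One should also check that $f\in C_0^\infty$ gives the $L^2$ hypotheses of Lemma \ref{le7.1}. Since $\chi_{E_\lambda}\notin L^2$ in general, I would apply the lemma to $g=\chi_{E_\lambda}\chi_{B(0,R)}$ and let $R\to\infty$ by monotone convergence. For the $\psi$-term, the truncation creates extra boundary contributions $\psi_t*\chi_{B(0,R)^c\cap E_\lambda}$. These are supported where $t\gtrsim \operatorname{dist}(x,B(0,R)^c)$, and their contribution vanishes as $R\to\infty$ because $u\in L^2$ with $\|u(\cdot,t)\|_2\le\|f\|_2$ and $|u|\lesssim t^{-n}$ for large $t$. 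Combining the three pieces gives \eqref{thm7-FS}.
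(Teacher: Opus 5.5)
Your proposal is correct and follows essentially the same route as the paper: Chebyshev plus the $L^2$ bound for $\mathcal M$ to reduce to $W$; a lower bound $\varphi_t*\chi_{E_\lambda}\ge c$ on $W$ obtained from a small threshold in $A_\lambda$ (the paper takes $1/(10C_0)$ and writes $\varphi_t*\chi_{E_\lambda}=1-\varphi_t*\chi_{E_\lambda^c}$); then Lemma \ref{le7.1} with $g=\chi_{E_\lambda}$, the cone condition giving $|u|\le\lambda$ on the support of $\psi_t*\chi_{E_\lambda}$, and $\int\psi=0$ together with the $L^2$ Littlewood--Paley bound. Your truncation argument goes beyond the paper, which omits this step, but one point needs correcting: in the region $|x|\gtrsim R$ with small $t$, $\|u(\cdot,t)\|_2\le\|f\|_2$ and large-$t$ decay do not suffice to kill the boundary terms; you need the spatial decay $|u(x,t)|\lesssim t(t+|x|)^{-n-1}\|f\|_1$ from the Poisson kernel bound.
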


\begin{proof}
It is enough to prove the estimate with \(S_{P,L}\) replaced by the full
gradient area function
\[
        S_{\widetilde\nabla,L}(f)(x)
        =
        \left(
        \iint_{\Gamma(x)}
        |t\widetilde\nabla e^{-t\sqrt L}f(y)|^2
        \,\frac{\ud y\ud t}{t^{n+1}}
        \right)^{1/2},
\]
because
\[
        t\sqrt L e^{-t\sqrt L}f=-t\partial_t e^{-t\sqrt L}f .
\]

Fix \(\lambda>0\). Let
\[
        E_\lambda
        =
        \{x\in\mathbb R^n:\,N^\beta_L(f)(x)\leq \lambda\},
        \qquad
        E_\lambda^c
        =
        \{x\in\mathbb R^n:\,N^\beta_L(f)(x)>\lambda\}.
\]
Let
\[
        A_\lambda
        =
        \left\{
        x\in\mathbb R^n:\,
        \mathcal M(\chi_{E_\lambda^c})(x)
        \leq {1\over 10C_0}
        \right\},
\]
where \(C_0\) is chosen large enough so that the standard maximal estimate
used in \eqref{poisson controlled by H--L} also controls convolution with
\(\varphi_t\). Finally set
\[
        W=\bigcup_{y\in A_\lambda}\Gamma(y).
\]
By Chebyshev's inequality and the \(L^2\)-boundedness of the Hardy--Littlewood
maximal operator,
\[
\begin{aligned}
\left|\{x:\,S_{\widetilde\nabla,L}(f)(x)>\lambda\}\right|
&\leq
|A_\lambda^c|
+
{1\over \lambda^2}
\int_{A_\lambda}|S_{\widetilde\nabla,L}(f)(x)|^2\ud x                  
\lesssim
|E_\lambda^c|
+
{1\over \lambda^2}
\int_{A_\lambda}|S_{\widetilde\nabla,L}(f)(x)|^2\ud x .
\end{aligned}
\]
Thus it remains to prove
\begin{equation}\label{e-7-key inequality}
\int_W|\widetilde\nabla u(x,t)|^2t\ud x\ud t
\lesssim
\int_{E_\lambda}|N^\beta_L(f)(x)|^2\ud x
+
\lambda^2|E_\lambda^c|.
\end{equation}
Indeed, by Tonelli's theorem,
\[
        \int_{A_\lambda}|S_{\widetilde\nabla,L}(f)(x)|^2\ud x
        \lesssim
        \int_W|\widetilde\nabla u(x,t)|^2t\ud x\ud t .
\]

We now prove \eqref{e-7-key inequality}. If \((x,t)\in W\), then there exists
\(z\in A_\lambda\) such that \(|x-z|<t\). Since \(\operatorname{supp}\varphi_t
\subset B(x,t)\), we have
\[
        |\varphi_t*\chi_{E_\lambda^c}(x)|
        \leq C_0\mathcal M(\chi_{E_\lambda^c})(z)
        \leq {1\over 10}.
\]
Using \(\int\varphi(x)\,\ud x=1\), this gives
\[
        \varphi_t*\chi_{E_\lambda}(x)
        =
        1-\varphi_t*\chi_{E_\lambda^c}(x)
        \geq {9\over 10}.
\]
Therefore
\[
\int_W|\widetilde\nabla u(x,t)|^2t\ud x\ud t
\lesssim
\iint_{\mathbb R^{n+1}_+}
        |t\widetilde\nabla u(x,t)|^2
        |\varphi_t*\chi_{E_\lambda}(x)|^2
        \,\frac{\ud x\ud t}{t}.
\]
Strictly speaking, if \(E_\lambda\) has infinite measure, one first applies
Lemma \ref{le7.1} to \(\chi_{E_\lambda}\eta_R\), where \(\eta_R\) is a smooth
cutoff which tends to \(1\), and then lets \(R\to\infty\). We omit this standard
approximation below.

By Lemma \ref{le7.1}, with \(g=\chi_{E_\lambda}\), we obtain
\begin{align*}
\int_W|\widetilde\nabla u(x,t)|^2t\ud x\ud t
&\lesssim
\int_{\mathbb R^n}|f(x)|^2\chi_{E_\lambda}(x)\ud x        +
\iint_{\mathbb R^{n+1}_+}
        |u(x,t)|^2|\psi_t*\chi_{E_\lambda}(x)|^2
        \,\frac{\ud x\ud t}{t}.
\end{align*}
Since \(e^{-t\sqrt L}f\to f\) at the boundary and since the nontangential
maximal function dominates the boundary trace, we have
\[
        |f(x)|\leq N^\beta_L(f)(x)
        \quad\text{for a.e. }x\in\mathbb R^n .
\]
Hence
\[
        \int_{\mathbb R^n}|f(x)|^2\chi_{E_\lambda}(x)\ud x
        \leq
        \int_{E_\lambda}|N^\beta_L(f)(x)|^2\ud x .
\]

It remains to estimate the second term. If
\(\psi_t*\chi_{E_\lambda}(x)\neq 0\), then, since \(\operatorname{supp}\psi
\subset B(0,1)\), there exists \(z_0\in B(x,t)\cap E_\lambda\). Taking
\(\beta>1\), we have \((x,t)\in\Gamma^\beta(z_0)\). Therefore
\[
        |u(x,t)|
        =
        |e^{-t\sqrt L}f(x)|
        \leq
        N^\beta_L(f)(z_0)
        \leq \lambda .
\]
It follows that
\begin{align*}
&\iint_{\mathbb R^{n+1}_+}
        |u(x,t)|^2|\psi_t*\chi_{E_\lambda}(x)|^2
        \,\frac{\ud x\ud t}{t}                                      \lesssim
        \lambda^2
        \iint_{\mathbb R^{n+1}_+}
        |\psi_t*\chi_{E_\lambda}(x)|^2
        \,\frac{\ud x\ud t}{t}.
\end{align*}
Since \(\int\psi(x)\ud x=0\), we have
\[
        \psi_t*\chi_{E_\lambda}
        =
        -\psi_t*\chi_{E_\lambda^c}.
\]
Thus the classical \(L^2\) Littlewood--Paley estimate gives
\[
\begin{aligned}
\iint_{\mathbb R^{n+1}_+}
        |\psi_t*\chi_{E_\lambda}(x)|^2
        \,\frac{\ud x\ud t}{t}
&=
\iint_{\mathbb R^{n+1}_+}
        |\psi_t*\chi_{E_\lambda^c}(x)|^2
        \,\frac{\ud x\ud t}{t}       
\lesssim
        \|\chi_{E_\lambda^c}\|_{L^2(\mathbb R^n)}^2
        =
        |E_\lambda^c|.
\end{aligned}
\]
Combining the last estimates proves \eqref{e-7-key inequality}, and hence
\eqref{thm7-FS}.
\end{proof}

\begin{remark}
The proof above uses a Euclidean feature which has no direct analogue on a
general stratified Lie group. The key point in Lemma \ref{le7.1} is the
construction of a compactly supported bump function \(\varphi\) for which
\[
        \partial_t(\varphi_t*f)
        =
        -\sum_{j=1}^n
        \partial_{x_j}\big((\rho_j)_t*f\big),
        \qquad
        \rho_j(x)=x_j\varphi(x).
\]
This identity is based on ordinary Euclidean convolution, Euclidean dilations
and the usual partial derivatives. It allows one to integrate by parts and
replace derivatives of the averaging function by a square function with
mean-zero kernels.

For a general stratified Lie group, the horizontal vector fields are
noncommutative and the natural dilations are anisotropic. There is no direct
replacement of the above compactly supported Euclidean identity with the same
support and cancellation properties. Moreover, when \(V\not\equiv 0\), the
Poisson semigroup associated with \(L=-\Delta+V\) is not a convolution
semigroup. Therefore the Merryfield argument does not give the desired
Fefferman--Stein inequality in the stratified Lie group setting. This is why
the proof of Theorem \ref{single fefferman-Stein} uses a different cutoff, built from
the sub-Laplacian Poisson semigroup \(e^{-t\sqrt{-\Delta}}\chi_{E_\lambda}\),
together with the truncation and weak derivative argument developed in
Section \ref{section 3.2}.
\end{remark}

We also point out that Theorems \ref{prod F-S thm}, \ref{lemma L log L atom} and \ref{FS-Product space} are new even in the Euclidean spaces.

\bigskip
\bigskip

{\bf Acknowledgments.}   The research is supported by National Key R$\&$D Program of China 2022YFA1005700.  J. Li is  supported by ARC DP 260100485. L. Song  is supported by NNSF of China (No. 12471097).  L. Yan is supported by NNSF of China (No. 12571111). 

\vskip 1cm

\bibliographystyle{plain}
		\bibliography{ref.bib} 

@article {AT,
    AUTHOR = {Auscher, P. and Tchamitchian, P. },
     TITLE = {Square root problem for divergence operators and related
              topics},
   JOURNAL = {Ast\'erisque},
  FJOURNAL = {Ast\'erisque},
    NUMBER = {249},
      YEAR = {1998},
     PAGES = {viii+172},
      ISSN = {0303-1179,2492-5926},
   MRCLASS = {47F05 (35A25 35J99 47G30)},
  MRNUMBER = {1651262},
MRREVIEWER = {Xuan\ Thinh\ Duong},
}

@article {CF1980,
    AUTHOR = {Chang, S-Y.A. and Fefferman, R.},
     TITLE = {A continuous version of duality of {$H\sp{1}$}\ with {BMO} on
              the bidisc},
   JOURNAL = {Ann. of Math. (2)},
  FJOURNAL = {Annals of Mathematics. Second Series},
    VOLUME = {112},
      YEAR = {1980},
    NUMBER = {1},
     PAGES = {179--201},
      ISSN = {0003-486X},
   MRCLASS = {32A35 (42B30)},
  MRNUMBER = {584078},
MRREVIEWER = {A.\ B.\ Aleksandrov},
}

@article {CF1982,
    AUTHOR = {Chang, S-Y.A. and Fefferman, R.},
     TITLE = {The {C}alder\'on-{Z}ygmund decomposition on product domains},
   JOURNAL = {Amer. J. Math.},
  FJOURNAL = {American Journal of Mathematics},
    VOLUME = {104},
      YEAR = {1982},
    NUMBER = {3},
     PAGES = {455--468},
      ISSN = {0002-9327,1080-6377},
   MRCLASS = {42B30 (32A35)},
  MRNUMBER = {658542},
MRREVIEWER = {Douglas\ Kurtz},
       }

@article {CD,
    AUTHOR = {Coulhon, T. and Duong, X.T.},
     TITLE = {Riesz transforms for {$p>2$}},
   JOURNAL = {C. R. Acad. Sci. Paris S\'er. I Math.},
  FJOURNAL = {Comptes Rendus de l'Acad\'emie des Sciences. S\'erie I.
              Math\'ematique},
    VOLUME = {332},
      YEAR = {2001},
    NUMBER = {11},
     PAGES = {975--980},
      ISSN = {0764-4442},
   MRCLASS = {58J35 (42B25 47D06 47F05)},
  MRNUMBER = {1838122},
MRREVIEWER = {Emmanuel\ Russ},
       }

@book {FoSt,
    AUTHOR = {Folland, G.B. and Stein, E.M.},
     TITLE = {Hardy spaces on homogeneous groups},
    SERIES = {Mathematical Notes},
    VOLUME = {28},
 PUBLISHER = {Princeton University Press, Princeton, NJ; University of Tokyo
              Press, Tokyo},
      YEAR = {1982},
     PAGES = {xii+285},
      ISBN = {0-691-08310-X},
   MRCLASS = {43A85 (22E45 42B30)},
  MRNUMBER = {657581},
MRREVIEWER = {Daryl\ Geller},
}

@article {SY2010,
    AUTHOR = {Song, L. and Yan, L.X.},
     TITLE = {Riesz transforms associated to {S}chr\"odinger operators on
              weighted {H}ardy spaces},
   JOURNAL = {J. Funct. Anal.},
  FJOURNAL = {Journal of Functional Analysis},
    VOLUME = {259},
      YEAR = {2010},
    NUMBER = {6},
     PAGES = {1466--1490},
      ISSN = {0022-1236,1096-0783},
   MRCLASS = {35J10 (42B20 47D08 47F05)},
  MRNUMBER = {2659768},
MRREVIEWER = {Shijun\ Zheng},
}

@article {CS,
    AUTHOR = {Coulhon, T. and Sikora, A.},
     TITLE = {Gaussian heat kernel upper bounds via the
              {P}hragm\'en-{L}indel\"of theorem},
   JOURNAL = {Proc. Lond. Math. Soc. (3)},
  FJOURNAL = {Proceedings of the London Mathematical Society. Third Series},
    VOLUME = {96},
      YEAR = {2008},
    NUMBER = {2},
     PAGES = {507--544},
      ISSN = {0024-6115,1460-244X},
   MRCLASS = {35K05 (58J35)},
  MRNUMBER = {2396848},
      }

@article {DOY,
    AUTHOR = {Duong, X.T. and Ouhabaz, E.M. and Yan, L.X.},
     TITLE = {Endpoint estimates for {R}iesz transforms of magnetic
              {S}chr\"odinger operators},
   JOURNAL = {Ark. Mat.},
  FJOURNAL = {Arkiv f\"or Matematik},
    VOLUME = {44},
      YEAR = {2006},
    NUMBER = {2},
     PAGES = {261--275},
      ISSN = {0004-2080,1871-2487},
   MRCLASS = {35Q40 (31B10 42B20 47F05 81Q10)},
  MRNUMBER = {2292721},
MRREVIEWER = {Andrei\ B.\ Bogatyr\"ev},
       DOI = {10.1007/s11512-006-0021-x},
       URL = {https://doi.org/10.1007/s11512-006-0021-x},
}

@book {Brezis,
    AUTHOR = {Brezis, H.},
     TITLE = {Functional analysis, {S}obolev spaces and partial differential
              equations},
    SERIES = {Universitext},
 PUBLISHER = {Springer, New York},
      YEAR = {2011},
     PAGES = {xiv+599},
      ISBN = {978-0-387-70913-0},
   MRCLASS = {35-01 (46-01 46E35 46N20 47F05)},
  MRNUMBER = {2759829},
MRREVIEWER = {Vicen\c tiu\ D.\ R\u adulescu},
}

@article {Feff1986,
    AUTHOR = {Fefferman, R.},
     TITLE = {Calder\'on-{Z}ygmund theory for product domains: {$H^p$}
              spaces},
   JOURNAL = {Proc. Nat. Acad. Sci. U.S.A.},
  FJOURNAL = {Proceedings of the National Academy of Sciences of the United
              States of America},
    VOLUME = {83},
      YEAR = {1986},
    NUMBER = {4},
     PAGES = {840--843},
      ISSN = {0027-8424},
   MRCLASS = {42B30 (42B20)},
  MRNUMBER = {828217},
MRREVIEWER = {M.\ Cotlar},
      }

@article {F1987,
    AUTHOR = {Fefferman, R.},
     TITLE = {Harmonic analysis on product spaces},
   JOURNAL = {Ann. of Math. (2)},
  FJOURNAL = {Annals of Mathematics. Second Series},
    VOLUME = {126},
      YEAR = {1987},
    NUMBER = {1},
     PAGES = {109--130},
      ISSN = {0003-486X,1939-8980},
   MRCLASS = {42B20 (42B30 47B38)},
  MRNUMBER = {898053},
MRREVIEWER = {Norman\ J.\ Weiss},
      }

@article {HLCL2010,
    AUTHOR = {Han, Y.S. and Lee, M.Y. and Lin, C.C. and Lin,
              Y.C.},
     TITLE = {Calder\'on-{Z}ygmund operators on product {H}ardy spaces},
   JOURNAL = {J. Funct. Anal.},
  FJOURNAL = {Journal of Functional Analysis},
    VOLUME = {258},
      YEAR = {2010},
    NUMBER = {8},
     PAGES = {2834--2861},
      ISSN = {0022-1236,1096-0783},
   MRCLASS = {42B20 (42B25 47G10)},
  MRNUMBER = {2593346},
MRREVIEWER = {Leszek\ Skrzypczak},
       
}

@article {FS1972,
    AUTHOR = {Fefferman, C. and Stein, E.M.},
     TITLE = {{$H\sp{p}$} spaces of several variables},
   JOURNAL = {Acta Math.},
  FJOURNAL = {Acta Mathematica},
    VOLUME = {129},
      YEAR = {1972},
    NUMBER = {3-4},
     PAGES = {137--193},
      ISSN = {0001-5962,1871-2509},
   MRCLASS = {42A40 (30A78 42A18 42A92)},
  MRNUMBER = {447953},
MRREVIEWER = {Alberto\ Torchinsky},
       DOI = {10.1007/BF02392215},
       URL = {https://doi.org/10.1007/BF02392215},
}

@article {FSt1982,
    AUTHOR = {Fefferman, R. and Stein, E.M.},
     TITLE = {Singular integrals on product spaces},
   JOURNAL = {Adv. in Math.},
  FJOURNAL = {Advances in Mathematics},
    VOLUME = {45},
      YEAR = {1982},
    NUMBER = {2},
     PAGES = {117--143},
      ISSN = {0001-8708},
   MRCLASS = {42B20 (44A35)},
  MRNUMBER = {664621},
MRREVIEWER = {Akihiko\ Miyachi},
       DOI = {10.1016/S0001-8708(82)80001-7},
       URL = {https://doi.org/10.1016/S0001-8708(82)80001-7},
}

@article {H,
    AUTHOR = {H\"ormander, L.},
     TITLE = {Estimates for translation invariant operators in {$L\sp{p}$}\
              spaces},
   JOURNAL = {Acta Math.},
  FJOURNAL = {Acta Mathematica},
    VOLUME = {104},
      YEAR = {1960},
     PAGES = {93--140},
      ISSN = {0001-5962,1871-2509},
   MRCLASS = {46.00 (42.00)},
  MRNUMBER = {121655},
MRREVIEWER = {I.\ I.\ Hirschman, Jr.},
     }

@article{HLMMY,
  author = {Hofmann, S. and Lu, G.Z. and Mitrea, D. and Mitrea, M. and Yan, L.X.},
     TITLE = {Hardy spaces associated to non-negative self-adjoint operators
              satisfying {D}avies-{G}affney estimates},
   JOURNAL = {Mem. Amer. Math. Soc.},
  FJOURNAL = {Memoirs of the American Mathematical Society},
    VOLUME = {214},
      YEAR = {2011},
    NUMBER = {1007},
     PAGES = {vi+78},
}

@article {HLL2016,
    AUTHOR = {Han, Y.S. and Li, J. and Lin, C.-C.},
     TITLE = {Criterion of the {$L^2$} boundedness and sharp endpoint
              estimates for singular integral operators on product spaces of
              homogeneous type},
   JOURNAL = {Ann. Sc. Norm. Super. Pisa Cl. Sci. (5)},
  FJOURNAL = {Annali della Scuola Normale Superiore di Pisa. Classe di
              Scienze. Serie V},
    VOLUME = {16},
      YEAR = {2016},
    NUMBER = {3},
     PAGES = {845--907},
      ISSN = {0391-173X,2036-2145},
   MRCLASS = {42B20 (42B25)},
  MRNUMBER = {3618079},
MRREVIEWER = {Michael\ T.\ Lacey},
}

@article {J,
    AUTHOR = {Journ\'e, J.-L.},
     TITLE = {A covering lemma for product spaces},
   JOURNAL = {Proc. Amer. Math. Soc.},
  FJOURNAL = {Proceedings of the American Mathematical Society},
    VOLUME = {96},
      YEAR = {1986},
    NUMBER = {4},
     PAGES = {593--598},
      ISSN = {0002-9939,1088-6826},
   MRCLASS = {42B20},
  MRNUMBER = {826486},
MRREVIEWER = {Sun\ Yung A. Chang},
      }

@book {Ouhabaz2005,
    AUTHOR = {Ouhabaz, E.M.},
     TITLE = {Analysis of heat equations on domains},
    SERIES = {London Mathematical Society Monographs Series},
    VOLUME = {31},
 PUBLISHER = {Princeton University Press, Princeton, NJ},
      YEAR = {2005},
     PAGES = {xiv+284},
      ISBN = {0-691-12016-1},
   MRCLASS = {35-02 (35J70 35K20 47D06 47D07 47F05)},
  MRNUMBER = {2124040},
MRREVIEWER = {Sergey\ G.\ Pyatkov},
}

@article {Sikora2004,
    AUTHOR = {Sikora, A.},
     TITLE = {Riesz transform, {G}aussian bounds and the method of wave
              equation},
   JOURNAL = {Math. Z.},
  FJOURNAL = {Mathematische Zeitschrift},
    VOLUME = {247},
      YEAR = {2004},
    NUMBER = {3},
     PAGES = {643--662},
      ISSN = {0025-5874,1432-1823},
   MRCLASS = {58J35 (35L05 42B20)},
  MRNUMBER = {2114433},
MRREVIEWER = {Thierry\ Coulhon},
      }

@book {yo,
    AUTHOR = {Yosida, K.},
     TITLE = {Functional analysis},
    SERIES = {Classics in Mathematics},
      NOTE = {Reprint of the sixth (1980) edition},
 PUBLISHER = {Springer-Verlag, Berlin},
      YEAR = {1995},
     PAGES = {xii+501},
      ISBN = {3-540-58654-7},
   MRCLASS = {46-01 (47-01)},
  MRNUMBER = {1336382},
       DOI = {10.1007/978-3-642-61859-8},
       URL = {https://doi.org/10.1007/978-3-642-61859-8},
}

@article{merryfield,
    AUTHOR = {Merryfield, K.G.},
     TITLE = {On the area integral, {C}arleson measures and {$H^p$} in the
              polydisc},
   JOURNAL = {Indiana Univ. Math. J.},
  FJOURNAL = {Indiana University Mathematics Journal},
    VOLUME = {34},
      YEAR = {1985},
    NUMBER = {3},
     PAGES = {663--685},
      ISSN = {0022-2518,1943-5258},
   MRCLASS = {42B30 (42B25)},
  MRNUMBER = {794581},
MRREVIEWER = {Mario\ Milman},
}

@article{liji2023,
      title={On {F}efferman--{S}tein type inequality on Shilov boundaries and applications}, 
      author={Li, J.},
      year={2024},
JOURNAL = {Ann. Sc. Norm. Super. Pisa Cl. Sci. (5)}
}

@article{fefferman1986,
    AUTHOR = {Fefferman, R.},
     TITLE = {A note on a lemma of {Z}\'o},
   JOURNAL = {Proc. Amer. Math. Soc.},
  FJOURNAL = {Proceedings of the American Mathematical Society},
    VOLUME = {96},
      YEAR = {1986},
    NUMBER = {2},
     PAGES = {241--246},
}

@article{CLLP2025,
  title={An endpoint estimate for product singular integral operators on stratified Lie groups},
  author={Cowling, M.G. and Lee, M.Y. and Li, J. and Pipher, J.},
  journal={Canad. J. Math.},
  PAGES={1--32},
  year={2025},
   VOLUME = {482},
  publisher={Canadian Mathematical Society},
}

@article {songJAM2011,
    AUTHOR = {Song, L. and Tan, C.Q. and Yan, L.X.},
     TITLE = {An atomic decomposition for {H}ardy spaces associated to
              {S}chr\"odinger operators},
   JOURNAL = {J. Aust. Math. Soc.},
  FJOURNAL = {Journal of the Australian Mathematical Society},
    VOLUME = {91},
      YEAR = {2011},
    NUMBER = {1},
     PAGES = {125--144},
      ISSN = {1446-7887,1446-8107},
   MRCLASS = {42B30 (35J10 42B25 47F05)},
  MRNUMBER = {2844951},
MRREVIEWER = {Tuomas\ P.\ Hyt\"onen},
     }

@article {HKMP2015,
    AUTHOR = {Hofmann, S. and Kenig, C. and Mayboroda, S. and
              Pipher, J.},
     TITLE = {Square function/non-tangential maximal function estimates and
              the {D}irichlet problem for non-symmetric elliptic operators},
   JOURNAL = {J. Amer. Math. Soc.},
  FJOURNAL = {Journal of the American Mathematical Society},
    VOLUME = {28},
      YEAR = {2015},
    NUMBER = {2},
     PAGES = {483--529},
      ISSN = {0894-0347,1088-6834},
   MRCLASS = {35J25 (35B45 42B20 42B25 42B37)},
  MRNUMBER = {3300700},
MRREVIEWER = {Lubomira\ G.\ Softova},
  }

@incollection{CowlingFanLiyan2025,
    AUTHOR = {Cowling, M.G. and Fan, Z.J. and Li, J. and Yan, L.X.},
     TITLE = {Characterizations of product {H}ardy spaces on stratified
              groups by singular integrals and maximal functions},
 BOOKTITLE = {The mathematical heritage of {G}uido {W}eiss},
    SERIES = {Appl. Numer. Harmon. Anal.},
     pages = {193--227},
 PUBLISHER = {Birkh\"auser/Springer, Cham},
      YEAR = {2025},
      ISBN = {978-3-031-76792-0; 978-3-031-76793-7},
   MRCLASS = {42B20 (42B25 42B30 42B35 43A80)},
  MRNUMBER = {4934898},
    }

@article {JS1986,
    AUTHOR = {Jerison, D.S. and S\'anchez-Calle, A.},
     TITLE = {Estimates for the heat kernel for a sum of squares of vector
              fields},
   JOURNAL = {Indiana Univ. Math. J.},
  FJOURNAL = {Indiana University Mathematics Journal},
    VOLUME = {35},
      YEAR = {1986},
    NUMBER = {4},
     PAGES = {835--854},
      ISSN = {0022-2518,1943-5258},
   MRCLASS = {58G11 (32F25 35H05 35K05)},
  MRNUMBER = {865430},
MRREVIEWER = {David\ S.\ Tartakoff},
       DOI = {10.1512/iumj.1986.35.35043},
       URL = {https://doi.org/10.1512/iumj.1986.35.35043},
}

@book {Goldstein1985,
    AUTHOR = {Goldstein, J.A.},
     TITLE = {Semigroups of linear operators and applications},
    SERIES = {Oxford Mathematical Monographs},
 PUBLISHER = {The Clarendon Press, Oxford University Press, New York},
      YEAR = {1985},
     PAGES = {x+245},
      ISBN = {0-19-503540-2},
   MRCLASS = {47D05 (34K30 35R20)},
  MRNUMBER = {790497},
MRREVIEWER = {H.\ O.\ Fattorini},
}

@article{Estein1958,
 ISSN = {00029947, 10886850},
 URL = {http://www.jstor.org/stable/1993226},
 author = {Stein, E.M.},
 journal = {Trans. Amer. Math. Soc.
},
 number = {2},
 pages = {430--466},
 publisher = {American Mathematical Society},
 title = {On the Functions of {L}ittlewood-{P}aley, {L}usin, and {M}arcinkiewicz},
 urldate = {2025-12-07},
 volume = {88},
 year = {1958}
}

@article {hk2012,
    AUTHOR = {Hyt\"onen, T. and Kairema, A.},
     TITLE = {Systems of dyadic cubes in a doubling metric space},
   JOURNAL = {Colloq. Math.},
  FJOURNAL = {Colloquium Mathematicum},
    VOLUME = {126},
      YEAR = {2012},
    NUMBER = {1},
     PAGES = {1--33},
      ISSN = {0010-1354,1730-6302},
   MRCLASS = {42B25 (60D05)},
  MRNUMBER = {2901199},
MRREVIEWER = {Raymond\ H.\ Cox},
       DOI = {10.4064/cm126-1-1},
       URL = {https://doi.org/10.4064/cm126-1-1},
}

\end{document}